\definecolor{dblue}{rgb}{0.21,0.21,0.55}
\renewcommand{\P}{\mathbb{P}}
\newcommand{\F}{\mathcal{F}}
\newcommand{\E}{\mathbb{E}}
\newcommand{\N}{\mathbb{N}}
\newcommand{\R}{\mathbb{R}}
\newcommand{\1}{\mathbbm{1}}
\newcommand{\KLEINO}{{\scriptstyle{\mathcal{O}}}}
\DeclareMathAccent{\verywidehat}{\mathord}{largesymbols}{'144}
\newcommand{\var}{\mathbb{V}\hspace*{-0.05cm}\textnormal{a\hspace*{0.02cm}r}}
\newtheorem{hypo}{Hypothesis}
\newtheorem{alt}{Alternative}
\newtheorem{prop}{Proposition}[section]
\newtheorem{lem}{Lemma}
\newtheorem{cor}[prop]{Corollary}
\newtheorem{ass}[prop]{Assumption}
\newtheorem{theo}[prop]{Theorem}
\newtheorem{rem}[prop]{Remark}
\begin{document}
\renewcommand*{\thefootnote}{\fnsymbol{footnote}}

\title{Change-point inference on volatility in noisy It\^{o} semimartingales}
\author[1]{Markus Bibinger}
\author[2]{Mehmet Madensoy\footnote{Madensoy acknowledges financial support from the Deutsche Forschungsgemeinschaft via RTG 1953.}}
\address[1]{Faculty of Mathematics and Computer Science, Philipps-Universit\"at Marburg} 
\address[2]{School of Business Informatics and Mathematics, Mannheim University}
\normalsize
\begin{frontmatter}
\date{This version: 06.09.2017}

\vspace{-.7cm} 

\begin{abstract}
{{\normalsize \noindent
This work is concerned with tests on structural breaks in the spot volatility process of a general It\^{o} semimartingale based on discrete observations contaminated with i.i.d.\;microstructure noise. We construct a consistent test building up on infill asymptotic results for certain functionals of spectral spot volatility estimates. A weak limit theorem is established under the null hypothesis relying on extreme value theory. We prove consistency of the test and of an associated estimator for the change point. A simulation study illustrates the finite-sample performance of the method and efficiency gains compared to a skip-sampling approach.}}
\begin{keyword}
Change-point analysis\sep high-frequency data \sep market microstructure \sep volatility estimation \sep volatility jump\\[.25cm]
\MSC[2010] 62M10 \sep 62G10
\end{keyword}
\end{abstract}

\end{frontmatter}
\thispagestyle{plain}

\newcommand{\KK}{L}
\newcommand{\aalpha}{\mathfrak{a}}
\newcommand{\xiv}{\mbox{\boldmath$\xi$}}
\newcommand{\etav}{\mbox{\boldmath$\eta$}}
\newcommand{\muv}{\mbox{\boldmath$\mu$}}
\newcommand{\chiv}{\mbox{\boldmath$\chi$}}
\newcommand{\gan}{{\lfloor \alpha_{n}\rfloor}}
\newcommand{\an}{\alpha_{n}}
\newcommand{\hn}{h_{n}}
\newcommand{\ah}{\alpha_{n}h_{n}}
\newcommand{\ahll}{h_{n}(i\alpha_{n}+\left(\ell-1\right))-n^{-1}}
\newcommand{\ahlla}{h_{n}(i\alpha_{n}+\left(\ell-1\right))}
\newcommand{\ahl}{i\alpha_{n}h_{n}+\ell h_{n}}
\newcommand{\maxi}{\mathop{\mathrm{max}}\limits_{i}}
\newcommand{\mini}{\mathop{\mathrm{min}}\limits_{i}}
\newcommand{\x}{\xi^{\left(n\right)}_{ij\ell}}
\newcommand{\nh}{{\lfloor nh_{n}\rfloor}-1}
\newcommand{\Kp}{K^{+}}
\newcommand{\Km}{K^{-}}
\newcommand{\kp}{\kappa^{+}}
\newcommand{\km}{\kappa^{-}}
\newcommand{\w}{w_{ij\ell}}
\newcommand{\SX}{S_{ij\ell}\left(X\right)}
\newcommand{\SW}{S_{ij\ell}\left(W\right)}
\newcommand{\SWsq}{S^2_{ij\ell}\left(W\right)}
\newcommand{\Se}{S_{ij\ell}\left(\varepsilon\right)}
\newcommand{\Sesq}{S^2_{ij\ell}\left(\varepsilon\right)}
\newcommand{\Ri}{\overline{RV}_{n,i}}
\newcommand{\Rii}{\overline{RV}_{n,i+1}}
\newcommand{\Zi}{\overline{Z}_{n,i}}
\newcommand{\Zii}{\overline{Z}_{n,i+1}}
\newcommand{\absRi}{\left|\overline{RV}_{n,i}\right|}
\newcommand{\absRii}{\left|\overline{RV}_{n,i+1}\right|}
\newcommand{\absZi}{\left|\overline{Z}_{n,i}\right|}
\newcommand{\absZii}{\left|\overline{Z}_{n,i+1}\right|}
\newcommand{\absZiiw}{\left|\widetilde{Z}_{n,i+1}\right|}
\newcommand{\Ziiw}{\widetilde{Z}_{n,i+1}}
\newcommand{\Kmk}{K^{-}\hspace{-0.6em}-\kappa^{-}}


\section{Introduction\label{sec:1}}
Inference on structural breaks for discrete-time stochastic processes, particularly in time series analysis, is a very active research field within mathematical statistics. 
Whereas the latter is usually concerned with i.i.d.\;data, important contributions beyond that case are presented in \cite{wuzhao2007}, proving limit theorems for nonparametric change-point analysis under weak dependence. These results serve as an important ingredient for the present work. 
So far inference on structural breaks for continuous-time stochastic processes has attracted less attention. Let us mention the very recent work by \cite{buecher2017}, which also deals with questions of detecting structural breaks of certain continuous-time stochastic processes. Our target of inference is the volatility process. Understanding the structure and dynamics of stochastic volatility processes is a highly important issue in finance and econometrics. Due to the outstanding role of volatility for quantifying financial risk, there is a vast literature on these topics.\\
Motivated by fundamental results in financial mathematics, the process modeling the logarithmic price of an asset belongs to the class of semimartingales. Whereas statistics for general semimartingales is less developed, a lot of work has been done if $\left(X_{t}\right)_{t\in\left[0,1\right]}$ is an It\^{o} semimartingale, that is, a semimartingale with a characteristic triple being absolutely continuous with respect to the Lebesgue measure. An overview on existing theory is available by \cite{JP}. More precisely, our continuous-time model is
\begin{align}\label{itoprocess}
X_{t}=C_{t}+J_{t}\,,
\end{align}
with $\left(C_{t}\right)_{t\in\left[0,1\right]}$ the continuous part,
\begin{align}\label{itocont}
C_{t}=X_{0}+\int_{0}^{t}\sigma_{s}\,dW_{s}+\int_{0}^{t}a_{s}\,ds\,,
\end{align}
with a standard Brownian motion $\left(W_{t}\right)_{t\in\left[0,1\right]}$, the volatility process $\left(\sigma_{t}\right)_{t\in\left[0,1\right]}$ and the drift process $\left(a_{t}\right)_{t\in\left[0,1\right]}$. We define the pure-jump process $\left(J_{t}\right)_{t\in\left[0,1\right]}$ through the Grigelionis representation
\begin{align}\label{grigel}
J_t=\int_0^t \int_{\mathbb{R}}\delta(s,z)\1_{\{|\delta(s,z)|\leq 1 \}}(\mu-\nu)(ds,dz) +\int_0^t \int_{\mathbb{R}}\delta(s,z)\1_{\{|\delta(s,z)|> 1 \}} \mu(ds,dz)
\end{align}
with a Poisson random measure $\mu$ having a compensator of the form $\nu(ds,dz)=\lambda(dz)\otimes ds$ with a $\sigma$-finite measure $\lambda$.
 
In this paper we are going to work with discrete observations and within the framework of infill asymptotics. That is, our data is generated by discretizing a path of the continuous-time stochastic process $\left(X_{t}\right)_{t\in\left[0,1\right]}$ on a regular, equidistant grid. Though the model \eqref{itoprocess} is quite flexible, empirical evidence suggests that the recorded financial high-frequency data in applications does not follow a `true' semimartingale. Therefore, an extension of model \eqref{itoprocess} incorporating microstructure noise is necessary. Market microstructure noise is caused by various trading mechanisms as discreteness of prices and bid-ask spread bounce effects. The observed data is modeled through
\begin{align}\label{data}
Y_{i\Delta_{n}},\quad i=0,\ldots,\Delta^{-1}_{n}=n\in\mathbb{N}\,,
\end{align}
as a discretization of a continuous-time stochastic process $\left(Y_{t}\right)_{t\in\left[0,1\right]}$, given by a superposition
\begin{align}\label{smn}
Y_{t}=X_{t}+\varepsilon_{t}\,,
\end{align}
with $\left(\varepsilon_{t}\right)_{t\in\left[0,1\right]}$ being a centered white noise process modeling the microstructure noise. This prominent additive noise model has attained considerable attention in the econometrics and statistics literature, let us refer to the book by \cite{sahaliajacodbook} for an overview. Infill asymptotics implies $\Delta_{n}\rightarrow 0$ or, equivalently, $n\rightarrow +\infty$.
Whereas the drift process $\left(a_{t}\right)_{t\in\left[0,1\right]}$ is not identifiable in a high-frequency framework, also without noise, quantities as the spot volatility process $\left(\sigma^{2}_{t}\right)_{t\in\left[0,1\right]}$ and the integrated volatility process $\int_{0}^{.}\sigma^{2}_{s}\,ds$, respectively, are identifiable. Since they constitute key quantities for an econometric risk analysis, there exists a rich literature on estimation theory. We refer to \cite{JP} for a comprehensive presentation of these topics. This work is aimed to increase the understanding of the structure of the spot volatility process and to complement existing literature. The recent work by \cite{Bibinger2017} presents results on change-point detection for the model \eqref{itoprocess} without noise. We focus on a test that distinguishes continuous volatility paths from paths with volatility jumps. Inference on volatility jumps is currently of great interest in the literature, see, for instance, \cite{jacodtodorov} and \cite{voljumps}. Moreover, it provides a necessary ingredient to analyze possible discontinuous leverage effects, see \cite{DLE} for a recent approach to this question. Inference on the volatility poses a challenging statistical problem, the volatility being latent and not directly observable. In the model \eqref{smn} with microstructure noise, this becomes even more involved. The only work we are aware of addressing inference on volatility jumps in this model is by \cite{BibingerWinkel2018} who extend the test for contemporaneous price and volatility jumps by \cite{jacodtodorov} to noisy observations. Restricting to finitely many price-jump times, their results do not render general inference on volatility jumps. In this work we will extend the methods and results presented in \cite{Bibinger2017} in order to construct a general test for volatility jumps based on the model \eqref{smn}. Our statistics are functionals of spectral spot volatility estimates building up on the local Fourier method of moments in \cite{Altmeyer2015} extending the volatility estimation approach introduced by \cite{Reis2011}. While several linear estimators for functionals of the volatility have by now been generalized to noise-robust approaches, the considered change-point test is based on maximum statistics and its extension to an efficient method under noise requires new techniques.

The key theorem for the test is a limit theorem under the null hypothesis with an extreme value limit distribution of Gumbel-type. In particular, a clever rescaling of differences of local spot volatility estimates, quite different from the statistics considered in \cite{Bibinger2017}, yields an asymptotic distribution-free test. In a certain sense, our test for volatility jumps complements the prominent Gumbel-test for price jumps proposed by \cite{leemykland} and further studied by \cite{Palmes2016} and \cite{woerner}. An extension of the Gumbel-test for price jumps to noisy observations is given in \cite{leemykland2}. We prove that our Gumbel-test for volatility jumps is consistent. Similar to the price-jump test, it facilitates also detection of the jump times -- the change points. One main difficulty to prove the limit theorem is to uniformly control the spot volatility estimation errors.

The paper is organized as follows. Section 2 introduces the testing problem and the assumptions. Section 3 constructs the test. We begin with the test for a continuous semimartingale $(X_t)_{t\in[0,1]}$ which is then extended to the general case utilizing truncation techniques. Section 4 establishes the asymptotic theory including the limit theorem under the null hypothesis, consistency of the test and consistent estimation of the change point under the alternative hypothesis. In Section 5 we conduct a Monte Carlo simulation study. The main insight is that the new test considerably increases the power compared to (optimally) skip sampling the noisy data to lower frequencies and applying the not noise robust method by \cite{Bibinger2017} directly. Section 6 gathers the proofs.

\section{Testing problem and theoretical setup\label{sec:2}}
We will develop a test for volatility jumps. We aim to test for some c\`adl\`ag squared volatility process $(\sigma_t^2)_{t\in[0,1]}$ hypotheses of the form

\begin{equation}\label{hyptest}
\begin{aligned}
&H_{0}: \text{there is no jump in }\sigma^{2}_{t}\quad\text{vs.}\\
&H_{1}: \text{there is at least one }\theta\in\left(0,1\right)\text{ such that }\big|\sigma^{2}_{\theta}-\lim_{s\to\theta,s<\theta}\sigma_s^2\big|>0\,.
\end{aligned}
\end{equation}

It is standard in the theory of statistics of high-frequency data to address such questions \emph{path-wise}. This means that $H_0$ and $H_1$ are formulated for one particular path of the squared volatility $(\sigma_t^2(\omega))_{t\in[0,1]}$ and we strive to make a decision based on discrete observations of the given path of $(Y_t(\omega))_{t\in[0,1]}$. 
The semimartingale $\left(X_{t}\right)_{t\in\left[0,1\right]}$ is defined on a filtered probability space $(\Omega^{(0)},\mathcal{F}^{(0)},(\mathcal{F}^{(0)})_{t\in\left[0,1\right]},\mathbb{P}^{(0)})$. We need further assumptions on the coefficient processes of $(X_t)$.
\begin{ass} \label{assVola}
The processes $a$ and $\sigma$ are locally bounded. $\sigma$ is almost surely strictly positive, that is, $\inf_{t\in[0,1]}\sigma_t^2\ge \Km>0$.
\end{ass}
Our notation for jump processes follows \cite{JP}.
\begin{ass} \label{assjumps}
Suppose $\sup_{\omega,x}|\delta(s,x)|/\gamma(x)$ is locally bounded for some deterministic non-negative function $\gamma$ which satisfies for some $r\in[0,2]$:
\begin{align}\label{bg}\int_{\R}(1\wedge \gamma^r(x))\lambda(dx)<\infty\,.\end{align}
\end{ass}
The smaller $r$, the more restrictive Assumption \ref{assjumps}. The case $r=0$ is tantamount to jumps of finite activity.\\
On the null hypothesis, we allow for very general and rough continuous stochastic volatility processes. 
\begin{hypo}\label{hypo} Under the null hypothesis, 
	the modulus of continuity
\[
w_\delta(\sigma)_t = \sup_{s,r \leq t} \{ |\sigma_s - \sigma_r| : |s-r| < \delta \}
\]
is locally bounded in the sense that there exists $\aalpha > 0$ and a sequence of stopping times $T_n \to \infty$, such that $w_\delta(\sigma)_{(T_n\wedge 1)} \leq \KK_n \delta^\aalpha$, for some $\aalpha > 0$ and some (almost surely finite) random variables $\KK_n$.	
\end{hypo}
The regularity exponent $\aalpha\in(0,1]$ is selected for the testing problem. The test can be repeated for different values also. The regularity exponent coincides with a usual Hölder exponent when $\KK_n$ is a fix constant. Integrating a sequence $\KK_n$ enables us to include stochastic volatility processes in our theory. Since stochastic processes as Brownian motion are not in some fix Hölder class, it is crucial to work with (slightly) more general smoothness classes determined by the exponent $\aalpha > 0$ and by $\KK_n$. Observe that if
\begin{align*}
\mathbb{E}\left[\left|\sigma^{2}_{t}-\sigma^{2}_{s}\right|^{\mathfrak{b}}\right]\leq C\left|t-s\right|^{\gamma+\mathfrak{a}\mathfrak{b}}\,,\text{for some }\mathfrak{b},\,C>0\text{ and }\gamma>1\,,
\end{align*}
then the Kolmogorov \v{C}entsov Theorem implies that
\begin{align*}
\lim_{n \to +\infty} \mathbb{P}\Big(\sup_{\substack{s,t\in\left[0,1\right]\\ \left|t-s\right|\leq\delta}}\left|\sigma^{2}_{t}-\sigma^{2}_{s}\right|\leq L_{n}\delta^{\mathfrak{a}}\Big) = 1
\end{align*}
if $L_{n}\rightarrow+\infty$ arbitrarily slowly. In particular, we can impose that $L_n=\mathcal{O}(\log(n))$ for our derivation of upper bounds in the sections below. The null hypothesis is the same as in Assumption 3.1 of \cite{Bibinger2017}. Our test distinguishes the null hypothesis from alternative hypotheses of the following type.
\begin{alt}\label{alt} Under the alternative hypothesis, there exists at least one $\theta\in\left(0,1\right)$, such that 
\[\big|\Delta\sigma_{\theta}^2\big|=\big|\sigma_{\theta}^2-\lim_{s\to\theta,s<\theta}\sigma_s^2\big|=\delta>0\,.\]
We suppose that $\sigma_t^2=\sigma_t^{2,(c)}+\sigma_t^{2,(j)}$, where $(\sigma_t^{2,(c)})_{t\in[0,1]}$ satisfies \ref{hypo}. The jump component $(\sigma_t^{2,(j)})_{t\in[0,1]}$ is a pure-jump semimartingale which satisfies Assumption \ref{assjumps} with $r\le 2$.
\end{alt}
In particular, the alternative hypothesis does not restrict to only one jump. We establish a consistent test when \emph{at least} one non-negligible jump is present. Multiple jumps and quite general jump components are possible. Consistency of our test only requires that in a small vicinity of $\theta$, $(\sigma_t^{2,(c)})$ and $(\sigma_t^{2,(j)}-\Delta\sigma_{\theta}^2\1_{[\theta,1]}(t))$ are sufficiently regular such that the jump $\Delta \sigma_{\theta}^2$ is detected. \cite{Bibinger2017} impose in their Theorem 4.3 the condition that all volatility jumps are positive. This condition is replaced here by the semimartingale assumption on $(\sigma_t^{2,(j)})_{t\in[0,1]}$. Both ensure that $\Delta\sigma_{\theta}^2$ can not be compensated by opposite jumps in an asymptotically small vicinity.  
In order to incorporate microstructure noise, we have to extend the original probability space.
We set $\mathcal{G}_{t}=\mathcal{F}^{(0)}_{t}\otimes\sigma\left(\varepsilon_{s}:\, s\leq t\right)$. The data generating process $\left(Y_{t}\right)_{t\in\left[0,1\right]}$ is defined on the filtered probability space $\left(\Omega,\mathcal{G},(\mathcal{G}_{t})_{t\in\left[0,1\right]},\mathbb{P}\right)$. The construction can be pursued such that the process $(X_t)_{t\in[0,1]}$ remains a semimartingale on the extension with the same characteristic triplet and the same Grigelionis representation. For the details of the construction we refer to Chapter 16 in \cite{JP}. For the noise process, we impose further assumptions.
\begin{ass} \label{assnoise}
The stochastic process $\left(\varepsilon_{t}\right)_{t\in\left[0,1\right]}$ is defined on $\left(\Omega,\mathcal{G},(\mathcal{G}_{t})_{t\in\left[0,1\right]},\mathbb{P}\right)$ and fulfills the following conditions.
\begin{enumerate}
	\item[(1)] $\left(\varepsilon_{t}\right)_{t\in\left[0,1\right]}$ is a centered white noise process, $\E[\varepsilon_{t}]\equiv 0$, and with
	\begin{align*}
	\mathbb{E}\left[\varepsilon^{2}_{t}\right]=\eta^{2}\,.
	\end{align*}
	\item[(2)] The following moment condition holds.
	\begin{align}\label{momnoise}
\mathbb{E}\left[\left|\varepsilon_{t}\right|^{m}\right]<\infty,\text{ for all }m\in\mathbb{N}.
\end{align}
\end{enumerate}
\end{ass}
It is well-known that $\eta^2$ can be estimated in this model with $\sqrt{n}$-rate by either a rescaled realized volatility or from the negative first-lag autocovariances of the noisy increments. Under Assumption \ref{assnoise}, \cite{Zhang2005} provide a rate-optimal consistent estimator for $\eta^2$:
\begin{align}\label{etahat}\hat\eta^2=\frac{1}{2n}\sum_{i=1}^n(Y_{i\Delta_n}-Y_{(i-1)\Delta_n})^2=\eta^2+\mathcal{O}_{\P}\big(n^{-1/2}\big)\,.\end{align}
\begin{rem}\rm
The moment condition \eqref{momnoise} is standard in related literature, see, for instance, Assumption (WN) of \cite[page 221]{sahaliajacodbook} or Assumption 16.1.1 of \cite{JP}, but in a certain sense purely technical. Let us stress that in our setting, we do impose as less assumptions as possible on the volatility process $\left(\sigma_{t}\right)_{t\in\left[0,1\right]}$. More precisely, the regularity under \ref{hypo}, for arbitrarily small $\mathfrak{a}\in\left(0,1\right]$, requires the existence of all moments in \eqref{momnoise}. More precisely, the smaller $\mathfrak{a}$, the larger $m$ has to be chosen. Nevertheless, we point out that the moment condition is not that restrictive for standard models of volatility. In the usual case, for instance, where $\left(\sigma_{t}\right)_{t\in\left[0,1\right]}$ itself is assumed to be an It\^{o} semimartingale, when $\mathfrak{a}\approx 1/2$, only the existence of moments up to order $m=8$ has to be imposed.
\end{rem}
\begin{rem}\rm
While Assumption \ref{assnoise} is in line with standard conditions on the additive noise component in the literature, possible generalizations with respect to the structure of the noise process $(\varepsilon_{t})_{t\in\left[0,1\right]}$ in three directions are of interest: \emph{serial dependence, heterogeneity and endogeneity}. Such generalizations are also motivated by stylized facts in econometrics, see \cite{hansenlunde} for a detailed discussion. For instance, Chapter 16 in \cite{JP} includes conditional i.i.d.\ noise, endogenous as it may depend (in a certain way) on $(X_t)$, in the theory of pre-average estimators. This allows to model phenomena as noise by price discreteness (rounding). \cite{BibingerWinkel2018} provide some first extensions of spectral spot volatility estimation to serially correlated and heterogeneous noise. Though the possible extensions appear to be relevant for applications, we work in the framework formulated in Assumption \ref{assnoise}, mainly due to the lack of groundwork sufficient for the present work. Since we exploit some ingredients from previous works on spectral volatility estimation, particularly the form of the efficient asymptotic variance based on \cite{Altmeyer2015}, a generalization of our results requires non-trivial generalizations of these ingredients first. Furthermore, more general noise processes ask for extensive work on the estimation of the local long-run variance replacing \eqref{etahat}. This topic, however, is beyond the scope of this work. Let us remark that it is as well not obvious how to apply strong embedding principles in these cases to generalize our proofs. Since \cite{wuzhao2007} provide strong approximation results for weakly dependent time series, we nevertheless conjecture that certain generalizations in the three directions are possible.
\end{rem}

\section{The statistical methods\label{sec:3}}
\subsection{The continuous case\label{subsec:3.1}}
In this paragraph, we construct the test first for the model $\left(X_{t}\right)_{t\in\left[0,1\right]}$ without jumps, that is, we assume that
\begin{align*}
J_{t}\equiv 0.
\end{align*}
The construction of the test is based on a combination of the techniques by \cite{Altmeyer2015} and \cite{Bibinger2017}. In order to do so, we pick a sequence $h_{n}$ with
\begin{align}\label{eq:hn}
h_{n}\varpropto n^{-1/2}\log\left(n\right)
\end{align}
and $h_{n}^{-1}\in\mathbb{N}$.\\
The observation interval $\left[0,1\right]$ is split into $h_{n}^{-1}$ bins of length $h_{n}$, such that each bin is given by
\begin{align*}
\left[\left(k-1\right)h_{n},kh_{n}\right],\quad k=1,\ldots,h_{n}^{-1}.
\end{align*}
Furthermore, we consider the $L^{2}\left(\left[0,1\right]\right)$ orthonormal systems, given by
\begin{align*}
\Phi_{jk}\left(t\right)&=\Phi_{j}\left(t-\left(k-1\right)h_{n}\right)\\
\varphi_{jk}\left(t\right)&=\varphi_{j}\left(t-\left(k-1\right)h_{n}\right)
\end{align*}
with
\begin{align*}
\Phi_{j}\left(t\right)&=\sqrt{\frac{2}{h_{n}}}\sin\left(j\pi h_{n}^{-1}t\right)\mathbbm{1}_{\left[0,h_{n}\right]}\left(t\right),\quad j\geq 0, 0\leq t\leq 1\,,\\
\varphi_{j}\left(t\right)&=2n\sqrt{\frac{2}{h_{n}}}\sin\left(\frac{j\pi}{2nh_{n}}\right)\cos\left(j\pi h_{n}^{-1}t\right)\mathbbm{1}_{\left[0,h_{n}\right]}\left(t\right).
\end{align*}
We define, for any stochastic process $\left(L_{t}\right)_{t\in\left[0,1\right]}$, the increments $\Delta_{i}^{n}L$ by
\begin{align*}
\Delta_{i}^{n}L=L_{\frac{i}{n}}-L_{\frac{i-1}{n}}\,,i=1,\ldots,n,
\end{align*}
and the \emph{spectral statistics}
\begin{align*}
S_{jk}\left(L\right):=\sum_{i=1}^{n}\Delta_{i}^{n}L\,\Phi_{jk}\left(\frac{i}{n}\right).
\end{align*}
The squared volatility $\sigma^{2}_{\left(k-1\right)h_{n}}$ can be estimated locally by a parametric estimator through oracle versions of bias corrected linear combinations of the squared spectral statistics,
\begin{align}\label{sigmahat}
\hat{\sigma}^{2}_{\left(k-1\right)h_{n}}=\sum_{j=1}^{\nh}w_{jk}\left(S^{2}_{jk}\left(Y\right)-\left[\varphi_{jk},\varphi_{jk}\right]_{n}\frac{\eta^{2}}{n}\right)\,,
\end{align}
with variance minimizing oracle weights $w_{jk}$, given by
\begin{align}\label{weights}
w_{jk}=\frac{\left(\sigma^{2}_{\left(k-1\right)h_{n}}+\frac{\eta^{2}}{n}\left[\varphi_{jk},\varphi_{jk}\right]_{n}\right)^{-2}}{\sum_{m=1}^{\nh}\left(\sigma^{2}_{\left(k-1\right)h_{n}}+\frac{\eta^{2}}{n}\left[\varphi_{mk},\varphi_{mk}\right]_{n}\right)^{-2}}\,.
\end{align}
The empirical scalar products $\left[f,g\right]_{n}$, for any functions $f$ and $g$, are given by
\begin{align*}
\left[f,g\right]_{n}=\frac{1}{n}\sum_{i=1}^{n}f\left(\frac{i-\frac{1}{2}}{n}\right)g\left(\frac{i-\frac{1}{2}}{n}\right).
\end{align*}
The order in \eqref{eq:hn} ensures that the error by discretization of the signal part and the error due to noise are balanced.\\
In a second step we split the observation interval $\left[0,1\right]$ by some ``big blocks'' with length $\ah$:
\begin{align*}
\left[i\ah,\left(i+1\right)\ah\right],\quad i=0,\ldots,{\lfloor \left(\ah\right)^{-1}\rfloor}-1,
\end{align*}
where $\left(\an\right)_{n\in\mathbb{N}}$ is some $\mathbb{N}$-valued sequence fulfilling as $n\rightarrow +\infty$:
\begin{align}\label{eq:an2}
\sqrt{\an}\left(\ah\right)^{\mathfrak{a}}\sqrt{\log\left(n\right)}\longrightarrow 0 \text{ and }h_{n}^{-\varpi}/\an\longrightarrow 0
\end{align}
for some $\varpi>0$ and the regularity exponent $\mathfrak{a}\in\left(0,1\right]$ under the null hypothesis \ref{hypo}.
Using spectral estimators and averaging within each big block $\left[i\ah,\left(i+1\right)\ah\right]$ provides a \emph{consistent} estimator for $\sigma^{2}_{i\ah}$:
\begin{align}\label{rv}
\overline{RV}_{n,i}=\frac{1}{\an}\sum_{\ell=1}^{\an}\hat{\sigma}^{2}_{\ahlla},\quad i=0,\ldots,{\lfloor \left(\ah\right)^{-1}\rfloor}-1\,.
\end{align}
A feasible adaptive estimation is obtained by a two-stage method where $\hat\eta^2$ from \eqref{etahat} and
\begin{align}\label{pilotsigmahat}\frac{1}{\an}\sum_{l=k-\an\vee 0}^{(k-1)\vee (\an-1)}\,\sum_{j=1}^J\frac{1}{J}\left(S^{2}_{jl}\left(Y\right)-\left[\varphi_{jl},\varphi_{jl}\right]_{n}\frac{\hat\eta^{2}}{n}\right)=\sigma^2_{(k-1)h_n}+\KLEINO_{\P}(1)\end{align}
are inserted in the oracle weights to derive feasible estimated weights $\hat w_{jk}$. The result \eqref{pilotsigmahat} has been established and used in previous works on spectral volatility estimation, see \cite{BibingerWinkel2018}. The pilot volatility estimator \eqref{pilotsigmahat} is an average of squared bias corrected spectral statistics over $J$ Fourier frequencies and $\an$ bins. For some fix $J\in\N$ and an optimal choice of $\an\propto n^{\aalpha/(2\aalpha+1)}/\log(n)$, it renders a rate-optimal estimator for which the $\KLEINO_{\P}(1)$-term in \eqref{pilotsigmahat} is $\mathcal{O}_{\P}(n^{-\aalpha/(4\aalpha+2)})$. A sub-optimal choice of $\an$ will not affect our results, however. Other weights than \eqref{weights} do not yield an asymptotically efficient estimator with minimal asymptotic variance. With estimated versions of the optimal weights \eqref{weights}, \cite{Altmeyer2015} show that a Riemann sum over the estimates \eqref{sigmahat} yields a quasi-efficient estimator for the integrated squared volatility. Hence, we use the statistics \eqref{sigmahat} with exactly these weights and the orthogonal sine basis $(\Phi_{jk})$ motivated by the efficiency results of \cite{Reis2011}. Finally, with adaptive versions of the local volatility estimators \eqref{rv}
\begin{align}\label{rvad}
\overline{RV}_{n,i}^{ad}&=\frac{1}{\an}\sum_{\ell=1}^{\an}\hat{\sigma}^{2,ad}_{\ahlla},\quad i=0,\ldots,{\lfloor \left(\ah\right)^{-1}\rfloor}-1\,,\\
\notag \hat{\sigma}^{2,ad}_{(k-1)h_n}&=\sum_{j=1}^{\nh}\hat w_{jk}\left(S^{2}_{jk}\left(Y\right)-\left[\varphi_{jk},\varphi_{jk}\right]_{n}\frac{\hat\eta^{2}}{n}\right)\,,
\end{align} 
our test statistic is given by
\begin{align}\label{eq:mainVn}
\overline{V}_{n}=\mathop{\mathrm{max}}\limits_{i=0,\ldots,{\lfloor \left(\ah\right)^{-1}\rfloor}-2}\left|\frac{\overline{RV}_{n,i}^{ad}-\overline{RV}_{n,i+1}^{ad}}{\sqrt{8\hat\eta}\;\big|\overline{RV}^{ad}_{n,i+1}\big|^{3/4}}\right|\,,
\end{align}
where $\hat\eta=\sqrt{\hat\eta^2}$, with $\hat\eta^2$ from \eqref{etahat}.
We write the absolute value in the denominator, since due to the bias correction in \eqref{sigmahat} the statistics $(\overline{RV}_{n,i})$ and $(\overline{RV}^{ad}_{n,i}),i=0,\ldots,{\lfloor \left(\ah\right)^{-1}\rfloor}-1$ are not guaranteed to be positive. 
\begin{rem} \rm
\begin{enumerate}[(1)]
\item The construction of the test statistic \eqref{eq:mainVn} is based on the idea to compare the values of the spot volatility process $\left(\sigma^{2}_{t}\right)_{t\in\left[0,1\right]}$ on intervals $\left[i\ah,\left(i+1\right)\ah\right]$ and 
$\left[\left(i+1\right)\ah,\left(i+2\right)\ah\right]$ and to reject the null hypothesis of no jumps, if the test statistic $\overline{V}_{n}$ fulfills $\overline{V}_{n}\geq c_{n}$ for some accurate sequence $c_{n}$.
\item The statistic \eqref{eq:mainVn} significantly differs from the statistic $V_{n}$ given in Equation (13) of \cite{Bibinger2017} beyond replacing spot volatility estimates by noise-robust spot volatility estimates. Though both statistics are quotients, the underlying structure of them is different. Whereas in \cite{Bibinger2017} the simple structure of the (asymptotic) variance of spot volatility estimates allows to use statistics based on their quotients, \eqref{eq:mainVn} is based on differences rescaled with their estimated variances. The statistics which are used to wipe out the influence of the noise process imply that volatility does not simply ``cancel out'' in our case as in Proposition A.3 of \cite{Bibinger2017}. The construction of \eqref{eq:mainVn} is particularly appropriate from an implementation point of view, since it scales to obtain an asymptotic distribution-free test and makes it possible to avoid pre-estimation of higher order moments.\qed
\end{enumerate}
\end{rem}
In order to increase the performance of the statistic, we also include a statistic $\overline{V}^{ov}_{n}$ based on \emph{overlapping} big blocks:
\begin{align}\label{eq:mainVno}
\overline{V}^{ov}_{n}=\mathop{\mathrm{max}}\limits_{i=\an,\ldots,h_{n}^{-1}-\an}\left|\frac{\overline{RV}^{ov}_{n,i}-\overline{RV}^{ov}_{n,i+\an}}{\sqrt{8\hat\eta}\;\big|\overline{RV}^{ov}_{n,i+\an}\big|^{3/4}}\right|.
\end{align}
with $\overline{RV}^{ov}_{n,i}$ given by
\begin{align*}
\overline{RV}^{ov}_{n,i}=\frac{1}{\an}\sum_{\ell=i-\an+1}^{i}\hat{\sigma}^{2,ad}_{\left(\ell-1\right)\hn},\quad i=\an,\ldots,h_{n}^{-1}\,.
\end{align*}

\subsection{The discontinuous case\label{subsec:3.2}}
In this paragraph, we generalize the method to be robust in the presence of jumps in \eqref{itoprocess}. When $(\sigma_t)_{t\in[0,1]}$ is our target of inference, the jumps are a nuisance quantity. In order to eliminate jumps of $(X_t)_{t\in[0,1]}$ in the approach, we consider truncated spot volatility estimates
\begin{align}\label{rvtr}
\overline{RV}_{n,i}^{tr}=\frac{1}{\an}\sum_{\ell=i-\an+1}^{i}\hat{\sigma}^{2,ad}_{\left(\ell-1\right)\hn}\mathbbm{1}_{\{|\hat{\sigma}^{2,ad}_{\left(\ell-1\right)\hn}|\le h_n^{\tau-1}\}},\quad i=\an,\ldots,h_n^{-1}\,,
\end{align}
with a truncation exponent $\tau\in(0,1)$. Truncated volatility estimators have been introduced first for integrated volatility estimation by \cite{mancini} and \cite{jacodjumps}. We define the test statistics with the truncated spot volatility estimates \eqref{rvtr}
\begin{subequations}
\begin{align}
\label{test}\overline{V}_{n}^{\tau}&=\mathop{\mathrm{max}}\limits_{i=1,\ldots,{\lfloor \left(\ah\right)^{-1}\rfloor}-1}\left|\frac{\overline{RV}_{n,i\an}^{tr}-\overline{RV}_{n,(i+1)\an}^{tr}}{\sqrt{8\hat\eta}\;\big|\overline{RV}^{tr}_{n,(i+1)\an}\big|^{3/4}}\right|\,,\\
\label{testo}\overline{V}^{ov,\tau}_{n}&=\mathop{\mathrm{max}}\limits_{i=\an,\ldots,h_n^{-1}-\an}\left|\frac{\overline{RV}_{n,i}^{tr}-\overline{RV}_{n,i+\an}^{tr}}{\sqrt{8\hat\eta}\;\big|\overline{RV}^{tr}_{n,i+\an}\big|^{3/4}}\right|\,.
\end{align}
\end{subequations}
\section{Asymptotic theory}\label{sec:4}
\subsection{Limit theorem under the null hypothesis\label{subsec:4.1}}
The hypothesis test formulated in Section \ref{sec:2} is based on asymptotic results for the statistics $\overline{V}_{n}$ and $\overline{V}^{ov}_{n}$, constructed in Section $\ref{sec:3}$.
\begin{theo} \label{thm:1}
Set $m_n = \lfloor \left(\ah\right)^{-1}\rfloor $, $\gamma_{m_n} = [4 \log(m_n) - 2 \log(\log (m_n))]^{1/2}$ and assume that $J_{t}\equiv 0$. If Assumptions \ref{assVola} and \ref{assnoise} hold and $\an$ satisfies condition \eqref{eq:an2}, then we have under \ref{hypo} that
\begin{align}\label{eq:limitnov}
\sqrt{\log\left(m_{n}\right)}\left(\sqrt{\an}\overline{V}_{n}-\gamma_{m_{n}}\right)\overset{\mathrm{d}}{\longrightarrow}V
\end{align}
where $V$ follows an extreme value distribution with distribution function
\begin{align*}
\P(V \leq x) = \exp(-\pi^{-1/2} \exp(-x)).\
\end{align*}
\end{theo}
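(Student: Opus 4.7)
The plan is to reduce $\sqrt{\alpha_n}\overline{V}_n$ to the maximum of approximately i.i.d.\ $|\mathcal{N}(0,2)|$ random variables and then invoke classical extreme value theory for the absolute value of a Gaussian. The key observation about the standardisation is that each oracle block estimator $\overline{RV}_{n,i}$ has asymptotic variance $\approx 8\eta\sigma^3_{i\alpha_nh_n}/\alpha_n$, so the denominator $\sqrt{8\hat\eta}\,\big|\overline{RV}^{ad}_{n,i+1}\big|^{3/4}$ consistently estimates the standard deviation of a single block, and $\sqrt{\alpha_n}$ times the standardised difference of two adjacent blocks has asymptotic variance exactly $2$; two independent $\mathcal{N}(0,8\eta\sigma^3)$ variables combine to $\mathcal{N}(0,16\eta\sigma^3)$, giving the factor $2$ after division by $8\eta\sigma^3$.

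I would first reduce the adaptive estimator to its oracle version. The rate $\hat\eta^2-\eta^2=\mathcal{O}_\P(n^{-1/2})$ from \eqref{etahat} and the consistency of the pilot in \eqref{pilotsigmahat} give $\hat w_{jk}-w_{jk}=o_\P(1)$ uniformly, whence $\overline{RV}^{ad}_{n,i}-\overline{RV}_{n,i}$ is negligible at the required rate $o_\P(1/\sqrt{\alpha_n\log m_n})$. I would then decompose the oracle estimator as $\overline{RV}_{n,i}=\sigma^2_{i\alpha_nh_n}+\mathcal{E}_{n,i}$, with $\mathcal{E}_{n,i}$ a centred stochastic error plus a deterministic bias of order $L_n(\alpha_nh_n)^{\aalpha}$ controlled by Hypothesis~\ref{hypo}. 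Condition \eqref{eq:an2}, together with the permissible growth $L_n=\mathcal{O}(\log n)$, is exactly what makes this bias asymptotically negligible inside the test statistic, while a uniform maximal inequality for the fluctuations, based on the moment Assumption~\ref{assnoise}(2) and the independence structure described below, yields $\sup_i|\mathcal{E}_{n,i}|=o_\P(1/\sqrt{\log m_n})$ and therefore justifies replacing the random denominator by the population quantity $\sqrt{8\eta\sigma^3_{(i+1)\alpha_nh_n}}$.

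The central probabilistic step is a uniform Gaussian approximation of the vector of standardised block differences. Since $\Phi_{jk}$ and $\varphi_{jk}$ are supported on the single bin $[(k-1)h_n,kh_n]$, the spectral statistics $S_{jk}(Y)$ from different bins depend on disjoint observations, so estimators belonging to different big blocks are, conditionally on the volatility path, independent. Combined with the central limit theorem for quadratic functionals of noisy increments obtained in \cite{Altmeyer2015}, this gives the pointwise convergence $\sqrt{\alpha_n}(\overline{RV}_{n,i}-\overline{RV}_{n,i+1})/\sqrt{8\eta\sigma^3_{i\alpha_nh_n}}\overset{\mathrm{d}}{\longrightarrow}\mathcal{N}(0,2)$. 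To lift this to the maximum I would couple the entire vector to a sequence of i.i.d.\ $\mathcal{N}(0,2)$ variables with uniform error $o_\P(1/\sqrt{\log m_n})$, using a KMT-type strong invariance principle or, alternatively, the strong approximation for weakly dependent sequences of \cite{wuzhao2007}. With the coupling in hand, the theorem reduces to the standard Fisher--Tippett computation: the tail $\P(|Z|>t)\sim(2/\sqrt{\pi})\,t^{-1}e^{-t^2/4}$ for $Z\sim\mathcal{N}(0,2)$ together with the explicit relation $\gamma_{m_n}^2/4=\log m_n-\tfrac12\log\log m_n$ yields $m_n\,\P(|Z|>\gamma_{m_n}+x/\sqrt{\log m_n})\to\pi^{-1/2}e^{-x}$, hence the claimed Gumbel limit with cumulative distribution function $\exp(-\pi^{-1/2}e^{-x})$.

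The main obstacle is the uniformity required at the very stringent rate $1/\sqrt{\log m_n}$: every reduction, from adaptive to oracle, from estimated to deterministic denominator, and most crucially the Gaussian coupling, has to be controlled uniformly over all $m_n\sim(\alpha_nh_n)^{-1}$ blocks. This is where the moment condition \eqref{momnoise} and the Hölder exponent $\aalpha$ in Hypothesis~\ref{hypo} enter most forcefully, and where the strong-approximation machinery of \cite{wuzhao2007} must be carefully interfaced with the spectral estimation framework of \cite{Altmeyer2015}, so that the $o_\P(1)$ remainders in the pointwise CLTs can be promoted to estimates that survive the logarithmic inflation induced by the maximum over $m_n$ blocks.
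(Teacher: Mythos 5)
Your route matches the paper's at the structural level: reduce the statistic to scaled quadratic forms of Gaussian increments and noise, couple these to i.i.d.\ normals via a KMT-type strong invariance principle, and close with the extreme-value machinery of \cite{wuzhao2007} together with the $\mathcal{N}(0,2)$ tail computation that produces the constant $\pi^{-1/2}$. Your identification of the block variance $\approx 8\eta\sigma^3/\alpha_n$, the factor $2$ for the rescaled standardized block differences, and the role of \eqref{eq:an2} in suppressing the volatility-regularity bias are all correct, and you rightly flag uniform control over all $m_n$ blocks as the central difficulty.

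There is, however, a quantitative gap in the denominator replacement. With $a_i = \overline{RV}_{n,i}-\overline{RV}_{n,i+1}$ and $\mathcal{E}_{n,i}=\overline{RV}_{n,i}-\sigma^2_{i\alpha_n h_n}$, the error from substituting $\sqrt{8\eta}\sigma^{3/2}_{i\alpha_nh_n}$ for the random normalization contributes, after the scaling $\sqrt{\alpha_n\log m_n}$, an amount bounded by
\[
\sqrt{\alpha_n\log m_n}\,\Big(\max_i|a_i|\Big)\Big(\max_i\Big|\big(\sqrt{8\hat\eta}\,|\overline{RV}^{ad}_{n,i+1}|^{3/4}\big)^{-1}-\big(\sqrt{8\eta}\,\sigma^{3/2}_{i\alpha_nh_n}\big)^{-1}\Big|\Big)\,.
\]
Since $\max_i|a_i|=O_\P\big(\sqrt{\log m_n}/\sqrt{\alpha_n}\big)$, this is of order $\log m_n\cdot\max_i|\mathcal{E}_{n,i}|$ after a Taylor step. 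Your stated bound $\sup_i|\mathcal{E}_{n,i}|=o_\P(1/\sqrt{\log m_n})$ therefore only yields $o_\P(\sqrt{\log m_n})$, which does not vanish; the bound actually required is $\sup_i|\mathcal{E}_{n,i}|=o_\P(1/\log m_n)$. That stronger rate does hold (the fluctuation part is $O_\P(\sqrt{\log m_n}/\sqrt{\alpha_n})$ and the bias part $O_\P(L_n(\alpha_nh_n)^{\aalpha})$; both are $o(1/\log m_n)$ since $\alpha_n$ grows polynomially and \eqref{eq:an2} forces $(\alpha_nh_n)^{\aalpha}=o\big(1/\sqrt{\alpha_n\log n}\big)$), so the gap is a slip rather than an obstruction, but as written the argument does not close. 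The paper circumvents this with a two-variable Taylor expansion in which one factor is bounded uniformly in probability and the other is driven to zero, rather than multiplying two maxima.

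Two smaller points. The block differences are 1-dependent, not i.i.d., since $i$ and $i+1$ share the block $\overline{RV}_{n,i+1}$; Lemma~1 of \cite{wuzhao2007} is exactly the tool for this 1-dependent structure (extremal index one, so the limiting constant coincides with your i.i.d.\ computation), and phrasing the coupling as ``to i.i.d.\ $\mathcal{N}(0,2)$'' elides this. Relatedly, KMT and \cite{wuzhao2007} are used in tandem, not as alternatives: KMT (with the moment bound of \cite{sakhanenko}) builds the bin-level Gaussian coupling, and Wu--Zhao then supplies the extreme-value theorem for the resulting partial-sum differences.
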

Theorem \ref{thm:1} is a key tool tackling the testing problem which is based on non-overlapping big blocks. The following result covers the case of overlapping big blocks. 
\begin{cor} \label{cor:1}
Given the assumptions of Theorem \ref{thm:1}, the following weak convergence holds under \ref{hypo}:
\begin{align}\label{eq:limitov}
\sqrt{\log\left(m_{n}\right)}\sqrt{\an}\overline{V}^{ov}_{n}-2\log\left(m_{n}\right)-\frac{1}{2}\log\left(\log\left(m_{n}\right)\right)-\log\left(3\right)\overset{\mathrm{d}}{\longrightarrow}V,
\end{align}
with $V$ as in Theorem \ref{thm:1}.
\end{cor}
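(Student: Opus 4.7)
My strategy is to reduce the overlapping statistic to the maximum of a stationary Gaussian sequence with explicit, piecewise linear autocorrelation, and then apply Pickands-type extreme value theory to identify the new centering constants. The argument builds on that of Theorem~\ref{thm:1}.

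First, I would extend the Gaussian approximation developed in the proof of Theorem~\ref{thm:1} from the $m_n$ non-overlapping indices to the full overlapping grid. The same ingredients (uniform spot-volatility bounds, consistency of $\hat\eta^2$, control of the bias-correction remainders, and the pilot-estimator bound \eqref{pilotsigmahat}) apply verbatim to every $i\in\{\an,\ldots,h_n^{-1}-\an\}$, because the error analysis is performed bin by bin. The extra union-bound factor $h_n^{-1}/m_n=\an$ is only polynomial in $n$, so it is absorbed by the $\sqrt{\log(m_n)}$-scaling. The outcome is a uniform coupling of the studentised sequence with a centred Gaussian sequence $(Y_i)$ of variance~$2$.

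Second, I would compute the autocorrelation of $(Y_i)$. Writing $\overline{RV}^{ov}_{n,i}-\overline{RV}^{ov}_{n,i+\an}$ as the difference of two averages of $\an$ asymptotically independent spot-variance estimates on disjoint bin ranges and counting the shared bins between indices $i$ and $i+k$, a direct calculation gives
\begin{align*}
\mathrm{Corr}(Y_i,Y_{i+k}) = \begin{cases}
1-3k/(2\an), & 0\le k\le \an,\\
k/(2\an)-1, & \an \le k\le 2\an,\\
0, & k\ge 2\an.
\end{cases}
\end{align*}
The slope $3/(2\an)$ at the origin is what will ultimately produce the extra $\log(3)$ in the centering.

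Third, I would apply extreme value theory for stationary Gaussian sequences with linear local correlation. Rescaling the index by $\an$, $(Y_i/\sqrt{2})$ is the discretisation, on a horizon of length $m_n$, of a standard Gaussian process with $r(\tau)=1-\frac{3}{2}\tau+\KLEINO(\tau)$ near the origin and vanishing beyond lag~$2$. Pickands' theorem for $H=1/2$ (Pickands constant $H_1=1$, slope $L=3/2$), combined with the standard two-sided extension that up- and down-crossings of the levels $\pm u$ become asymptotically independent, yields
\begin{align*}
\P\bigl(\sqrt{\an}\,\overline{V}^{ov}_n\le y\bigr) = \exp\Bigl(-\tfrac{3 m_n y}{2\sqrt{\pi}}\,e^{-y^2/4}\bigl(1+o(1)\bigr)\Bigr),\qquad y\to\infty.
\end{align*}
Inverting this tail exactly as in the proof of Theorem~\ref{thm:1}---set the right-hand side equal to $\exp(-\pi^{-1/2}e^{-x})$, obtain $y^2=4\log(m_n)+2\log\log(m_n)+4\log(3)+4x+o(1)$, and Taylor-expand around $y=2\sqrt{\log(m_n)}$---produces the centering $2\log(m_n)+\tfrac{1}{2}\log\log(m_n)+\log(3)$ and the scaling $\sqrt{\log(m_n)}$ of \eqref{eq:limitov}.

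The main obstacle is the rigorous Pickands-type step in this triangular-array setting, where both the correlation function (through $\an$) and the time horizon ($m_n$) depend on $n$; in particular, the two-sided maximum must be handled in the presence of the negative correlation lobe on $[\an,2\an]$. A secondary, more technical point is upgrading the Gaussian coupling of Theorem~\ref{thm:1} to uniform precision $\KLEINO_{\P}(1/\sqrt{\log(m_n)})$ across the roughly $h_n^{-1}$ overlapping indices rather than only across the $m_n$ non-overlapping ones.
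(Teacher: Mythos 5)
Your autocorrelation computation (slope $3/2$ at the origin), the Pickands application, and the inversion producing the centering $2\log(m_n)+\tfrac12\log\log(m_n)+\log(3)$ are all correct, and the route is essentially the one the paper takes. The paper's steps (a)--(e) mirror your first paragraph: one replaces $\overline{RV}^{ov}_{n,i}$ by its Gaussian surrogate $\overline{Z}^{ov}_{n,i}$, time-shifts the volatility, replaces the estimated scale by its limit, and controls oracle versus adaptive weights; the $\alpha_n$-times larger overlapping index set is absorbed by raising the Markov-inequality exponent, exactly as you note. The one place the arguments diverge is the extreme-value step itself: rather than re-deriving a Pickands-type result for a triangular array, the paper directly invokes Lemma 2 of Wu and Zhao (2007) with $H$ chosen as the rectangular kernel, which is precisely the discrete, triangular-array statement you flag as ``the main obstacle'' and whose constants encode the slope $3/2$ --- so what you derive from Pickands by hand, the paper imports wholesale. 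Two remarks on your stated concerns: the negative correlation lobe on $[\alpha_n,2\alpha_n]$ is harmless, since the extreme-value asymptotics are governed by $r$ near the origin together with a Berman-type separation condition that holds trivially here ($r\equiv 0$ beyond lag $2\alpha_n$); and the discrete-to-continuous comparison that your sketch leaves implicit is controlled in the paper via L\'{e}vy's modulus of continuity. Modulo citing Wu--Zhao's Lemma 2 in place of your Pickands step, the proposal is sound.
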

We extend this result to the setup with jumps in $(X_t)_{t\in[0,1]}$ when using truncated functionals.
\begin{prop} \label{prop:withjumps}
Let $m_n$ and $\gamma_{m_n} $ be the sequences defined in Theorem \ref{thm:1}. Suppose $\an=\kappa h_n^{-\beta}$ for a constant $\kappa$ and with $0<\beta<1$, Assumption \ref{assVola}, Assumption \ref{assnoise} and Assumption \ref{assjumps} with 
\begin{align}\label{resjumps}r<\min{\Big(2-\frac{\beta}{\tau},2\tau^{-1}(1-\beta),\tau^{-1},\frac34\big(1+\tau-\frac{\beta}{2}\big)\Big)}\,.\end{align} 
Then we have under \ref{hypo} that
\begin{subequations}
\begin{align}
\label{eq:limitnovj}\sqrt{\log\left(m_{n}\right)}\left(\sqrt{\an}\,\overline{V}^{\tau}_{n}-\gamma_{m_{n}}\right)\overset{\mathrm{d}}{\longrightarrow}V\,,\\
\label{eq:limitovj}\sqrt{\log\left(m_{n}\right)}\sqrt{\an}\,\overline{V}^{ov,\tau}_{n}-2\log\left(m_{n}\right)-\frac{1}{2}\log\left(\log\left(m_{n}\right)\right)-\log\left(3\right)\overset{\mathrm{d}}{\longrightarrow}V\,,
\end{align}
\end{subequations}
with $V$ as in Theorem \ref{thm:1}.
\end{prop}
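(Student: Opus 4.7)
The plan is to deduce Proposition \ref{prop:withjumps} from Theorem \ref{thm:1} and Corollary \ref{cor:1} by a Slutsky-type argument. Let $\overline{V}_n^{(c)}$ and $\overline{V}_n^{ov,(c)}$ denote the non-truncated statistics one would form from the fictitious observations $Y_t^{(c)}:=C_t+\varepsilon_t$, i.e.\ after discarding the jump part $J$. Since the continuous part $(C_t)$ satisfies Assumption \ref{assVola} with the same $\sigma$ and \ref{hypo}, Theorem \ref{thm:1} and Corollary \ref{cor:1} immediately give the claimed Gumbel limits for $\overline{V}_n^{(c)}$ and $\overline{V}_n^{ov,(c)}$. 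Therefore it suffices to establish the uniform comparisons
\begin{align*}
\sqrt{\log m_n}\sqrt{\an}\,\bigl|\overline{V}_n^{\tau}-\overline{V}_n^{(c)}\bigr|=o_{\P}(1),\qquad \sqrt{\log m_n}\sqrt{\an}\,\bigl|\overline{V}_n^{ov,\tau}-\overline{V}_n^{ov,(c)}\bigr|=o_{\P}(1).
\end{align*}
Because the denominators $\sqrt{8\hat\eta}\,|\overline{RV}^{tr}_{n,i+\an}|^{3/4}$ are uniformly bounded away from zero under Assumption \ref{assVola} (using $\inf_t\sigma_t^2\ge\Km$ and \eqref{etahat}), the task reduces to the uniform numerator estimate
\begin{align*}
\max_{i}\bigl|\overline{RV}_{n,i\an}^{tr}-\overline{RV}_{n,i\an}^{ad,(c)}\bigr|=o_{\P}\bigl((\an\log m_n)^{-1/2}\bigr),
\end{align*}
where $\overline{RV}_{n,i\an}^{ad,(c)}$ denotes the adaptive block estimator built from $Y^{(c)}$.

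The main step is a bin-by-bin decomposition. I split indices $\ell\in\{1,\ldots,h_n^{-1}\}$ into \emph{clean} bins, on which $J$ carries only jumps of size below a cut-off $v_n\asymp h_n^{\tau'}$ with some $\tau'\in(0,\tau)$, and \emph{jump} bins otherwise. Writing $S_{jk}(Y)=S_{jk}(Y^{(c)})+S_{jk}(J)$ and expanding $\hat\sigma^{2,ad}_{(\ell-1)h_n}$, the $S_{jk}(J)$-contribution on clean bins is of order $\mathcal{O}_{\P}(v_n^{2-r}h_n^{-1})$, strictly below the truncation threshold $h_n^{\tau-1}$ for $n$ large. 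Hence clean bins survive truncation and match their continuous analogue up to a small remainder. On jump bins, truncation caps the contribution per bin by $h_n^{\tau-1}$, while by Assumption \ref{assjumps} and Markov's inequality the number of jump bins in a big block of length $\ah$ is $\mathcal{O}_{\P}(\an h_n\,v_n^{-r})$. Aggregating, the block-wise error is $\mathcal{O}_{\P}(h_n^{\tau-1+1-r\tau'}+v_n^{2-r}h_n^{-1})$, and the maximum over $m_n$ blocks introduces an additional $\sqrt{\log m_n}$ factor. The four constraints in \eqref{resjumps} correspond exactly to making each of these terms $o((\an\log m_n)^{-1/2})$ for some admissible $\tau'\in(0,\tau)$: $r<2-\beta/\tau$ controls the capped-jump contribution, $r<2\tau^{-1}(1-\beta)$ and $r<\tau^{-1}$ control the cross terms of the adaptive spectral estimator, and $r<\tfrac34(1+\tau-\beta/2)$ arises when propagating the numerator bound through the $3/4$-power denominator via a first-order expansion around the continuous-case denominator.

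The main obstacle is the non-smoothness of the truncation indicator $\1_{\{|\hat\sigma^{2,ad}_{(\ell-1)h_n}|\le h_n^{\tau-1}\}}$, which demands two types of uniform control. On one hand, no clean bin may be spuriously truncated, which requires a uniform-in-$\ell$ deviation bound of the form $\max_\ell|\hat\sigma^{2,ad}_{(\ell-1)h_n}-\sigma^2_{(\ell-1)h_n}|=\mathcal{O}_{\P}(\log(n)^c h_n^{1/4-\epsilon})$ for some $c>0$; this is the same type of uniform spot volatility concentration inequality that underlies the proof of Theorem \ref{thm:1} and should be available from the analysis there. On the other hand, the residual on retained jump bins must be beaten by the target rate, which forces the delicate tuning of the auxiliary exponent $\tau'$ against $r$, $\tau$ and $\beta$. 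Once both pieces are in place, the conclusion is a routine combination with the established limits for $\overline{V}_n^{(c)}$ and $\overline{V}_n^{ov,(c)}$.
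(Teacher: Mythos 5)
Your high-level reduction is the same as the paper's: compare the truncated statistics to the non-truncated statistics formed from $C+\varepsilon$, reduce to a uniform numerator bound $\max_i|\overline{RV}^{tr}_{n,i}-\overline{RV}^{ad}_{n,i}(C+\varepsilon)|=\KLEINO_{\P}\big((\an\log m_n)^{-1/2}\big)$, and then split bins according to whether they carry ``large'' jumps. That much is in line with the paper. But there are two genuine gaps.

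First, you assert that ``the maximum over $m_n$ blocks introduces an additional $\sqrt{\log m_n}$ factor''. This is the heart of the matter and cannot be taken for granted: the block-wise jump contributions are sums of heavy-tailed, non-Gaussian quantities, and a $\sqrt{\log m_n}$ price is \emph{not} automatic. The paper resolves exactly this point by observing that, after truncating to small jumps, $\big(\sum_{\ell\le i} 2h_n^{-1}\mathcal{Z}_\ell\big)_i$ is a submartingale (since the compensated small-jump process is a martingale and the square of a martingale is a submartingale), and then applies Doob's submartingale maximal inequality with the first-moment bound $\E[\mathcal{Z}_\ell]\lesssim h_n h_n^{\tau(1-r/2)}$ from \cite{aitjac10}. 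This is specifically highlighted in Remark~\ref{remjumps} as the step that allows relaxing the L\'evy-type assumptions used in \cite{Bibinger2017}. Your proposal contains no substitute for this tool, so the crucial uniform-over-blocks control of the small-jump term is missing.

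Second, your attribution of the four constraints in \eqref{resjumps} is off. The bound $r<\frac34\big(1+\tau-\frac{\beta}{2}\big)$ does not arise from ``propagating the numerator bound through the $3/4$-power denominator''; in the paper it comes from controlling the discretization error between $|J_{\ell h_n}-J_{(\ell-1)h_n}|^2$ and $\big(\sum_i|\Delta_i^n J|\big)^2$ on bins carrying at least two medium-sized jumps (the $\Gamma_2$ term, via \eqref{hpr} with $l=2$). Similarly, $r<\tau^{-1}$ and $r<2\tau^{-1}(1-\beta)$ are not ``cross terms of the adaptive spectral estimator''; they come from union bounds on the counting process $N^n(h_n^{\tau/2})$, first over bins (the ``at most one jump per bin'' event) and then over big blocks (bounding the number of blocks with at least $l$ jumps). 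The uniform deviation bound you invoke for spot volatility estimates is in the right spirit (the paper uses it via Lemma 2 of \cite{BibingerWinkel2018} and a Markov bound), but without the Doob argument and with the mis-sourced constraints the proof as sketched would not close.
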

It is natural that we derive the same limit results as above, since the truncation aims to eliminate the nuisance jumps. Proposition \ref{prop:withjumps} gives rather minimal conditions, in particular \eqref{resjumps}, under that we can guarantee that the truncation works in this sense. 
\begin{rem}\label{remjumps}Condition \eqref{resjumps} ensures that different error terms in the proof of Proposition \ref{prop:withjumps} are asymptotically negligible. Though we state it in terms of upper bounds on the jump activity $r$, it rather puts restrictions on the interplay between $r$, $\tau$ and $\beta$. Given $\aalpha$ from \ref{hypo}, we choose $\beta$ close to $2\aalpha/(2\aalpha+1)$ to attain the highest possible power of the test. This results in $0<\beta<2/3$, where the case $\beta\approx 1/2$ for $\aalpha=1/2$ appears the most relevant one including a test for jumps in a semimartingale volatility process. Rewritten in terms of bounds on $\tau$, \eqref{resjumps} gives:
\[\max\Big(\frac{\beta}{2-r},\frac43 r+\frac{\beta}{2}-1\Big)<\tau<\min\Big(r^{-1},2r^{-1}(1-\beta)\Big)\,.\]
For finite activity, $r=0$, we only have mild lower bounds on the choice of $\tau$. Usually, a choice of $\tau$ close to 1 is advocated in previous works on truncated volatility estimation. For $\beta\approx 1/2$, this requires $r<1$. The different error terms under noise for the maximum obtained here actually suggest that $\tau=3/4$ is an even better choice when we require only $r<4/3$. Overall, the conditions on the jumps are not much more restrictive than required for central limit theorems of linear volatility estimators, see Chapter 13 of \cite{JP}. Compared to Proposition 3.5 of \cite{Bibinger2017}, we relax the conditions on $(J_t)$ by a more sophisticated strategy of our proof. In particular, we do not have to restrict to a L\'{e}vy-type process with independent increments, since we work with Doob's submartingale maximal inequality instead of Kolmogorov’s maximal inequality. With this strategy it is also possible to generalize the result in Proposition 3.5 of \cite{Bibinger2017}.
\end{rem}
\subsection{Key ideas of the proof of the limit results\label{subsec:4.2}}
Since the proofs of the results stated in Section \ref{subsec:4.1} are quite long, we want to sketch the key ideas of the proof shortly. The details are worked out in Section \ref{sec:6}.\\
Starting with the \textit{continuous case}, for the results given in Theorem \ref{thm:1} and Corollary \ref{cor:1}, the main ingredients are described as follows. In the \textit{first step} we carry out the crucial approximation where we show that the error, replacing the true log-price increments of $(X_t)_{t\in[0,1]}$ by Brownian increments multiplied with a locally constant approximated volatility, is negligible. More precisely, we show that the spectral statistics $S_{jk}\left(Y\right)$ are adequately approximated through $\sigma_{{\lfloor \an^{-1}\left(k-1\right)\rfloor}\ah}S_{jk}\left(W\right)+S_{jk}\left(\varepsilon\right)$ with the volatility approximated constant over the big blocks. The analogues of $\overline{RV}_{n,i}$ after the approximation are denoted $\overline{Z}_{n,i}$, given in \eqref{eq:Zni}.\\ 
In the \textit{second step}, we conduct a time shift with respect to the volatility in $\overline{Z}_{n,i+1}$ to approximate the volatility by the same constant in the differences $\overline{Z}_{n,i}-\overline{Z}_{n,i+1}$.\\
The \textit{third step} is to replace the estimated asymptotic standard deviation in the denominator in \eqref{eq:mainVn} by its stochastic limit. The latter step is essentially completed by a Taylor expansion. Finally, we establish in a \textit{fourth step} that the difference between the statistics using \eqref{rv} with oracle weights and the statistics using \eqref{rvad} with adaptive weights is sufficiently small to extend the results to the feasible statistics.\\
The approximation steps combine Fourier analysis for the spectral estimation with methods from stochastic calculus. Disentangling the approximation errors of maximum statistics requires a deeper study than for linear statistics. After an appropriate decomposition of the terms, we frequently use Burkholder, Jensen, Rosenthal and Minkowski inequalities to derive upper bounds.\\
The final step is to apply strong invariance principles by \cite{KMT2} and to apply results from \cite{sakhanenko} to conclude with Lemma 1 and Lemma 2, respectively, in \cite{wuzhao2007}. Concerning the non-overlapping statistics we need Lemma 1, whereas the overlapping case needs the more involved limit result presented in Lemma 2 of \cite{wuzhao2007}.\\
In order to prove Proposition \ref{prop:withjumps}, we show that under the stated conditions the jump robust statistics provide the same limit as in the continuous case. That is, the jumps do not affect the limit at all. We decompose the additional error term by truncation in several terms of different structure which we prove to be asymptotically negligible under the mild conditions \eqref{resjumps} on the jump activity and its interplay with the truncation and smoothing parameters. We use Doob's maximal submartingale inequality to bound one crucial remainder without imposing a more restrictive L\'{e}vy structural assumption as has been used in \cite{Bibinger2017}.
\subsection{Rejection rules and consistency\label{subsec:4.3}}
Based on the limit results presented in Section \ref{subsec:4.1}, we can summarize the following rejection rules.
Thereto, let $c_{\alpha}$ be the $(1-\alpha)$-quantile of the Gumbel-type limit law $\mathbb{P}_{V}$ of $V$ in the limit theorems. Since the latter is absolutely continuous with respect to the Lebesgue measure, there is a unique solution, given by
\begin{align*}
c_{\alpha}=-\log\left(-\log\left(1-\alpha\right)\right)-\frac{1}{2}\log\left(\pi\right).
\end{align*}
\begin{enumerate}
\item[(R)]Based on Theorem \ref{thm:1} and the notations used there, we
\begin{align}\label{rejectrule}
\text{reject }~ H_0\text{-}\mathfrak{a}\text{ if }~ \overline{V}_{n}\geq \an^{-1/2}\left(\left(\log(m_{n})\right)^{-1/2} c_{\alpha}+\gamma_{m_{n}}\right)\,.
\end{align}
\item[($\text{R}^{ov}$)] Based on Corollary \ref{cor:1} and the notations used there, we
\begin{align}\label{rejectrule_ov}
\text{reject }~ H_{0}\text{-}\mathfrak{a}\text{ if }~ \overline{V}^{ov}_{n}\geq \frac{\left(c_{\alpha}+2\log\left(m_{n}\right)+\frac{1}{2}\log\left(\log\left(m_{n}\right)\right)+\log\left(3\right)\right)}{\left(\log(m_{n})\an\right)^{1/2}}\,.
\end{align}
\item[($\text{R}^{\tau}$)]Based on Proposition \ref{prop:withjumps} and the notations used there, we
\begin{align}\label{rejectrule_jump}
\text{reject }~ H_{0}\text{-}\mathfrak{a}\text{ if }~ \overline{V}^{\tau}_{n}\geq \an^{-1/2}\left(\left(\log(m_{n})\right)^{-1/2} c_{\alpha}+\gamma_{m_{n}}\right)\,.
\end{align}
\item[($\text{R}^{ov,\tau}$)]Based on Proposition \ref{prop:withjumps} and the notations used there, we
\begin{align}\label{rejectrule_jump_ov}
\text{reject }~ H_{0}\text{-}\mathfrak{a}\text{ if }~ \overline{V}^{ov,\tau}_{n}\geq \frac{\left(c_{\alpha}+2\log\left(m_{n}\right)+\frac{1}{2}\log\left(\log\left(m_{n}\right)\right)+\log\left(3\right)\right)}{\left(\log(m_{n})\an\right)^{1/2}}\,.
\end{align}
\end{enumerate}
\begin{theo}\label{thm:2}Suppose Assumption \ref{assVola}, Assumption \ref{assnoise}, and Assumption \ref{assjumps} with \eqref{resjumps} in the case with jumps.
The decision rules \eqref{rejectrule}, \eqref{rejectrule_ov}, \eqref{rejectrule_jump} and \eqref{rejectrule_jump_ov} provide consistent tests to distinguish the null hypothesis \ref{hypo} from the alternative hypothesis \ref{alt} for the testing problem \eqref{hyptest}.
\end{theo}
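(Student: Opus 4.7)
The plan is to prove the two parts of consistency separately. Asymptotic level $\alpha$ under \ref{hypo} is immediate, since the thresholds in \eqref{rejectrule}--\eqref{rejectrule_jump_ov} are by construction the rescaled $(1-\alpha)$-quantiles $c_\alpha$ of the Gumbel-type limit law in Theorem~\ref{thm:1}, Corollary~\ref{cor:1} and Proposition~\ref{prop:withjumps}. The substantive work is to show that the power tends to one under \ref{alt}, which I would do by separately establishing that (a)~the rejection threshold tends to zero and (b)~the test statistic is bounded below by a strictly positive constant in probability.

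For (a), the thresholds in \eqref{rejectrule}, \eqref{rejectrule_ov}, \eqref{rejectrule_jump} and \eqref{rejectrule_jump_ov} all scale like $\sqrt{\log m_n/\alpha_n}$, and the condition $h_n^{-\varpi}/\alpha_n\to 0$ from \eqref{eq:an2} forces $\alpha_n$ to grow faster than any power of $\log n$, so the threshold is $o(1)$. For (b), I would fix a jump time $\theta \in (0,1)$ with $|\Delta\sigma^2_\theta|=\delta>0$ and select the index $i_n=\lfloor \theta/(\alpha_n h_n)\rfloor$ so that the big block $[i_n\alpha_n h_n,(i_n+1)\alpha_n h_n]$ contains $\theta$ while $[(i_n+1)\alpha_n h_n,(i_n+2)\alpha_n h_n]$ lies strictly to its right. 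Decomposing $\sigma^2=\sigma^{2,(c)}+\sigma^{2,(j)}$ as in \ref{alt}, the continuous component contributes a difference of order $L_n(\alpha_n h_n)^{\mathfrak{a}}=o(1)$ (by \ref{hypo} and \eqref{eq:an2}) between the two adjacent block averages, whereas the jump component contributes an amount $c\,\Delta\sigma^2_\theta$ for some $c\in(0,1]$ determined by the position of $\theta$ within the block, with no asymptotically compensating opposite jump thanks to the pure-jump semimartingale assumption on $\sigma^{2,(j)}$. Combined with consistency of the adaptive noise-robust spot estimator already used for Theorem~\ref{thm:1}, this yields $\overline{RV}^{ad}_{n,i_n}-\overline{RV}^{ad}_{n,i_n+1}\xrightarrow{\mathbb{P}} -c\,\Delta\sigma^2_\theta$, while the denominator $\sqrt{8\hat\eta}\,|\overline{RV}^{ad}_{n,i_n+1}|^{3/4}$ is tight and bounded away from $0$ by Assumption~\ref{assVola}; hence the $i_n$-th entry of the maximum lower-bounds $\overline{V}_n$ by a strictly positive constant in probability. (If $c$ degenerates because $\theta$ lies near the left boundary of the block, the same lower bound applies to the neighbouring pair $(i_n-1,i_n)$, and one simply takes the better of the two.) For the truncated statistics one additionally needs $\mathbbm{1}_{\{|\hat\sigma^{2,ad}_{(\ell-1)h_n}|\le h_n^{\tau-1}\}}=1$ with probability tending to one on the two blocks of interest, which is immediate since $h_n^{\tau-1}\to\infty$ while $\hat\sigma^{2,ad}$ concentrates on the bounded range implied by Assumption~\ref{assVola}; the overlapping versions are handled by an analogous choice of shifted index.

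The main obstacle I anticipate is the single bin of length $h_n$ that straddles $\theta$ inside the first block: on that bin the local estimator $\hat\sigma^{2,ad}_{(k-1)h_n}$ is inconsistent and leaves an $O_{\mathbb{P}}(1)$ residual. The remedy is that there is exactly one such bin per block, so its contribution to the average $\overline{RV}^{ad}_{n,i}$ is diluted to order $\alpha_n^{-1}=o(1)$. Making this bookkeeping rigorous while keeping the lower bound uniform inside the maximum over $O(h_n^{-1})$ candidate pairs reuses the uniform spot-estimator control already developed in the proof of Theorem~\ref{thm:1} and requires no new techniques.
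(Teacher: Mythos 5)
Your proposal is correct and follows essentially the same route as the paper's proof: both decompose the task into showing the threshold tends to zero (via $\gamma_{m_n}=O(\sqrt{\log n})$ versus $\alpha_n\to\infty$) and bounding the statistic below by decomposing $\sigma^2=\sigma^{2,(c)}+\sigma^{2,(j)}$, exploiting regularity under \ref{hypo} for the continuous part, the pure-jump semimartingale structure to rule out compensating jumps, and uniform stochastic-fluctuation control inherited from Theorem~\ref{thm:1}. The paper's explicit choice of $i^*$ (switching index based on whether $\theta$ lies in the left or right half of the block) pins the constant $c$ at $\ge 1/2$, which is exactly your "take the better of the two pairs" remark made precise; the straddling-bin concern you raise is a non-issue in the paper's formulation since it separates the statistic into its expectation and a centered remainder, so consistency of the single straddling bin estimator is never invoked.
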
 
Consistency of the test means that under the alternative hypothesis, if for some $\theta\in\left(0,1\right)$ we have that $\big|\sigma^{2}_{\theta}-\sigma^{2}_{\theta-}\big|=\delta>0$ for some fix $\delta>0$, the power of the test, for instance by \eqref{rejectrule}, tends to one as $n\rightarrow \infty$:
\[\P_{H_1}\big(\overline{V}_{n}\geq \an^{-1/2}\big(\left(\log(m_{n})\right)^{-1/2} c_{\alpha}+\gamma_{m_{n}}\big)\big)\stackrel{n\rightarrow\infty}\longrightarrow 1\,.\]
Theorem \ref{thm:1} ensures that \eqref{rejectrule} facilitates an asymptotic level-$\alpha$-test that correctly controls the type 1 error, that is,
\[\P_{H_0}\big(\overline{V}_{n}\geq \an^{-1/2}\big(\left(\log(m_{n})\right)^{-1/2} c_{\alpha}+\gamma_{m_{n}}\big)\big)\stackrel{n\rightarrow\infty}\longrightarrow \alpha\,.\]
Thereby, even for small $\aalpha>0$, the test can distinguish continuous volatility paths from paths with jumps. 
\begin{rem}The rate $\sqrt{\log(m_{n})\an}$ in \eqref{eq:limitnov}, \eqref{eq:limitov}, \eqref{eq:limitnovj} and \eqref{eq:limitovj} determines how fast the power of the test increases in the sample size $n$. The convergence rate, for $\an$ close to the upper bound in \eqref{eq:an2}, is close to $n^{\aalpha/(4\aalpha+2)}$. The latter coincides with the optimal convergence rate for spot volatility estimation under noise, see \cite{munk2010}. In light of the lower bound for the testing problem without noise established in \cite{Bibinger2017} and the relation of the models with and without noise studied in \cite{Gloter2001}, we conjecture that the above test yields an asymptotic minimax-optimal decision rule. A formal generalization of the proof for the detection boundary from Theorem 4.1 of \cite{Bibinger2017} to our setting however appears not to be feasible, since it heavily exploits simple $\chi^2$-approximations of squared increments. 
\end{rem}
\subsection{Consistent estimation of the change point\label{subsec:4.4}}In this subsection, we present an estimator for the change point $\theta$, which is of importance, once we have decided to reject \ref{hypo}. Therefore, we suppose \ref{alt} and that there exists one $\theta\in(0,1)$ with $|\Delta\sigma^2_{\theta}|>0$. The aim is to estimate $\theta$, in general referred to as the change point or break date in change-point statistics, which here gives the time of the volatility jump. We suggest the estimator $\hat{\theta}_{n}$, given by
\begin{align}
\hat{\theta}_{n}=\hn\operatorname{argmax}_{i=\an,\ldots, \hn^{-1}-\an} \overline{V}_{n,i}^{\diamond}\label{hattheta}
\end{align}
where
\begin{align*}
\overline{V}_{n,i}^{\diamond}=\an^{-1/2}\left|\sum_{\ell=i-\an+1}^{i}\hat{\sigma}^{2,ad}_{(\ell-1)\hn}-\sum_{\ell=i+1}^{i+\an}\hat{\sigma}^{2,ad}_{(\ell-1)\hn}\right|\,.
\end{align*}
It is sufficient to use these modified non-rescaled versions of the statistics in \eqref{eq:mainVno}. We prove the following consistency result for our estimator.
\begin{prop}\label{propCP}
Given the assumptions of Theorem \ref{thm:1}, that is, Assumptions \ref{assVola} and \ref{assnoise}, $J_t\equiv 0$ and $\an$ satisfies \eqref{eq:an2}, and assume that \ref{alt} applies with one jump time  $\theta\in(0,1)$. For $\Delta\sigma^2_{\theta}=\delta\ne 0$, it holds that
\begin{align*}
\big|\hat{\theta}_{n}-\theta\big|=\mathcal{O}_{\P}\big(h_n|\delta^{-1}|\sqrt{\an\log(n)}\big)\,.
\end{align*}
In particular, $\hat{\theta}_{n}\overset{\mathbb{P}}{\longrightarrow}\theta\,.$
\end{prop}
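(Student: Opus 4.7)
The plan is to decompose $\overline{V}_{n,i}^{\diamond}$ into a deterministic \emph{signal} $S_{n,i}$ obtained by replacing each adaptive spot volatility estimate $\hat\sigma^{2,ad}_{(\ell-1)\hn}$ by the true $\sigma^2_{(\ell-1)\hn}$, and a stochastic \emph{error} $\tilde N_{n,i}$ collecting the corresponding differences of estimation errors, so that $\overline{V}_{n,i}^{\diamond}=|S_{n,i}+\tilde N_{n,i}|$. The estimator $\hat\theta_n$ will then concentrate around $\theta$ by balancing a jump-driven lower bound on $|S_{n,i}|$ at the correct location against a uniform extreme-value bound for $\tilde N_{n,i}$.

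Write $j^\ast=\lfloor \theta/\hn\rfloor$ for the bin index containing $\theta$ and $\hat i_n=\hat\theta_n/\hn$ for the argmax in \eqref{hattheta}. Under \ref{alt} with a single jump of size $\delta$ of $(\sigma^{2,(j)})$ on $(0,1)$ at $\theta$, counting how many grid points $(\ell-1)\hn$ of each block lie past $\theta$ and controlling the oscillation of $(\sigma^{2,(c)})$ via the modulus-of-continuity bound from \ref{hypo} yields, for $|i-j^\ast|\leq \an$,
\[|S_{n,i}|=\sqrt{\an}\,|\delta|\,\Big(1-\tfrac{|i-j^\ast|}{\an}\Big)+\rho_{n,i},\qquad \max_i|\rho_{n,i}|=\mathcal{O}\!\big(\sqrt{\an}\,L_n(\ah)^{\aalpha}\big),\]
which is of smaller order than $\sqrt{\log m_n}$ by \eqref{eq:an2} and $L_n=\mathcal{O}(\log n)$. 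For $|i-j^\ast|>\an$ both blocks lie on the same side of $\theta$, so $|S_{n,i}|$ is of this same small order.

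The error $\tilde N_{n,i}$ is precisely the unnormalised overlapping difference of adaptive spot volatility estimates that, after division by $\sqrt{8\hat\eta}\,|\overline{RV}^{ad}_{n,i+\an}|^{3/4}$, produces the statistic $\overline{V}^{ov}_{n}$ in \eqref{eq:mainVno}. Recycling the four approximation steps sketched in Section \ref{subsec:4.2} --- replacement of $S_{jk}(Y)$ by $\sigma_{\cdot}S_{jk}(W)+S_{jk}(\varepsilon)$, the local time shift of $\sigma$, the handling of the variance normalisation, and the passage from oracle to adaptive weights --- reduces $\max_i|\tilde N_{n,i}|$ to the extreme-value quantity analysed in Corollary \ref{cor:1} and delivers $\max_i|\tilde N_{n,i}|=\mathcal{O}_{\mathbb{P}}(\sqrt{\log m_n})$.

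The conclusion then follows by an argmax comparison. From $\overline{V}_{n,\hat i_n}^{\diamond}\geq \overline{V}_{n,j^\ast}^{\diamond}$ and the elementary bounds $|S+\tilde N|\leq|S|+|\tilde N|$, $|S+\tilde N|\geq|S|-|\tilde N|$,
\[|S_{n,j^\ast}|-|S_{n,\hat i_n}|\leq 2\max_i|\tilde N_{n,i}|=\mathcal{O}_{\mathbb{P}}\!\big(\sqrt{\log m_n}\big).\]
If $|\hat i_n-j^\ast|\geq \an$, the left-hand side is at least $\sqrt{\an}|\delta|-o_{\mathbb{P}}(\sqrt{\log m_n})$, which cannot hold eventually since $\an/\log n\to\infty$ by \eqref{eq:an2}. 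Otherwise the gap equals $\sqrt{\an}|\delta|\cdot|\hat i_n-j^\ast|/\an$ up to the negligible $\rho_{n,\cdot}$, whence $|\hat i_n-j^\ast|=\mathcal{O}_{\mathbb{P}}(|\delta|^{-1}\sqrt{\an\log n})$, and multiplication by $\hn$ gives the announced rate. The main obstacle is the uniform bound on $\tilde N_{n,i}$: the entire approximation chain behind Corollary \ref{cor:1} must be re-examined on a volatility path that jumps at $\theta$; the point is that this jump contributes through only an $\mathcal{O}(\an)$-range of indices around $j^\ast$, where a direct worst-case bound (using Assumption \ref{assVola} and the boundedness of $\delta$) suffices, while on the remaining $m_n-\mathcal{O}(\an)$ indices the Hölder regularity of $(\sigma^{2,(c)})$ from \ref{hypo} is in force and the extreme-value argument carries through unchanged.
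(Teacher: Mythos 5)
Your proposal is correct and follows essentially the same strategy as the paper's proof: decompose $\overline{V}_{n,i}^{\diamond}$ into a deterministic jump-driven triangle signal plus negligible continuous-volatility and estimation-error terms (the paper's $D_{n,i}$, $C_{n,i}$, and $A_{n,i}+B_{n,i}$ respectively), bound the noise uniformly at rate $\mathcal{O}_{\P}(\sqrt{\log m_n})$ by recycling the approximation chain behind Theorem~\ref{thm:1}, and locate the argmax by comparing the signal's decay away from $\theta$ against the uniform noise level. The only cosmetic difference is that you run the argmax comparison directly via $\overline{V}_{n,\hat i_n}^{\diamond}\geq\overline{V}_{n,j^\ast}^{\diamond}$ and the triangle formula for $|S_{n,i}|$, whereas the paper packages the same inequality as a separate elementary lemma (Lemma~\ref{lem_argmax_bound}) applied once on each side of $\theta$; both routes deliver the stated rate.
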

\begin{rem}
Put another way, we can detect jump times associated with sequences of jump sizes $\delta_n\to 0$ as $n\to+\infty$ as long as $h_n^{-1}(\an\log(n))^{-1/2}=\KLEINO(\delta_n)$ in the sense of weak consistency. Choosing $\an$ as small as possible, such that \eqref{eq:an2} is satisfied, yields the best possible rate, while for the testing problem in Theorem \ref{thm:1} we select $\an$ as large as possible. In the optimal case, a jump with fix size $\delta\ne 0$ can be detected with a convergence rate close to $h_n^{-1}$. This provides important information how precisely volatility jump times can be located under noisy observations. With jumps in $(X_t)$, we conjecture that an analogous results holds true under the conditions of Proposition \ref{prop:withjumps}. A sequential application of our methods allows for testing and the estimation of multiple change points. The extension of the estimation from the one change to the multiple change-point alternative is accomplished similarly to Algorithm 4.9 from paragraph 4.2.2.\ in \cite{Bibinger2017}.
\end{rem}

\section{Simulations and a bootstrap adjustment\label{sec:5}}
In this section we investigate the finite-sample performance of the new method in a simulation study. We also analyze the efficiency gains of our noise-robust approach based on the spectral volatility estimation methodology in comparison to simply skip sampling the data and applying the non noise-robust method from \cite{Bibinger2017}. Skip sampling the data, which means we only consider every 60th datapoint, reduces the dilution by the noise and is a standard way to deal with high-frequency data in practice. We consider $n=30,000$ observations of \eqref{smn}, a typical sample size of high-frequency returns over one trading day. The noise is centered and normally distributed with a realistic magnitude, $\eta=0.005$, see, for instance, \cite{BHMR}.
We implement the same volatility model as in Section 5 of \cite{Bibinger2017}, where
\begin{align}\label{volasim2}\sigma_t=\left(\int_0^t c\cdot\rho \,dW_s+\int_0^t \sqrt{1-\rho^2}\cdot c \,dW_s^{\bot}\right)\cdot v_t\,\end{align}
is a semimartingale volatility process fluctuating around the seasonality function
\begin{align}\label{volasim}v_t=1-0.2\sin\big(\tfrac{3}{4}\pi\,t),\,t\in[0,1]\,,\end{align}
\begin{figure}
\fbox{
\includegraphics[width=6.75cm]{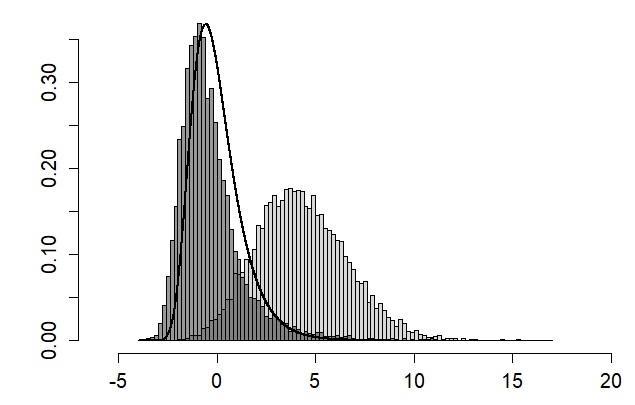}~\includegraphics[width=6.75cm]{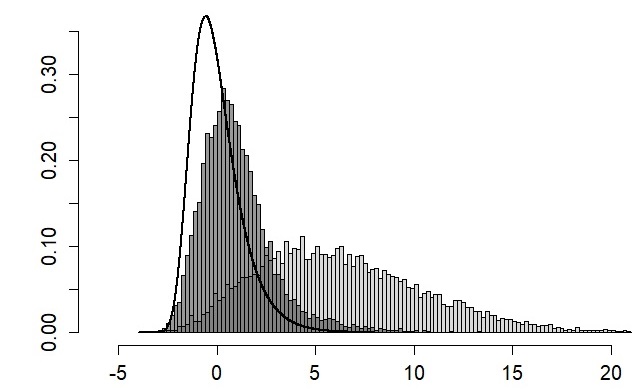}}
\caption{\label{Fig:1}Left: Histogram of statistics left-hand side in \eqref{eq:limitovj} for $h_n^{-1}=120$ and $\alpha_n=15$, $n=30,000$, under null hypothesis and alternative hypothesis and limit law density marked by the line. Right: Histogram of corresponding not noise-robust statistics from \cite{Bibinger2017} applied after (most efficiently) skip sampling to a subset with $n_{skip}=500$ observations, $k_n=125$, and limit law density marked by the line.}
\end{figure}
\hspace*{-.15cm}where $c=0.1$ and $\rho=0.5$, with $W^{\bot}$ a standard Brownian motion independent of $W$. We set $X_0=4$ and the drift $a=0.1$. We perform the simulations in R using an Euler-Maruyama discretization scheme.\\

\subsection{Performance of the test, comparison to skip sampling, bootstrap adjustment and sensitivity analysis\label{subsec:5.1}}
Concerning the jumps of $(X_t)$ and $(\sigma_t)$ under the alternative hypothesis, we implement two different model configurations. In order to grant a good comparison to \cite{Bibinger2017} in the evaluation of the efficiency gains by our method instead of a skip-sample approach, we adopt in Section \ref{subsec:5.1} the setup from Section 5 of \cite{Bibinger2017}. There, under the alternative hypothesis, the volatility admits one jump of size $0.2$ at time $t=2/3$. The jump size equals the range of the expected continuous movement. Under the alternative hypothesis, $(X_t)$ admits a jump at the same time $t=2/3$. Under the null hypothesis and the alternative hypothesis, $(X_t)$ also jumps at some uniformly drawn time. All price jumps are normally distributed with expected size $0.5$ and variance $0.1$. More general jumps are considered in Section \ref{subsec:5.2}.

We consider the test statistic \eqref{testo} with overlapping blocks and truncation. Section \ref{subsec:5.2} confirms that it outperforms the non-overlapping version \eqref{test}. We set $h_n^{-1}=120$ and $\an=15$. Robustness with respect to different choices of $h_n$ and $\an$ is discussed below. For the truncation, we set $\tau=3/4$ according to Remark \ref{remjumps}. In all cases, we compute the adaptive feasible statistics and do not make use of the generated volatility paths to derive the weights \eqref{weights}. We rather rely on the two-stage method and insert \eqref{pilotsigmahat} with $J=20$ and \eqref{etahat} in the statistics. The spectral estimates from \eqref{sigmahat} are computed as sums up to the spectral cut-off $J_n=50$, smaller than $\lfloor nh_n\rfloor -1=254$, as the fast decay of the weights \eqref{weights} in $j$, compare also \eqref{wOrder}, renders higher frequencies completely negligible. The investigated test statistics will be identically feasible in data applications.
 
Figure \ref{Fig:1} visualizes the empirical distribution from $10,000$ Monte Carlo iterations under the null hypothesis and the alternative hypothesis. The left plot shows our statistics while the right plot gives the results for the statistics from \cite{Bibinger2017} applied to a skip sample of 500 observations. The skip-sampling frequency has been chosen to maximize the performance of these statistics. While they are reasonably robust to minor modifications, too large samples lead to an explosion of the statistics also under the null hypothesis and much smaller samples result in poor power. The length of the smoothing window $k_n$ for the statistics given in Equation (24) of \cite{Bibinger2017} is set $k_n=125$, adopted from the simulations in \cite{Bibinger2017}. In the optimal case, null and alternative hypothesis are reasonably well distinguished by the skip-sampling method -- but the two plots confirm that our approach improves the finite-sample power considerably. For the spectral approach, $88\%$ of the outcomes under $H_1$ exceed the $90\%$-decile of the empirical distribution under $H_0$. For the optimized skip-sample approach this number reduces to $75\%$. The approximation of the limit law appears somewhat imprecise. The relevant high quantiles, however, fit their empirical counterparts quite well.\\
Nevertheless, we propose a bootstrap procedure to fit the distribution of $\overline{V}^{ov,\tau}_{n}$ under $H_0$ with improved finite-sample accuracy. We start with an estimator for the spot volatility $\overline{RV}_{n,i}^{tr},\quad i=\an,\ldots,h_n^{-1}$, from $\eqref{rvtr}$, using the same $h_n^{-1}$ and $\an$ as for the test. We also define and compute $\overline{RV}_{n,i}^{tr},\quad i=1,\ldots,\an-1$, averaging over the available number of blocks, smaller than $\an$, back in time. In order to smooth the random fluctuations of the spot volatility pre-estimates, we apply a filter to the estimates of length 30 with equal weights and denote $\tilde{\sigma}_{n,i}^2,\quad i=1,\ldots,h_n^{-1}$, the resulting estimated volatility path. At the boundaries we interpolate linearly to $\overline{RV}_{n,1}^{tr}$ and $\overline{RV}_{n,h_n^{-1}}^{tr}$, respectively. Repeating each entry $nh_n=250$ times, we obtain a (bin-wise constant) estimator $\tilde{\sigma}_{n,i}^2,\quad i=1,\ldots,n$. For two sequences of i.i.d.\ standard normals $\{Z_i\}_{1 \leq i \leq n},\{E_i\}_{1 \leq i \leq n}$, and $X_0^*=Y_0^*=Y_0$, denote with
\[X_i^*=X_{i-1}^*+\sqrt{\frac{\tilde{\sigma}_{n,i}^2}{n}}\cdot Z_i~,Y_i^*=X_i^*+\hat\eta \cdot E_i~,1\le i\le n\,,\]
a pseudo path $Y^*$ generated with the estimated volatility path and estimated noise variance and the $(Z_i,E_i)$. We can iterate the procedure as a Monte Carlo simulation and produce $N=10,000$ different pseudo paths $Y^*$ using independent generalizations of random variables $(Z_i,E_i)$. With
\begin{align*}
\hat{\sigma}^{2*}_{\left(k-1\right)h_{n}}&=\sum_{j=1}^{\nh}\hat w_{jk}\left(S^{2}_{jk}\left(Y^*\right)-\left[\varphi_{jk},\varphi_{jk}\right]_{n}\frac{\hat\eta^{2}}{n}\right)\,,\\
\overline{RV}_{n,i}^{*}&=\frac{1}{\an}\sum_{\ell=i-\an+1}^{i}\hat{\sigma}^{2*}_{\left(\ell-1\right)\hn}\mathbbm{1}_{\{|\hat{\sigma}^{2*}_{\left(\ell-1\right)\hn}|\le h_n^{\tau-1}\}},\quad i=\an,\ldots,h_n^{-1}\,,
\end{align*}
we derive the pseudo test statistic
\[\hat{V}_n^{\dagger}=\mathop{\mathrm{max}}\limits_{i=\an,\ldots,h_n^{-1}-1}\left|\frac{\overline{RV}_{n,i}^{*}-\overline{RV}_{n,i+1}^{*}}{\sqrt{8\hat\eta}\;\big|\overline{RV}^{*}_{n,i+1}\big|^{3/4}}\right|\,.\]
In fact, the truncation with the indicator function is obsolete, since we do not have jumps in the pseudo samples. For a test, we can use the approximative (conditional) quantiles 
\begin{align*}
\hat{q}_{\alpha}(\hat{V}_n^{\dagger}|\F) = \inf\bigl\{x \geq 0\, : \, \P(\hat{V}_n^{\dagger} \leq x | \F) \geq \alpha \bigr\}
\end{align*}
and compute $\hat{q}_{\alpha}(\hat{V}_n^{\dagger}|\F)$ based on Monte Carlo approximation. We reject $H_0$ when
\[\overline{V}^{ov,\tau}_{n}> \hat{q}_{1-\alpha}(\hat{V}_n^{\dagger}|\F)\,.\]

\begin{figure}
\fbox{
\includegraphics[width=6.75cm]{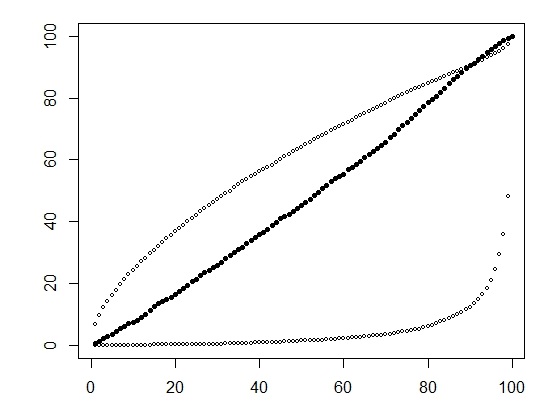}~\includegraphics[width=6.75cm]{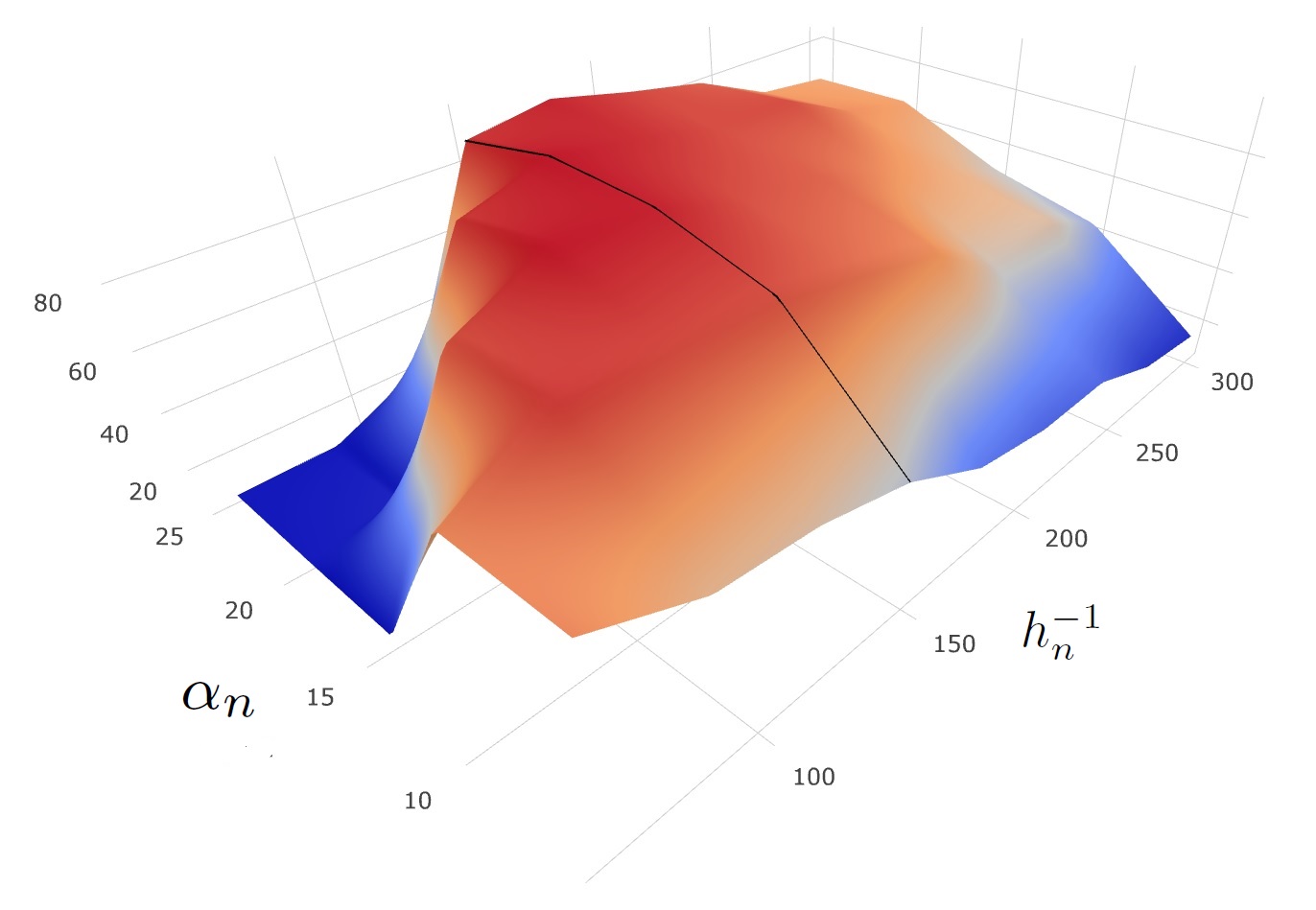}}
\caption{\label{Fig:2}Left: Empirical size and power of the new test for $h_n^{-1}=120$ and $\alpha_n=15$, $n=30,000$, by comparing empirical percentiles to ones of limit law under $H_0$ and $H_1$ (light points). Empirical percentiles compared to bootstrapped percentiles under $H_0$ (dark points). 
Right: Percentage of exceedances under $H_1$ of the $90\%$ empirical quantile under $H_0$ for $h_n^{-1}=60,90,120,150,180,210,240,270,300$ and $\an=5,10,15,20,25$. The line gives the marginal curve for $h_n^{-1}=120$.}
\end{figure}

In the left plot of Figure \ref{Fig:2} the black dots compare the empirical percentiles of the left-hand side in \eqref{eq:limitovj}, the standardized versions of $\overline{V}^{ov,\tau}_{n}$, under $H_0$ to the ones of the bootstrap, i.e. $\hat{q}_{\alpha}(\hat{V}_n^{\dagger}|\F)$. The finite-sample accuracy of the bootstrap for the distribution under $H_0$ is significantly better than the limit law (light points). Since the high percentiles of bootstrap and limit law are quite close, the power of both tests is comparable. For a level $\alpha=10\%$ test, we obtain approx.\,$88\%$ power using the limit law and $89\%$ power using the bootstrap. For a level $\alpha=5\%$ test, we obtain approx.\,$79\%$ and $75\%$, respectively. 

Finally, we consider different parameter configurations $(h_n^{-1},\an)$. Since we can exploit the bootstrap to ensure a good fit under $H_0$, we concentrate on the ability of $\overline{V}^{ov,\tau}_{n}$ to distinguish hypothesis and alternative. To quantify the ability to separate $H_0$ and $H_1$, we visualize the relative number of exceedances under $H_1$ of the $90\%$ empirical quantile under $H_0$. We plot the percentage numbers in the right plot of Figure \ref{Fig:2} over a grid of different values for $(h_n^{-1},\an)$. Additionally, we draw the marginal curve for $h_n^{-1}=120$ at points $\an=5,10,15,20,25$ (black line). Choosing a different (reasonable) quantile under $H_0$ does not change the shape of the surface with respect to the values of $(h_n^{-1},\an)$. Figure \ref{Fig:2} confirms that the test is reasonably robust with respect to different values of the tuning parameters. For $h_n^{-1}$ sufficiently large, setting $\an$ between $15$ and $20$ yields the highest power. For $h_n^{-1}$, values between 60 and 180 grant a good performance. Hence, we choose $h_n^{-1}=120$ and $\an=15$ as suitable configuration for the simulation study. 
\subsection{Comparison of tests with overlapping and non-overlapping statistics\label{subsec:5.2}}
We illustrate the improvement in the power of the test based on \eqref{testo} compared to the non-overlapping version \eqref{test}. Here, we use a prominent general model for jumps of $(X_t)$ often considered in related literature, including \cite{jacodtodorov}, with a predictable compensator \(\nu(ds,dz)=(\1_{\{z\in[-1,-0.2]\cup[0.2,1]\}})/1.6\,dt\,dz\). Since jumps of very small absolute sizes are not generated, the truncation works well and we do not see a manipulation of the empirical distribution of the test statistics due to errors in the truncation step. We investigate the power of the tests for different volatility-jump sizes under the alternative, $\Delta\sigma^2_{\theta}=(10+5\cdot i)/100,i=1,\ldots,7$. The volatility-jump time $\theta$ is randomly generated in each run according to a uniform distribution on $(\an h_n,1-\an h_n)$. Note that not excluding the boundary intervals $[0,\an h_n]\cup [1-\an h_n,1]$ would slightly reduce the power in all configurations, since the test is not able to detect jumps in these boundary blocks. In order to include common price and volatility jumps, we add an additional price jump at $\theta$ with uniformly distributed size as according to $\nu$ above. We keep to the parameters $h_n^{-1}=120$, $\an=15$ and $\tau=3/4$ and compute the adaptive statistics as in Section \ref{subsec:5.1} in $10,000$ iterations.

\begin{figure}
\begin{center}
\fbox{
\includegraphics[width=13cm]{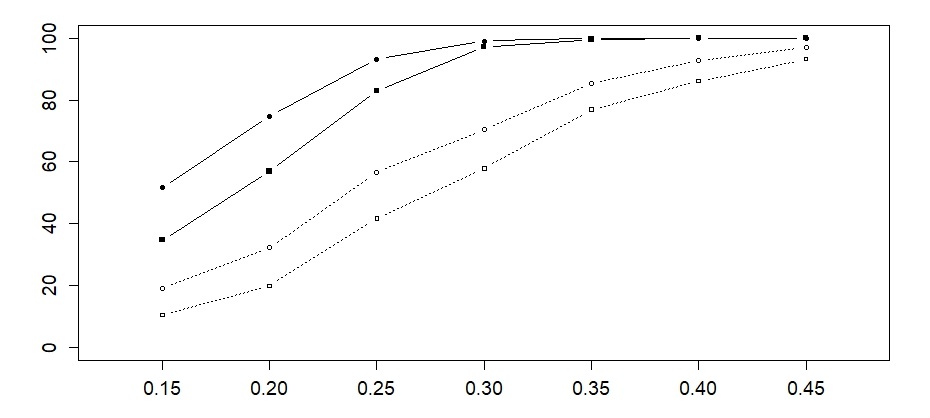}}
\caption{\label{Fig:3}Empirical power of the test based on \eqref{testo} with overlapping statistics (dark, solid) and \eqref{test} with non-overlapping statistics (light, dashed) for the level 10\% (points) and 5\% (squares) under the alternative hypothesis as function of the volatility-jump size $\Delta\sigma^2_{\theta}$. The plot gives empirical percentiles exceeding the bootstrapped percentiles under the null hypothesis for $h_n^{-1}=120$, $\alpha_n=15$ and $n=30,000$.}
\end{center}\end{figure}

Figure \ref{Fig:3} confirms that the test using \eqref{testo} with overlapping statistics has a significantly higher power than the test based on \eqref{test} and non-overlapping statistics. The largest difference for $\Delta\sigma^2_{\theta}=0.2$ is 17.8\% at 10\% testing level and for $\Delta\sigma^2_{\theta}=0.25$, 14.8\% at 5\% testing level. Thus, for volatility jumps with moderate absolute size in the range considered in Figure \ref{Fig:3}, the overlapping statistics attain relevant efficiency gains. The location of the volatility jump -- when the boundaries are excluded -- does not affect the power of the tests. Figure \ref{Fig:3} illustrates increasing power of both tests as $\Delta\sigma^2_{\theta}$ gets larger. It also reveals that volatility jumps with $\Delta\sigma^2_{\theta}\le 0.15$ are difficult to detect in our setting where this corresponds only to approximately 20 times the average absolute increment $|\Delta_i^n Y|$. Due to the required smoothing over blocks we cannot expect to detect such small volatility jumps with good power. We can further report a better accuracy of the theoretical limit law under $H_0$ from Proposition \ref{prop:withjumps} for the empirical distribution of the statistics with overlapping compared to non-overlapping blocks. The average amount of realizations of simulated statistics \eqref{test} exceeding the theoretical 90\%-percentile is 9.99\% and exceeding the 95\%-percentile 6.41\%. For the statistics \eqref{testo} these values are 21.00\% and 11.11\%, respectively.
\section{Proofs}\label{sec:6}
\subsection{Proof of Theorem \ref{thm:1}}
For notational convenience, we replace
\begin{align*}
\mathop{\mathrm{max}}\limits_{i=0,\ldots,{\lfloor \left(\ah\right)^{-1}\rfloor}-2}\quad\text{by}\quad\maxi,
\end{align*}
and
\begin{align*}
\mathop{\mathrm{min}}\limits_{i=0,\ldots,{\lfloor \left(\ah\right)^{-1}\rfloor}-2}\quad\text{by}\quad\mini,
\end{align*}
respectively.\\
We also introduce the following notation, adapting the elements of the spectral statistics on each big block.
Set
\begin{align*}
\Phi_{ij\ell}\left(t\right)&=\Phi_{j}\left(t-\left(\ahlla\right)\right)
\end{align*}
and
\begin{align*}
\varphi_{ij\ell}\left(t\right)&=\varphi_{j}\left(t-\left(\ahlla\right)\right)\,.
\end{align*}
Furthermore, we define the big block-wise spectral statistics 
\begin{align*}
S_{ij\ell}\left(L\right)=\sum_{\nu=1}^{n}\Delta_{i}^{n}\Phi_{ij\ell}\left(\frac{\nu}{n}\right)
\end{align*} and the associated variance minimizing oracle weights
\begin{align*}
w_{ij\ell}=\frac{\left(\sigma^{2}_{\ahlla}+\frac{\eta^{2}}{n}\left[\varphi_{ij\ell},\varphi_{ij\ell}\right]_{n}\right)^{-2}}{\sum_{j=1}^{\nh}\left(\sigma^{2}_{\ahlla}+\frac{\eta^{2}}{n}\left[\varphi_{ij\ell},\varphi_{ij\ell}\right]_{n}\right)^{-2}}\,.
\end{align*}
We further introduce the bias correction terms
\[\mu_{ij\ell}=[\varphi_{ij\ell},\varphi_{ij\ell}]_n\;\frac{\eta^2}{n}\,.\] 
We can strengthen the assumptions, presented in Assumption \ref{assVola} and \ref{hypo}, as follows. We replace local boundedness of $\left(\sigma_{t}\right)_{t\in\left[0,1\right]}$, $\left(a_{t}\right)_{t\in\left[0,1\right]}$, and the modulus of continuity $(w_\delta(\sigma)_t)_{t\in\left[0,1\right]}$ under \ref{hypo} by global boundedness. We refer to Section
4.4.1 of \cite{JP} for a proof and the construction through localization. We set
\begin{align*}
\overline{U}_{n}=\mathop{\mathrm{max}}\limits_{i=0,\ldots,{\lfloor \left(\ah\right)^{-1}\rfloor}-2}\left|\frac{\overline{Z}_{n,i}-\overline{Z}_{n,i+1}}{\big|\overline{Z}_{n,i+1}\big|^{3/4}}\right|\,,
\end{align*}
with 
\begin{align}\label{eq:Zni}
\overline{Z}_{n,i}:=\frac{1}{\an}\sum_{\ell=1}^{\an}\sum_{j=1}^{\nh}w_{ij\ell}\left(\left(\sigma_{i\ah}S_{ij\ell}\left(W\right)+S_{ij\ell}\left(\varepsilon\right)\right)^{2}-\mu_{ij\ell}\right).
\end{align}
Finally we fix some constants $\Kp, \Km>0$, such that almost surely
\begin{align*}
\Km<\inf_{t\in[0,1]}\sigma^{2}_{t}\quad\text{and}\quad\Kp>\sup_{t\in[0,1]}\sigma^{2}_{t}\,.
\end{align*}
The \emph{first step} described in Section \ref{subsec:4.2} is accomplished in the next proposition.
\begin{prop}\label{prop1}
Given the assumptions of Theorem \ref{thm:1}, it holds under \ref{hypo} that
\begin{align*}
\sqrt{\an\log\left(h_n^{-1}\right)}\maxi\left|\bigg|\frac{\Ri-\Rii}{\absRii^{3/4}}\bigg|-\bigg|\frac{\Zi-\Zii}{\absZii^{3/4}}\bigg|\right|\overset{\mathbb{P}}{\longrightarrow}0\,.
\end{align*}
\end{prop}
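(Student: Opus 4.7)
After the standard localization (Section 4.4.1 of \cite{JP}), we may assume global boundedness of $\sigma$, $a$ and $\KK_n$. The plan is to decompose the target quantity via the triangle inequality and then control each piece by a spectral-approximation estimate together with uniform lower bounds on the denominators. Since $\bigl||x|-|y|\bigr|\le|x-y|$, the algebraic identity
\[
\frac{\Ri-\Rii}{\absRii^{3/4}}-\frac{\Zi-\Zii}{\absZii^{3/4}}=\frac{(\Ri-\Zi)-(\Rii-\Zii)}{\absRii^{3/4}}+(\Zi-\Zii)\cdot\frac{\absZii^{3/4}-\absRii^{3/4}}{\absRii^{3/4}\absZii^{3/4}}
\]
reduces the task to three uniform estimates: (i) $\maxi|\Ri-\Zi|=\KLEINO_{\P}\bigl((\an\log(h_n^{-1}))^{-1/2}\bigr)$, (ii) $\maxi|\Zi-\Zii|=\mathcal{O}_{\P}\bigl(\sqrt{\log(h_n^{-1})/\an}\bigr)$, and (iii) $\mini\absRii,\mini\absZii\ge\Km/2$ with probability tending to one.

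The heart of the argument is (i). Expanding $S_{ij\ell}(Y)=\sigma_{i\ah}\SW+\Se+R_{ij\ell}$, where $R_{ij\ell}$ is the spectral statistic applied to $\int_0^\cdot(\sigma_s-\sigma_{i\ah})\,dW_s+\int_0^\cdot a_s\,ds$, we obtain
\[
\Ri-\Zi=\frac{1}{\an}\sum_{\ell=1}^{\an}\sum_{j=1}^{\nh}\w\bigl(2(\sigma_{i\ah}\SW+\Se)R_{ij\ell}+R_{ij\ell}^{2}\bigr).
\]
The regularity bound from \ref{hypo} gives $|\sigma_s-\sigma_{i\ah}|\le\KK_n(\ah)^{\aalpha}$ on each big block, so the Burkholder--Davis--Gundy inequality together with the $L^{2}$-normalization of $\Phi_{ij\ell}$ produces $\mathbb{E}[R_{ij\ell}^{2p}\mid\mathcal{F}^{(0)}]\le C_p\bigl(\KK_n^{2}(\ah)^{2\aalpha}+(\ah)^{2}\bigr)^{p}$ for any $p\ge 1$, the second summand handling the drift. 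Standard Gaussian moment bounds for $\SW$ and $\Se$, Cauchy--Schwarz, the identity $\sum_j\w=1$, and Jensen's plus Rosenthal's inequality for the average over $\ell$ then give $\mathbb{E}|\Ri-\Zi|^{p}\le C_p(\KK_n(\ah)^{\aalpha})^{p}$. The moment hypothesis \eqref{momnoise} allows $p$ to be chosen arbitrarily large, so a union bound combined with Markov's inequality yields
\[
\P\bigl(\maxi|\Ri-\Zi|>\delta_n\bigr)\le m_n\,\delta_n^{-p}C_p\bigl(\KK_n(\ah)^{\aalpha}\bigr)^{p}.
\]
Taking $\delta_n=(\an\log(h_n^{-1}))^{-1/2}$ and recalling the admissible choice $\KK_n=\mathcal{O}(\log n)$, condition \eqref{eq:an2} sends this probability to zero, which proves (i); the same argument with $i$ replaced by $i+1$ controls the second numerator difference.

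For (iii), an analogous Rosenthal estimate on $\Zi-\sigma_{i\ah}^{2}$ (a centered weighted quadratic form, as studied in \cite{Altmeyer2015}) together with $\sigma^{2}\ge\Km$ gives $\mini\Zi\ge\Km/2$ with high probability, and then (i) transfers this lower bound to $\mini\Ri$. On that event the mean value theorem yields $\bigl|\absRii^{3/4}-\absZii^{3/4}\bigr|\le C|\Rii-\Zii|$, so the second summand of the decomposition is dominated by $|\Zi-\Zii|\cdot|\Rii-\Zii|=\KLEINO_{\P}\bigl(\sqrt{\log(h_n^{-1})/\an}\cdot(\an\log(h_n^{-1}))^{-1/2}\bigr)=\KLEINO_{\P}(\an^{-1})$, which vanishes after multiplication by $\sqrt{\an\log(h_n^{-1})}$ thanks to the second part of \eqref{eq:an2}. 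Statement (ii) itself follows from the same moment analysis applied to the decomposition $\Zi-\Zii=(\Zi-\sigma_{i\ah}^{2})-(\Zii-\sigma_{(i+1)\ah}^{2})+(\sigma_{i\ah}^{2}-\sigma_{(i+1)\ah}^{2})$, where the regularity under \ref{hypo} handles the deterministic piece and a Rosenthal bound for the centered quadratic forms, upgraded uniformly by union bound with a large moment $p$, handles the stochastic fluctuation. The principal obstacle is precisely this step: promoting the single-block approximation of order $\KK_n(\ah)^{\aalpha}$ in (i) to a uniform bound across $m_n$ big blocks while surrendering no more than a $\sqrt{\log(h_n^{-1})}$ factor; this is why all polynomial moments of $\varepsilon$ are required in Assumption \ref{assnoise} (so that $m_n^{1/p}\to 1$ for arbitrary $p$) and why the slack in \eqref{eq:an2} is exactly tight enough to absorb the rescaling.
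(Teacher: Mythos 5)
The algebraic decomposition you use,
\[
\frac{\Ri-\Rii}{\absRii^{3/4}}-\frac{\Zi-\Zii}{\absZii^{3/4}}=\frac{(\Ri-\Zi)-(\Rii-\Zii)}{\absRii^{3/4}}+(\Zi-\Zii)\cdot\frac{\absZii^{3/4}-\absRii^{3/4}}{\absRii^{3/4}\absZii^{3/4}}\,,
\]
is correct and in fact tidier than the five-term split the paper works with, and the reduction to (i), (ii), (iii) is sound. The genuine gap is in the proof of (i). After expanding $S_{ij\ell}(Y)=\sigma_{i\ah}\SW+\Se+R_{ij\ell}$, the dominant contribution to $\Ri-\Zi$ is its $\mathcal{F}^{(0)}$-conditional bias, essentially
\[
\frac{1}{\an}\sum_{\ell=1}^{\an}\sum_{j=1}^{\nh}\w\int_{\ahll}^{\ahl}\bigl(\x(\tau)\bigr)^2\bigl(\sigma^2_\tau-\sigma^2_{i\ah}\bigr)\,d\tau\,,
\]
whose magnitude is $\asymp\KK_n(\ah)^{\aalpha}$. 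Your moment bound $\E|\Ri-\Zi|^p\le C_p\bigl(\KK_n(\ah)^{\aalpha}\bigr)^p$ is therefore essentially sharp (the left-hand side is at least $|\E[\Ri-\Zi]|^p$, which is already of this order), so you cannot do better than this exponent. Now plug into the union bound with $\delta_n=(\an\log(h_n^{-1}))^{-1/2}$: the right-hand side is $m_n\bigl((\an\log(h_n^{-1}))^{1/2}\KK_n(\ah)^{\aalpha}\bigr)^p$. Condition \eqref{eq:an2} only guarantees $\sqrt{\an}(\ah)^{\aalpha}\sqrt{\log n}\to 0$, and for $\an$ taken close to the admissible upper bound this quantity decays only \emph{logarithmically} in $n$, whereas $m_n\approx(\ah)^{-1}$ grows \emph{polynomially}. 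Hence $m_n\,q_n^p\to\infty$ for every fixed $p$, and no choice of $p$ (however large the moment exponent available from \eqref{momnoise}) can rescue the union bound. The extra polynomial factor $m_n$ is precisely what must not appear for the bias term.

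The remedy, and the reason the paper's argument works, is that the bias term must be isolated and controlled \emph{without} a union bound: it is measurable w.r.t.\ the volatility path, and after localization the Hölder estimate $|\sigma^2_\tau-\sigma^2_{i\ah}|\le\KK_n(\ah)^{\aalpha}$ is \emph{deterministic and uniform in $i$}, so $\maxi$ of the bias is bounded directly without paying $m_n$. That is exactly the role of the term $A_n^{1,2}$ in the paper's proof of Proposition~\ref{prop1}. Only the \emph{centered} fluctuation terms (there: $A_n^{1,1}$, $A_n^{1,3}$, $A_n^2$, $B_n$, and the analogous ones for $i+1$) should be treated by Markov plus a union bound, and for those the moment bounds carry a factor $h_n^m$ (or $(\ah)^{2m\aalpha}$, etc.) which decays \emph{polynomially}, so the extra $(\ah)^{-1}$ from the union bound is absorbed by taking $m$ large. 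Your proof otherwise follows the same logic as the paper's (the Taylor/mean-value step for the denominator, the lower bound on $\absZii$ via a CLT-type moment argument, and the translation from $i$ to $i+1$), so the fix is local: decompose $\Ri-\Zi$ into bias plus centered remainder before applying Markov and the union bound.
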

\noindent
\textit{Proof of Proposition \ref{prop1}}.\\
Since $\hn\propto n^{-1/2}\log(n)$ we can proceed as follows. 
The reverse triangle inequality and the decomposition 
\begin{align*}
&\frac{\overline{RV}_{n,i}-\overline{RV}_{n,i+1}}{\absRii^{3/4}}-\frac{\overline{Z}_{n,i}-\overline{Z}_{n,i+1}}{\absZii^{3/4}}\\
&=\frac{\overline{RV}_{n,i}}{\absRii^{3/4}}-\frac{\overline{RV}_{n,i}}{\absZii^{3/4}}+\frac{\overline{RV}_{n,i}-\overline{Z}_{n,i}}{\absZii^{3/4}}-\frac{\overline{RV}_{n,i+1}}{\absRii^{3/4}}+\frac{\overline{Z}_{n,i+1}-\overline{RV}_{n,i+1}}{\absZii^{3/4}}
\end{align*}
yield the following decomposition:
\begin{align}
&\maxi\left|\overline{RV}_{n,i}\left(\frac{1}{\absRii^{3/4}}-\frac{1}{\absZii^{3/4}}\right)\right|+\maxi\left|\frac{\overline{RV}_{n,i}-\overline{Z}_{n,i}}{\absZii^{3/4}}\right|\nonumber\\
&\quad + \maxi\left|\overline{RV}_{n,i+1}\left(\frac{1}{\absZii^{3/4}}-\frac{1}{\big|\overline{RV}_{n,i+1}\big|^{3/4}}\right)\right|+\maxi\left|\frac{\overline{Z}_{n,i+1}-\overline{RV}_{n,i+1}}{\absZii^{3/4}}\right|\nonumber\\
&=:\left(\textbf{I}\right)+\left(\textbf{II}\right)+\left(\textbf{III}\right)+\left(\textbf{IV}\right)\,.\label{eq:1}
\end{align}
Starting with $\left(\textbf{II}\right)$ in \eqref{eq:1} we proceed as follows.\\
For all $\delta>0$ and $\km>0$, such that $\km\in\left(0,\Km\right)$, the following holds:
\begin{align}
&\mathbb{P}\left[\maxi\left|\frac{\sqrt{\an\log\left(n\right)}\left(\overline{RV}_{n,i}-\overline{Z}_{n,i}\right)}{\absZii^{3/4}}\right|>\delta\right]\nonumber\\
&= \mathbb{P}\left[\maxi\left|\frac{\sqrt{\an\log\left(n\right)}\left(\overline{RV}_{n,i}-\overline{Z}_{n,i}\right)}{\absZii^{3/4}}\right|>\delta,\,\mini\absZii\geq \Kmk\right]\nonumber\\
&+\mathbb{P}\left[\maxi\left|\frac{\sqrt{\an\log\left(n\right)}\left(\overline{RV}_{n,i}-\overline{Z}_{n,i}\right)}{\absZii^{3/4}}\right|>\delta,\,\mini\absZii< \Kmk\right]\nonumber\\
&\leq \mathbb{P}\left[\maxi\sqrt{\an\log\left(n\right)}\left|\overline{RV}_{n,i}-\overline{Z}_{n,i}\right|>\delta\left(\Kmk\right)^{3/4}\right]+\mathbb{P}\left[\mini\,\absZii<\Km\hspace{-0.6em}-\km\right]\nonumber\\
&=:A_{n}+B_{n}.\label{eq:2}
\end{align}
In \eqref{eq:2} we dropped the dependence on the constants $\delta$ and $\Km$ for notational convenience.
We start with the term $A_{n}$. We split the term into various summands in the following way:
\begin{align*}
\overline{RV}_{n,i}-\overline{Z}_{n,i}=&\frac{1}{\an}\sum_{\ell=1}^{\an}\sum_{j=1}^{\nh}w_{ij\ell}\left(S^{2}_{ij\ell}\left(X\right)-\sigma^{2}_{i\ah}S^{2}_{ij\ell}\left(W\right)\right)\\
&\quad +\frac{2}{\an}\sum_{\ell=1}^{\an}\sum_{j=1}^{\nh}w_{ij\ell}S_{ij\ell}\left(\varepsilon\right)\left(S_{ij\ell}\left(X\right)-\sigma_{i\ah}S_{ij\ell}\left(W\right)\right).
\end{align*}
That yields
\begin{align}
\notag A_{n}&\leq\mathbb{P}\left[\maxi\left|\frac{\sqrt{\log\left(n\right)}}{\sqrt{\an}}\sum_{\ell=1}^{\an}\sum_{j=1}^{\nh}w_{ij\ell}\left(S^{2}_{ij\ell}\left(X\right)-\sigma^{2}_{i\ah}S^{2}_{ij\ell}\left(W\right)\right)\right|>\frac{\delta\left(\Kmk\right)^{3/4}}{2}\right]\\
&\notag \quad +\mathbb{P}\left[\maxi\left|\frac{\sqrt{\log\left(n\right)}}{\sqrt{\an}}\sum_{\ell=1}^{\an}\sum_{j=1}^{\nh}w_{ij\ell}\Se\left(S_{ij\ell}\left(X\right)-\sigma_{i\ah}S_{ij\ell}\left(W\right)\right)\right|>\frac{\delta\left(\Kmk\right)^{3/4}}{4}\right]\\
&\label{Adecomp}=:A^{1}_{n}+A^{2}_{n}\,.
\end{align}
In order to handle $A^{1}_{n,}$, we rewrite the spectral statistics $S_{ij\ell}\left(L\right)$, for any stochastic process $\left(L_{t}\right)_{t\in\left[0,1\right]}$, using step functions $\x$, given by
\begin{align*}
\x\left(t\right):&=\sum_{\nu=1}^{n}\Phi_{ij\ell}\left(\frac{\nu}{n}\right)\mathbbm{1}_{\left(\frac{\nu-1}{n},\frac{\nu}{n}\right]}(t)
\end{align*}
which yield
\begin{align*}
S_{ij\ell}\left(L\right)=\int_{\ahll}^{\ahl}\x\left(s\right)\,dL_{s}
\end{align*}
for any semimartingale $L=\left(L_{t}\right)_{t\in\left[0,1\right]}$.\\
By virtue of the It\^{o} process structure of $(X_t)$, we obtain that
\begin{align*}
\int_{\ahll}^{\ahl}\x\left(s\right)\,dX_{s}=\int_{\ahll}^{\ahl}\x\left(s\right)a_{s}\,ds+\int_{\ahll}^{\ahl}\x\left(s\right)\sigma_{s}\,dW_{s}\,.
\end{align*}
It\^{o}'s formula yields
\begin{align*}
S^{2}_{ij\ell}\left(X\right)=&2\int_{\ahll}^{\ahl}\left(\widetilde{X}_{\tau}-\widetilde{X}_{\ahll}\right)\x\left(\tau\right)\sigma_{\tau}\,dW_{\tau}\\
&\quad +2\int_{\ahll}^{\ahl}\left(\widetilde{X}_{\tau}-\widetilde{X}_{\ahll}\right)\x\left(\tau\right)a_{\tau}\,d\tau\\
&\quad +\int_{\ahll}^{\ahl}\left(\x\left(\tau\right)\right)^{2}\sigma^{2}_{\tau}\,d\tau
\end{align*}
with
\begin{align*}
\widetilde{X}_{t}:=X_{0}+\int_{0}^{t}\x\left(s\right)a_{s}\,ds+\int_{0}^{t}\x\left(s\right)\sigma_{s}\,dW_{s}.
\end{align*}
Similarly,
\begin{align*}
S^{2}_{ij\ell}\left(W\right)=2\int_{\ahll}^{\ahl}\widetilde{W}_{\tau}\,\x\left(\tau\right)\,dW_{\tau}+\int_{\ahll}^{\ahl}\left(\x\left(\tau\right)\right)^{2}\,d\tau
\end{align*}
with
\begin{align*}
\widetilde{W}_{t}:=\int_{0}^{t}\x\left(s\right)\,dW_{s}\,.
\end{align*}
For notational brevity, we suppress the dependence of $\widetilde{X}_{t}$ and $\widetilde{W}_{t}$, respectively, on $\left(i,j,\ell,n\right)$.
We bound $A_{n}^{1}$ via
\begin{align*}
A^{1}_{n}\leq A^{1,1}_{n} + A^{1,2}_{n} + A^{1,3}_{n}\,,
\end{align*}
where
\begin{align*}
A^{1,1}_{n}&=\mathbb{P}\bigg[\maxi\bigg|\frac{\sqrt{\log\left(n\right)}}{\sqrt{\an}}\sum_{\ell=1}^{\an}\sum_{j=1}^{\nh}w_{ij\ell}\int_{\ahll}^{\ahl}\left(\widetilde{X}_{\tau}-\widetilde{X}_{\ahll}\right)\\
&\hspace*{6cm}\times \x\left(\tau\right)a_{\tau}\,d\tau\bigg|>\frac{\delta\left(\Kmk\right)^{3/4}}{12}\bigg]\,,
\end{align*}
\begin{align*}
&A_{n}^{1,2}=\mathbb{P}\Bigg[\maxi\bigg|\frac{\sqrt{\log\left(n\right)}}{\sqrt{\an}}\sum_{\ell=1}^{\an}\sum_{j=1}^{\nh}w_{ij\ell}\int_{\ahll}^{\ahl}\left(\x\left(\tau\right)\right)^{2}\\
&\hspace*{4cm}\times \left(\sigma^{2}_{\tau}-\sigma^{2}_{i\ah}\right)\,d\tau\bigg|>\frac{\delta\left(\Kmk\right)^{3/4}}{6}\Bigg]\,,
\end{align*}
and
\begin{align*}
A_{n}^{1,3} & =\mathbb{P} \left[ \maxi \left|\frac{\sqrt{\log\left(n\right)}}{\sqrt{\an}}\sum_{\ell=1}^{\an}\sum_{j=1}^{\nh}\w\int_{\ahll}^{\ahl}\x\left(\tau\right)\right. \right. \notag \\
      & \quad  \left.\left. {\vphantom{\maxi\frac{\sqrt{\log\left(n\right)}}{\sqrt{\an}}\sum_{\ell=1}^{\an}}} \times \left(\left(\widetilde{X}_{\tau}-\widetilde{X}_{\ahll}\right)\sigma_{\tau}-\sigma^{2}_{i\ah}\widetilde{W}_{\tau}\right)dW_{\tau}\right|>\frac{\delta\left(\Kmk\right)^{3/4}}{12}\right]\,.
\end{align*}
Starting with $A_{n}^{1,1}$ we employ Markov's inequality, applied to the function $z\mapsto\left|z\right|^{r}$, $r>0$ and $r\in\mathbb{N}$:
\begin{align*}
&\mathbb{P}\hspace*{-0.05cm}\bigg[\bigg|\frac{\sqrt{\log\left(n\right)}}{\sqrt{\an}}\hspace*{-0.05cm}\sum_{\ell=1}^{\an}\hspace*{-0.1cm}\sum_{j=1}^{\nh}\hspace*{-0.2cm} w_{ij\ell}\hspace*{-0.05cm}\int_{\ahll}^{\ahl}\hspace*{-0.1cm}\left(\widetilde{X}_{\tau}\hspace*{-0.05cm}-\hspace*{-0.05cm}\widetilde{X}_{\ahll}\right)\x\hspace*{-0.05cm}\left(\tau\right)a_{\tau}\,d\tau\bigg|>\frac{\delta(\Kmk)^{3/4}}{12}\bigg]\\
&\leq \left(\frac{12}{\delta\left(\Kmk\right)^{3/4}}\right)^{r}\left(\log\left(n\right)\right)^{r/2}\an^{-r/2}\\
&\quad\quad\quad \times\mathbb{E}\left[\left|\sum_{\ell=1}^{\an}\sum_{j=1}^{\nh}w_{ij\ell}\int_{\ahll}^{\ahl}\left(\widetilde{X}_{\tau}-\widetilde{X}_{\ahll}\right)\x\left(\tau\right)a_{\tau}\,d\tau\right|^{r}\right].
\end{align*}
The identity
\begin{align*}
&\mathbb{E}\left[\left|\sum_{\ell=1}^{\an}\sum_{j=1}^{\nh}w_{ij\ell}\int_{\ahll}^{\ahl}\left(\widetilde{X}_{\tau}-\widetilde{X}_{\ahll}\right)\x\left(\tau\right)a_{\tau}\,d\tau\right|^{r}\right]\\
&=\an^{r}\,\mathbb{E}\left[\left|\sum_{\ell=1}^{\an}\frac{1}{\an}\sum_{j=1}^{\nh}w_{ij\ell}\int_{\ahll}^{\ahl}\left(\widetilde{X}_{\tau}-\widetilde{X}_{\ahll}\right)\x\left(\tau\right)a_{\tau}\,d\tau\right|^{r}\right]
\end{align*}
implies, together with Jensen's inequality, that
\begin{align*}
&\an^{r/2}\mathbb{E}\left[\left|\sum_{\ell=1}^{\an}\frac{1}{\an}\sum_{j=1}^{\nh}w_{ij\ell}\int_{\ahll}^{\ahl}\left(\widetilde{X}_{\tau}-\widetilde{X}_{\ahll}\right)\x\left(\tau\right)a_{\tau}\,d\tau\right|^{r}\right]\\
&\le \an^{r/2-1}\sum_{\ell=1}^{\an}\mathbb{E}\left[\left|\sum_{j=1}^{\nh}w_{ij\ell}\int_{\ahll}^{\ahl}\left(\widetilde{X}_{\tau}-\widetilde{X}_{\ahll}\right)\x\left(\tau\right)a_{\tau}\,d\tau\right|^{r}\right]\\
&\le \an^{r/2 -1}\sum_{\ell=1}^{\an}\sum_{j=1}^{\nh}w_{ij\ell}\mathbb{E}\left[\left|\int_{\ahll}^{\ahl}\left(\widetilde{X}_{\tau}-\widetilde{X}_{\ahll}\right)\x\left(\tau\right)a_{\tau}\,d\tau\right|^{r}\right].
\end{align*}
Concerning the second inequality we have taken into account, that $\sum_{j}^{}w_{ij\ell}=1$ in order to apply Jensen's inequality a second time.\\
We employ the generalized Minkowski inequality for double measure integrals, which implies
\begin{align}
&\mathbb{E}\left[\left|\int_{\ahll}^{\ahl}\left(\widetilde{X}_{\tau}-\widetilde{X}_{\ahll}\right)\x\left(\tau\right)a_{\tau}\,d\tau\right|^{r}\right]\nonumber\\
\leq &\left(\int_{\ahll}^{\ahl}\mathbb{E}\left[\left|\left(\widetilde{X}_{\tau}-\widetilde{X}_{\ahll}\right)\x\left(\tau\right)a_{\tau}\right|^{r}\right]^{1/r}\,d\tau\right)^{r}\,.\label{ep_mink}
\end{align}
In order to bound the expectation in \eqref{ep_mink}, we apply Burkolder's inequality to the local martingale part. The general case can be handled via the elementary inequality $\left|a+b\right|^{p}\leq 2^{p}\left(\left|a\right|^{p}+\left|b\right|^{p}\right)$ and the standard bound for Lebesgue integrals
\begin{align}\label{Leb}
\int_{\Omega}^{}f(s)\,d\mu(s)\leq \mu\left(\Omega\right)\sup_{s}\left|f(s)\right|\,,
\end{align}
applied to the finite variation part. 
Taking into account, that the quadratic variation process, $\big([\widetilde{X},\widetilde{X}]_t\big)_{t\in[0,1]}$, is given by
\begin{align*}
[\widetilde{X},\widetilde{X}]_t=\int_{0}^{t}\left(\x\left(s\right)\right)^{2}\sigma^{2}_{s}\,ds.
\end{align*}
yields
\begin{align}
\mathbb{E}\left[\left|\widetilde{X}_{\tau}-\widetilde{X}_{\ahll}\right|^{r}\right]\nonumber\leq &C_{r}\mathbb{E}\left[\left(\int_{\ahll}^{\tau}\left(\x\left(s\right)\right)^{2}\sigma^{2}_{s}\,ds\right)^{r/2}\right]\nonumber\\
\leq &C_{r}\mathbb{E}\left[\left(\int_{\ahll}^{\ahl}\left(\x\left(s\right)\right)^{2}\sigma^{2}_{s}\,ds\right)^{r/2}\right]\nonumber\\
\leq &C_{r}h_{n}^{r/2}h_{n}^{-r/2}=\mathcal{O}(1)\,.\label{st_bound_leb}
\end{align}
\eqref{st_bound_leb} is a consequence of \eqref{Leb},
\begin{align}\label{xibound}
\x\left(x\right)=\mathcal{O}\left(\frac{1}{\sqrt{h_{n}}}\right)\,,
\end{align}
and the global boundedness of $\left(\sigma^{2}_{t}\right)_{t\in\left[0,1\right]}$. Consequently the above yields
\begin{align*}
&\frac{\left(\log\left(n\right)\right)^{r/2}}{\an^{1-r/2}}\sum_{\ell=1}^{\an}\sum_{j=1}^{\nh}\hspace*{-.15cm}w_{ij\ell}\mathbb{E}\left[\left|\int_{\ahll}^{\ahl}\left(\widetilde{X}_{\tau}\hspace*{-.05cm}-\hspace*{-.05cm}\widetilde{X}_{\ahll}\hspace*{-.05cm}\right)\x\left(\tau\right)a_{\tau}\,d\tau\right|^{r}\right]\\
&\leq\left(\log\left(n\right)\right)^{r/2}\an^{r/2-1}\sum_{\ell=1}^{\an}\sum_{j=1}^{\nh}w_{ij\ell}h^{r}_{n}h^{-r/2}_{n}=\mathcal{O}\left(\log\left(n\right)^{r/2}\an^{r/2}h^{r/2}_{n}\right)\,.
\end{align*}
Taking into account that
\begin{align*}
\ah=\mathcal{O}\left(n^{-\frac{1}{4\mathfrak{a}+2}}\left(\log\left(n\right)\right)^{1-\frac{2\mathfrak{a}}{2\mathfrak{a}+1}}\right)\,,
\end{align*}
we can conclude, if $r>2$, that
\begin{align*}
A^{1,1}_n&=\mathcal{O}\left(\left(\ah\right)^{-1}\log\left(n\right)^{r/2}\an^{r/2}h^{r/2}_{n}\right)=\KLEINO\left(1\right)\,,\text{ as }n\rightarrow\infty\,.
\end{align*}
For the term $A^{1,2}_{n}$, we start with
\begin{align}
\frac{\sqrt{\log\left(n\right)}}{\sqrt{\an}}\left|\sum_{\ell=1}^{\an}\sum_{j=1}^{\nh}w_{ij\ell}\int_{\ahll}^{\ahl}\left(\x\left(\tau\right)\right)^{2}\left(\sigma^{2}_{\tau}-\sigma^{2}_{i\ah}\right)\,d\tau\right|\nonumber\\
\leq \frac{\sqrt{\log\left(n\right)}}{\sqrt{\an}}\sum_{\ell=1}^{\an}\sum_{j=1}^{\nh}w_{ij\ell}\int_{\ahll}^{\ahl}\left(\x\left(\tau\right)\right)^{2}\left|\sigma^{2}_{\tau}-\sigma^{2}_{i\ah}\right|\,d\tau\,.\label{ep_ani12}
\end{align}
In \eqref{ep_ani12} the triangle inequality and Jensen's inequality are applied. Combining the regularity under \ref{hypo} and \eqref{eq:an2} gives
\begin{align*}
&\frac{\sqrt{\log\left(n\right)}}{\sqrt{\an}}\sum_{\ell=1}^{\an}\sum_{j=1}^{\nh}w_{ij\ell}\int_{\ahll}^{\ahl}\left(\x\left(\tau\right)\right)^{2}\left|\sigma^{2}_{\tau}-\sigma^{2}_{i\ah}\right|\,d\tau\\
&=\mathcal{O}_{\mathbb{P}}\left(\sqrt{\log\left(n\right)}\sqrt{\an}\left(\an h_{n}\right)^{\mathfrak{a}}\right)\,,\,\text{uniformly in $i$}\\
&=\KLEINO\left(1\right)\,,\text{ as }n\rightarrow\infty.
\end{align*}
Concerning $A^{1,3}_{n}$ we use further decompositions rewriting
\begin{align}
\left(\widetilde{X}_{\tau}-\widetilde{X}_{\ahll}\right)\sigma_{\tau}-\widetilde{W}_{\tau}\sigma^{2}_{i\ah}
\end{align}
in the following way:
\begin{align}
&\left(\widetilde{X}_{\tau}-\widetilde{X}_{\ahll}\right)\sigma_{\tau}-\widetilde{W}_{\tau}\sigma^{2}_{i\ah}\nonumber\\
&=\sigma_{\tau}\int_{\ahll}^{\tau}\x\left(s\right)a_{s}\,ds\nonumber\\
&+\sigma_{\tau}\int_{\ahll}^{\tau}\x\left(s\right)\sigma_{s}\,dW_{s}-\sigma_{i\ah}\int_{\ahll}^{\tau}\x\left(s\right)\sigma_{s}\,dW_{s}\nonumber\\
&+\sigma_{i\ah}\int_{\ahll}^{\tau}\x\left(s\right)\sigma_{s}\,dW_{s}-\sigma_{i\ah}\int_{\ahll}^{\tau}\x\left(s\right)\sigma_{i\ah}\,dW_{s}\nonumber\\
&=\sigma_{\tau}\int_{\ahll}^{\tau}\x\left(s\right)a_{s}\,ds\label{an13a}\\
&+\left(\sigma_{\tau}-\sigma_{i\ah}\right)\int_{\ahll}^{\tau}\x\left(s\right)\sigma_{s}\,dW_{s}\label{an13b}\\
&+\sigma_{i\ah}\int_{\ahll}^{\tau}\x\left(s\right)\left(\sigma_{s}-\sigma_{i\ah}\right)\,dW_{s}\,.\label{an13c}
\end{align}
Using this decomposition, we can bound $A_{n}^{1,3}$ via
\begin{align*}
A_{n}^{1,3}\leq A_{n}^{1,3a}+A_{n}^{1,3b}+A_{n}^{1,3c}\,.
\end{align*}
We start with the probability involving the summand \eqref{an13a}. We have to bound the probability
\begin{align*}
&\mathbb{P} \left[\left|\frac{\sqrt{\log\left(n\right)}}{\sqrt{\an}}\sum_{\ell=1}^{\an}\sum_{j=1}^{\nh}\int_{\ahll}^{\ahl}\w\x\left(\tau\right)\sigma_{\tau}\right. \right. \notag \\
      & \left.\left. \hspace*{4cm}
      \quad\quad\times \int_{\ahll}^{\tau}\x\left(s\right)a_{s}\,ds\,dW_{\tau}\right|>\frac{\delta\left(\Kmk\right)^{3/4}}{36}\right]\\
      &\leq \left(\frac{36}{\delta\left(\Kmk\right)^{3/4}}\right)^{r}\left(\frac{\sqrt{\log\left(n\right)}}{\sqrt{\an}}\right)^{r}\nonumber\\
&\quad\times\mathbb{E}\left[\left|\sum_{\ell=1}^{\an}\sum_{j=1}^{\nh}w_{ij\ell}\int_{\ahll}^{\ahl}\x\left(\tau\right)\sigma_{\tau}\int_{\ahll}^{\tau}\x\left(s\right)a_{s}\,ds\,dW_{\tau}\right|^{r}\right]\,,
\end{align*}
where we have applied Markov's inequality with some exponent $r>0$ and $r\in\mathbb{N}$. Set
\begin{align*}
c_{n,i}\left(\tau\right)=\sum_{\ell=1}^{\an}\sigma_{\tau}\sum_{j=1}^{\nh}w_{ij\ell}\x\left(\tau\right)\int_{\ahll}^{\tau}\x\left(s\right)a_{s}\,ds\\
\times\mathbbm{1}_{\left(\ahll,\ahl\right]}\left(\tau\right)\notag\,.
\end{align*}
In order to apply It\^{o} isometry, we set $r=2m$, with some $m>0$ and $m\in\mathbb{N}$.
We derive that
\begin{align*}
&\mathbb{E}\left[\left|\sum_{\ell=1}^{\an}\sum_{j=1}^{\nh}w_{ij\ell}\int_{\ahll}^{\ahl}\x\left(\tau\right)\sigma_{\tau}\int_{\ahll}^{\tau}\x\left(s\right)a_{s}dsdW_{\tau}\right|^{r}\right]\\
&=\mathbb{E}\left[\left(\int_{i\ah}^{\left(i+1\right)\ah}(c_{n,i}\left(\tau\right))^{2}\,d\tau\right)^{m}\right]\leq \left(\int_{i\ah}^{\left(i+1\right)\ah}\mathbb{E}\left[\left(c_{n,i}\left(\tau\right)\right)^{2m}\right]^{1/m}\,d\tau\right)^{m}\,,
\end{align*}
where we have again used the Minkowski inequality for double measure integrals. Since $\left(i\ah+\left(\ell_{1}-1\right)h_{n},i\ah+\ell_{1}h_{n}\right]$ and $\left(\ah+\left(\ell_{2}-1\right)h_{n},i\ah+\ell_{2}h_{n}\right]$ are disjoint if $\ell_{1}\neq\ell_{2}$ and $\tau$ is fixed, we get
\begin{align*}
\mathbb{E}\left[\left(c_{n,i}\left(\tau\right)\right)^{2m}\right]&= \sum_{\ell=1}^{\an}\mathbb{E}\left[\sigma^{2m}_{\tau}\left(\sum_{j=1}^{\nh}w_{ij\ell}\x\left(\tau\right)\int_{\ahll}^{\tau}\x\left(s\right)a_{s}\,ds\right)^{2m}\right]\\
&\hspace*{6.5cm}   \times\mathbbm{1}_{(\ahll,\ahl]}\left(\tau\right).
\end{align*}
We proceed with Jensen's inequality, which yields
\begin{align*}
 &\sum_{\ell=1}^{\an}\mathbb{E}\left[\sigma^{2m}_{\tau}\left(\sum_{j=1}^{\nh}w_{ij\ell}\x\left(\tau\right)\int_{\ahll}^{\tau}\x\left(s\right)a_{s}\,ds\right)^{2m}\right]\\
&\quad\quad\quad\quad\quad\quad\quad\quad\quad\quad\quad\quad\quad\quad   \times\mathbbm{1}_{\left(\ahll,\ahl\right]}\left(\tau\right)\\
&\leq \sum_{\ell=1}^{\an}\mathbb{E}\left[\sigma^{2m}_{\tau}\sum_{j=1}^{\nh}w_{ij\ell}\left(\x\left(\tau\right)\int_{\ahll}^{\tau}\x\left(s\right)a_{s}\,ds\right)^{2m}\right]\\
&\quad\quad\quad\quad\quad\quad\quad\quad\quad\quad\quad\quad\quad\quad   \times\mathbbm{1}_{\left(\ahll,\ahl\right]}\left(\tau\right)\,.
\end{align*}
Using \eqref{xibound}, global boundedness of the volatility and \eqref{st_bound_leb} we can conclude that
\begin{align*}
\mathbb{E}\left[\left(c_{n,i}\left(\tau\right)\right)^{2m}\right]=\mathcal{O}\left(1\right)\,.
\end{align*} 
Consequently, we can conclude as follows using \eqref{st_bound_leb}:
\begin{align}
&\left(\int_{i\ah}^{\left(i+1\right)\ah}\mathbb{E}\left[(c_{n,i}(\tau))^{2m}_{\tau}\right]^{1/m}\,d\tau\right)^{m}=\mathcal{O}\left(\left(\an  h_{n}\right)^{m}\right)\,.
\end{align}
That yields the following bound for $A_{n}^{1,3a}$
\begin{align*}
A_{n}^{1,3a}&=\mathcal{O}\left(\left(\log\left(n\right)\right)^{m}\hn^{m}\left(\ah\right)^{-1}\right)=\KLEINO\left(1\right)\,,\text{ as }n\rightarrow \infty\,,
\end{align*}
for $m$ sufficiently large.\\
We proceed with the probability $A^{1,3b}_{n}$ involving the term \eqref{an13b}. We first get the standard bound by the Markov inequality with some exponent $r>0$ and $r\in\mathbb{N}$:
\begin{align*}
&\mathbb{P}\left[\frac{\sqrt{\log\left(n\right)}}{\sqrt{\an}}\left|\sum_{\ell=1}^{\an}\sum_{j=1}^{\nh}w_{ij\ell}\int_{\ahll}^{\ahl}\x\left(\tau\right)\left(\sigma_{\tau}-\sigma_{i\ah}\right)\right.\right.\notag\\
&\left.\left.{\vphantom{\frac{\sqrt{\log\left(n\right)}}{\sqrt{\an}}\sum_{\ell=1}^{\an}}}\quad\quad\quad\quad\quad\quad\times\int_{\ahll}^{\tau}\x\left(s\right)\sigma_{s}\,dW_{s}dW_{\tau}\right|>\frac{\delta\left(\Kmk\right)^{3/4}}{36}\right]\nonumber\\
&\leq \left(\frac{36}{\delta\left(\Kmk\right)^{3/4}}\right)^{r}\left(\frac{\sqrt{\log\left(n\right)}}{\sqrt{\an}}\right)^{r}\mathbb{E}\left[\left|\sum_{\ell=1}^{\an}\sum_{j=1}^{\nh}w_{ij\ell}\int_{\ahll}^{\ahl}\x\left(\tau\right)\right.\right.\notag\\
&\quad\quad\quad\quad\quad\quad\quad\quad\quad\quad\quad\quad\quad\quad\times\left(\sigma_{\tau}-\sigma_{i\ah}\right){\vphantom{\sum_{\ell=1}^{\an}}}\left.\left.\int_{\ahll}^{\tau}\x\left(s\right)\sigma_{s}\,dW_{s}dW_{\tau}\right|^{r}\right]\,.
\end{align*}
We define
\begin{align*}
s_{n,i}\left(\tau\right)=\sum_{\ell=1}^{\an}\left(\sigma_{\tau}-\sigma_{i\ah}\right)\sum_{j=1}^{\nh}w_{ij\ell}\x\left(\tau\right)\int_{\ahll}^{\tau}\x\left(s\right)\sigma_{s}\,dW_{s}\\
\quad\quad\quad\quad\times\mathbbm{1}_{\left(\ahll,\ahl\right]}\left(\tau\right).
\end{align*}
In order to apply It\^{o} isometry, we set $r=2m$, with $m>0$ and $m\in\mathbb{N}$. We obtain that
\begin{align*}
\mathbb{E}\left[\left|\int_{i\ah}^{\left(i+1\right)\ah}s_{n,i}\left(\tau\right)\,dW_{\tau}\right|^{2m}\right]&=\mathbb{E}\left[\left(\int_{i\ah}^{\left(i+1\right)\ah}\left(s_{n,i}\left(\tau\right)\right)^{2}\,d\tau\right)^{m}\right]\\
&\leq \left(\int_{i\ah}^{\left(i+1\right)\ah}\mathbb{E}\left[\left(s_{n,i}\left(\tau\right)\right)^{2m}\right]^{1/m}\,d\tau\right)^{m}.
\end{align*}
We have that
\begin{align*}
&\mathbb{E}\left[\left(s_{n,i}\left(\tau\right)\right)^{2m}\right]\\
&=\mathbb{E}\left[\sum_{\ell=1}^{\an}\left(\sigma_{\tau}-\sigma_{i\ah}\right)^{2m}\left(\sum_{j=1}^{\nh}w_{ij\ell}\x(s)\int_{\ahll}^{\tau}\x\left(s\right)\sigma_{s}\,dW_{s}\right)^{2m}\right]\\
&\quad\quad\quad\quad\quad\quad\quad\quad\quad\quad\quad\quad\quad\quad   \times\mathbbm{1}_{\left(\ahll,\ahl\right]}\left(\tau\right)\\
&=\mathbb{E}\left[\sum_{\ell=1}^{\an}\left(\sigma_{\tau}-\sigma_{i\ah}\right)^{2m}\sum_{j=1}^{\nh}w_{ij\ell}\x(s)\left(\int_{\ahll}^{\tau}\x\left(s\right)\sigma_{s}\,dW_{s}\right)^{2m}\right]\\
&\quad\quad\quad\quad\quad\quad\quad\quad\quad\quad\quad\quad\quad\quad   \times\mathbbm{1}_{\left(\ahll,\ahl\right]}\left(\tau\right)\\
&\leq\mathbb{E}\left[\sum_{\ell=1}^{\an}\left(\ah\right)^{2m\mathfrak{a}}\sum_{j=1}^{\nh}w_{ij\ell}\left(\x\left(\tau\right)\int_{\ahll}^{\tau}\x\left(s\right)^{2}\sigma^{2}_{s}\,d{s}\right)^{m}\right]\\
&\quad\quad\quad\quad\quad\quad\quad\quad\quad\quad\quad\quad\quad\quad   \times\mathbbm{1}_{\left(\ahll,\ahl\right]}\left(\tau\right)\\
&=\mathcal{O}\left(\left(\ah \right)^{2m\mathfrak{a}}h^{-m}_{n}\right)\,,
\end{align*}
by the regularity under \ref{hypo}. Overall, we can deduce for $A^{1,3b}_{n}$ that
\begin{align*}
A^{1,3b}_{n}&=\mathcal{O}\left(\left(\ah\right)^{-1}\left(\ah \right)^{2m\mathfrak{a}}\log(n)^{m}\right)=\KLEINO\left(1\right)\,,\text{ as }n\rightarrow\infty\,,
\end{align*}
if $m\in\mathbb{N}$ sufficiently large.
Proceeding with $A^{1,3c}_{n}$, we have with $r>0$ and $r\in\mathbb{N}$:
\begin{align}
&\mathbb{P}\left[\left|\frac{\sqrt{\log\left(n\right)}}{\sqrt{\an}}\sum_{\ell=1}	^{\an}\sum_{j=1}^{\nh}w_{ij\ell}\int_{\ahll}^{\ahl}\x\left(\tau\right)\sigma_{i\ah}\right.\right.\notag\\
&\left.\left.{\vphantom{\frac{\sqrt{\log\left(n\right)}}{\sqrt{\an}}}}\quad\quad\quad\times\int_{\ahll}^{\ahl}\x\left(s\right)\left(\sigma_{s}-\sigma_{i\ah}\right)dW_{s}dW_{\tau}\right|>\frac{\delta\left(\Kmk\right)^{3/4}}{36}\right]\label{pan13c}\\
&\leq \left(\frac{36}{\delta\left(\Kmk\right)^{3/4}}\right)^{r}\left(\frac{\sqrt{\log\left(n\right)}}{\sqrt{\an}}\right)^{r}\mathbb{E}\left[\left|\sum_{\ell=1}^{\an}\sum_{j=1}^{\nh}w_{ij\ell}\int_{\ahll}^{\ahl}\x\left(\tau\right)\sigma_{i\ah}\right.\right.\notag\\
&\left.\left.{\vphantom{\sum_{\ell=1}^{\an}}}\hspace*{5cm}\times\int_{\ahll}^{\ahl}\x\left(s\right)\left(\sigma_{s}-\sigma_{i\ah}\right)dW_{s}dW_{\tau}\right|^{r}\right].\nonumber
\end{align}
Analogously, we set
\begin{align*}
&q_{n,i}(\tau):=\sum_{\ell=1}^{\an}\sigma_{i\ah}\sum_{j=1}^{\nh}w_{ij\ell}\x\left(\tau\right)\int_{\ahll}^{\tau}\x\left(s\right)\left(\sigma_{s}-\sigma_{i\ah}\right)dW_{s}\\
&\quad\quad\quad\quad\quad\quad\quad\quad\quad\quad\quad\quad\quad\quad   \times\mathbbm{1}_{\left(\ahll,\ahl\right]}\left(\tau\right).
\end{align*}
With $r=2m$, $r>0$ and $r\in\mathbb{N}$ we apply It\^{o} isometry and Minkowski inequality.\\
\begin{align*}
\mathbb{E}\left[\left(\int_{i\ah}^{\left(i+1\right)\ah}q_{n,i}(\tau)\,dW_{\tau}\right)^{2m}\right]&=\mathbb{E}\left[\left(\int_{i\ah}^{\left(i+1\right)\ah}\left(q_{n,i}(\tau)\right)^{2}\,d\tau\right)^{m}\right]\\&\leq \left(\int_{i\ah}^{\left(i+1\right)\ah}\mathbb{E}\left[\left(q_{n,i}(\tau)\right)^{2m}\right]^{1/m}d\tau\right)^{m}.
\end{align*}
Since $\left(i\ah+\left(\ell_{1}-1\right)h_{n},i\ah+\ell_{1}h_{n}\right]$ and $\left(i\ah+\left(\ell_{2}-1\right)h_{n},i\ah+\ell_{2}h_{n}\right]$ are disjoint if $\ell_{1}\neq\ell_{2}$ and $\tau$ is fixed, we get
\begin{align*}
&\mathbb{E}\left[\left(q_{n,i}(\tau)\right)^{2m}\right]\\
&=\mathbb{E}\left[\sum_{\ell=1}^{\an}\sigma^{2m}_{i\ah}\left(\sum_{j=1}^{\nh}w_{ij\ell}\x\left(\tau\right)\int_{\ahll}^{\tau}\x\left(s\right)\left(\sigma_{s}-\sigma_{i\ah}\right)dW_{s}\right)^{2m}\right]\\
&\quad\quad\quad\quad\quad\quad\quad\quad\quad\quad\quad\quad\quad\quad   \times\mathbbm{1}_{\left(\ahll,\ahl\right]}\left(\tau\right)\\
&\leq \mathbb{E}\left[\sum_{\ell=1}^{\an}\sigma^{2m}_{i\ah}\sum_{j=1}^{\nh}w_{ij\ell}\left(\x\left(\tau\right)\right)^{2m}\left(\int_{\ahll}^{\tau}\x\left(s\right)\left(\sigma_{s}-\sigma_{i\ah}\right)dW_{s}\right)^{2m}\right]\\
&\quad\quad\quad\quad\quad\quad\quad\quad\quad\quad\quad\quad\quad\quad   \times\mathbbm{1}_{\left(\ahll,\ahl\right]}\left(\tau\right)
\end{align*}
where we applied Jensen's inequality. Proceeding with Burkholder's inequality and \eqref{Leb}, we get
\begin{align*}
&\mathbb{E}\left[\sum_{\ell=1}^{\an}\sigma^{2m}_{i\ah}\sum_{j=1}^{\nh}w_{ij\ell}\left(\x\left(\tau\right)\right)^{2m}\left(\int_{\ahll}^{\tau}\x\left(s\right)\left(\sigma_{s}-\sigma_{i\ah}\right)dW_{s}\right)^{2m}\right]\\
&\quad\quad\quad\quad\quad\quad\quad\quad\quad\quad\quad\quad\quad\quad   \times\mathbbm{1}_{\left(\ahll,\ahl\right]}\left(\tau\right)\\
&\leq K_{m}\mathbb{E}\left[\sum_{\ell=1}^{\an}\sigma^{2m}_{i\ah}\sum_{j=1}^{\nh}w_{ij\ell}\left(\x\left(\tau\right)\right)^{2m}\left(\int_{\ahll}^{\tau}\left(\x\left(s\right)\right)^{2}\left(\sigma_{s}-\sigma_{i\ah}\right)^{2}d{s}\right)^{m}\right]\\
&\quad\quad\quad\quad\quad\quad\quad\quad\quad\quad\quad\quad\quad\quad   \times\mathbbm{1}_{\left(\ahll,\ahl\right]}\left(\tau\right)\\
&=\mathcal{O}\left(\left(\an h_{n}\right)^{2m\mathfrak{a}}h_{n}^{-m}\right)\,,
\end{align*}
which gives the following bound concerning $A^{1,3c}_{n}$:
\begin{align*}
A^{1,3c}_{n}&=\mathcal{O}\left(\left(\ah\right)^{-1}\left(\log\left(n\right)\right)^{m}\left(\ah\right)^{2m\mathfrak{a}}\right)=\KLEINO\left(1\right)\,,\text{ as }n\rightarrow\infty\,,
\end{align*}
if $m$ is sufficiently large.
We have completed the third term $A^{1,3c}_{n}$ and so $A^{1,3}_{n}$. Overall, the term $A^{1}_{n}$ has shown to be negligible. 
We proceed with $A_{n}^{2}$ from \eqref{Adecomp}. Therefore, we take into account that
\begin{align}
&\SX-\sigma_{i\ah}\SW\nonumber\\
&=\int_{\ahll}^{\ahl}\x(s)\,dX_{s}-\sigma_{i\ah}\int_{\ahll}^{\ahl}\x(s)\,dW_{s}\nonumber\\
&=\int_{\ahll}^{\ahl}\x(s)\left(\sigma_{s}-\sigma_{i\ah}\right)dW_{s}+\int_{\ahll}^{\ahl}\x(s)a_{s}\,ds\,.\label{an2}
\end{align}
Using this identity, we bound $A^{2}_{n}$ by
\begin{align*}
A^{2}_{n}\leq A^{2,1}_{n}+A^{2,2}_{n}\,,
\end{align*}
where the probability $A^{2,1}_{n}$ is based on the local martingale part in \eqref{an2} and $A^{2,2}_{n}$ is based on the finite variation part. The elementary inequality $\left|a+b\right|^{p}\leq 2^{p}\left(\left|a\right|^{p}+\left|b\right|^{p}\right)$ allows to split the discussion of $A^{2}_{n}$.
Starting with $A^{2,1}_{n}$ we proceed as follows using Markov's inequality with an exponent $r>0$ and $r\in\mathbb{N}$.
\begin{align*}
&\mathbb{P}\left[\left|\sum_{\ell=1}^{\an}\sum_{j=1}^{\nh}\w\Se\int_{\ahll}^{\ahl}\x\left(s\right)\left(\sigma_{s}-\sigma_{i\ah}\right)dW_{s}\right|>\frac{\delta\left(\Kmk\right)^{3/4}}{8}\right]\\
&\leq \left(\frac{8}{\delta\left(\Kmk\right)^{3/4}}\right)^{r}\mathbb{E}\left[\left|\sum_{\ell=1}^{\an}\sum_{j=1}^{\nh}\w\Se\int_{\ahll}^{\ahl}\x\left(s\right)\left(\sigma_{s}-\sigma_{i\ah}\right)dW_{s}\right|^{r}\right]
\end{align*}
We define
\begin{align*}
v_{n,i}\left(\tau\right)&=\frac{1}{\hn}\sum_{\ell=1}^{\an}\sum_{j=1}^{\nh}\w\Se\int_{\ahll}^{\tau}\x\left(s\right)\left(\sigma_{s}-\sigma_{i\ah}\right)dW_{s}\\
&\quad\quad\quad\quad\quad\quad\quad\quad\quad\quad\quad\quad\quad\quad\times\mathbbm{1}_{\left(\ahll,\ahl\right]}\left(\tau\right)\,,
\end{align*}
such that with $r=2m$, $m>0$ and $m\in\mathbb{N}$
\begin{align*}
&=\mathbb{E}\left[\left|\sum_{\ell=1}^{\an}\sum_{j=1}^{\nh}\w\Se\int_{\ahll}^{\ahl}\x\left(s\right)\left(\sigma_{s}-\sigma_{i\ah}\right)dW_{s}\right|^{r}\right]\\
&=\mathbb{E}\left[\left|\int_{i\ah}^{(i+1)\ah}v_{n,i}\left(\tau\right)d\tau\right|^{2m}\right]\leq \left(\int_{i\ah}^{(i+1)\ah}\mathbb{E}\left[\left(v_{n,i}\left(\tau\right)\right)^{2m}\right]^{1/2m}d\tau\right)^{2m}\,.
\end{align*}
In order to bound this expectation, we split the $j$-sum using the elementary inequality $\left|a+b\right|^{p}\leq 2^{p}\left(\left|a\right|^{p}+\left|b\right|^{p}\right)$ and that the weights fulfill the following growth behaviour:
\begin{equation}
w_{jk} \propto \begin{cases}
        \hphantom{-}1, & \text{for $j \leq \sqrt{n}\hn$}\label{wOrder}\\
        j^{-4}n^{2}\hn^{4}, & \text{for $j > \sqrt{n}\hn$}\,.
      \end{cases}
\end{equation} 
That yields
\begin{align*}
&\mathbb{E}\left[\left(v_{n,i}\left(\tau\right)\right)^{2m}\right]\\
&=\mathbb{E}\left[\frac{1}{\hn^{2m}}\sum_{\ell=1}^{\an}\left(\sum_{j=1}^{\nh}\w\Se\int_{\ahll}^{\ahl}\x\left(s\right)\left(\sigma_{s}-\sigma_{i\ah}\right)dW_{s}\right)^{2m}\right]\\
&\quad\quad\quad\quad\quad\quad\quad\quad\quad\quad\quad\quad\quad\quad\times\mathbbm{1}_{\left(\ahll,\ahl\right]}\left(\tau\right)\\
&\leq\frac{2^{2m}}{\hn^{2m}}\sum_{\ell=1}^{\an}\mathbb{E}\left[\left(\sum_{j=1}^{\sqrt{n}\hn}\Se\int_{\ahll}^{\tau}\x(s)\left(\sigma_{s}-\sigma_{i\ah}\right)dW_{s}\right)^{2m}\right]\\
&\quad\quad\quad\quad\quad\quad\quad\quad\quad\quad\quad\quad\quad\quad\times\mathbbm{1}_{\left(\ahll,\ahl\right]}\left(\tau\right)\\
&+\frac{2^{2m}}{\hn^{2m}}\sum_{\ell=1}^{\an}\mathbb{E}\left[\left(\sum_{j=\sqrt{n}\hn+1}^{\nh}j^{-4}n\hn\Se\int_{\ahll}^{\tau}\x(s)\left(\sigma_{s}-\sigma_{i\ah}\right)dW_{s}\right)^{2m}\right]\\
&\quad\quad\quad\quad\quad\quad\quad\quad\quad\quad\quad\quad\quad\quad\times\mathbbm{1}_{\left(\ahll,\ahl\right]}\left(\tau\right)\,.
\end{align*}
Since 
\begin{align*}
j^{-4}n^{2}\hn^{4}=\mathcal{O}\left(1\right)\quad\text{for}\quad \sqrt{n}\hn\leq j\leq n\hn\,,
\end{align*}
it is sufficient to consider the first summand only.\\
The calculations pursued in Lemma 2 in \cite{BibingerWinkel2018} imply the following, using the fact, that $\left(\varepsilon_{t}\right)_{t\in\left[0,1\right]}$ is independent of $\mathcal{F}^{(0)}$.
\begin{align*}
&\mathbb{E}\left[\frac{1}{\hn^{2m}}\sum_{\ell=1}^{\an}\left(\sum_{j=1}^{\sqrt{n}\hn}\w\Se\int_{\ahll}^{\ahl}\x\left(s\right)\left(\sigma_{s}-\sigma_{i\ah}\right)dW_{s}\right)^{2m}\right]\\
&\leq C_{m}\mathbb{E}\left[\left(\int_{\ahll}^{\ahl}\left(\x(s)\right)^{2}\left(\sigma_{s}-\sigma_{i\ah}\right)^{2}ds\right)^{m}\right]\\
&=\mathcal{O}\left(\left(\ah\right)^{2m\mathfrak{a}}\hn^{-2m}\right)\,,
\end{align*}
such that
\begin{align*}
\mathbb{E}\left[\left(v_{n,i}\left(\tau\right)\right)^{2}\right]=\mathcal{O}\left(\left(\ah\right)^{2m\mathfrak{a}}\hn^{-2m}\right)\,.
\end{align*}
We can conclude that
\begin{align*}
\mathbb{E}\left[\left|\int_{i\ah}^{(i+1)\ah}v_{n,i}\left(\tau\right)d\tau\right|^{2m}\right]=\mathcal{O}\left(\left(\ah\right)^{2m\mathfrak{a}}\an^{2m}\right)\,.
\end{align*}
Overall, we get
\begin{align*}
A_{n}^{2,1}&=\mathcal{O}\left(\left(\ah\right)^{2m\mathfrak{a}}\left(\log\left(n\right)\right)^{m}\an^{m}\left(\ah\right)^{-1}\right)=\KLEINO(1)\,,\text{ as }n\rightarrow\infty\,,
\end{align*}
if $m\in\mathbb{N}$ is sufficiently large.\\
The term $A_{n}^{2,2,}$ can be handled easier, using \eqref{Leb} instead of Burkholder's inequality. Overall, it is shown that $A_{n}^{2}=\KLEINO(1)$.\\
We can proceed with $B_{n}$ from \eqref{eq:2}. Note that
\begin{align*}
\mathbb{P}\left[\mini\absZii< \Kmk\right]&\leq\mathbb{P}\left[\mini\Zii< \Kmk\right]\\
&\leq \sum_{i=0}^{{\lfloor \left(\ah\right)^{-1}\rfloor}-2}\mathbb{P}\left[\Zii< \Kmk\right]\,.
\end{align*}
It is sufficient to bound the probability
\begin{align*}
&\mathbb{P}\left[\overline{Z}_{n,i+1}< \Kmk\right]\\
&=\mathbb{P}\left[\frac{1}{\an}\sum_{\ell=1}^{\an}\sum_{j=1}^{\nh}\w\left(\left(\sigma_{i\ah}\SW+\Se\right)^{2}-\mu_{ij\ell}\right)<\Kmk\right]\\
&=\mathbb{P}\left[\frac{1}{\an}\sum_{\ell=1}^{\an}\sum_{j=1}^{\nh}\w\left(\left(\sigma_{i\ah}\SW+\Se\right)^{2}-\mu_{ij\ell}\right)-\sigma^{2}_{i\ah}<\Km\hspace{-0.6em}-\sigma^{2}_{i\ah}\hspace{-0.6em}-\km\right].
\end{align*}
Note that $\Km-\sigma^{2}_{i\ah}<0$ and $\km>0$, such that we can proceed with Markov's inequality with an exponent $r>0$ and the elementary inequality $\left|a+b+c\right|^{r}\leq 3^{r}\left(\left|a\right|^{r}+\left|b\right|^{r}+\left|c\right|^{r}\right)$:
\begin{align*}
&\mathbb{P}\left[\frac{1}{\an}\sum_{\ell=1}^{\an}\sum_{j=1}^{\nh}\w\left(\left(\sigma_{i\ah}\SW+\Se\right)^{2}-\mu_{ij\ell}\right)-\sigma^{2}_{i\ah}<\Km\hspace{-0.6em}-\sigma^{2}_{i\ah}\hspace{-0.6em}-\km\right]\\
&\leq \mathbb{P}\left[\frac{1}{\an}\left|\sum_{\ell=1}^{\an}\sum_{j=1}^{\nh}\w\left(\left(\sigma_{i\ah}\SW+\Se\right)^{2}-\mu_{ij\ell}\right)-\sigma^{2}_{i\ah}\right|>\km\right]\\
&\leq\left(\frac{3^r}{\km}\right)^{r} \mathbb{E}\left[\left|\frac{1}{\an}\sum_{\ell=1}^{\an}\sum_{j=1}^{\nh}\w\sigma^{2}_{i\ah}\left(\SWsq-1\right)\right|^{r}\right]\\
&+\left(\frac{2\cdot3^r}{\km}\right)^{r}\mathbb{E}\left[\left|\frac{1}{\an}\sum_{\ell=1}^{\an}\sum_{j=1}^{\nh}\w\sigma_{i\ah}\SW\Se\right|^{r}\right]\\
&+\left(\frac{3^{r}}{\km}\right)^{r} \mathbb{E}\left[\left|\frac{1}{\an}\sum_{\ell=1}^{\an}\sum_{j=1}^{\nh}\w\left(\Sesq-\mu_{ij\ell}\right)^{2}\right|^{r}\right]\\
&\leq\left(\frac{3^{r}\Kp}{\km}\right)^{r} \mathbb{E}\left[\frac{1}{\an^{r/2}}\left|\frac{1}{\sqrt{\an}}\sum_{\ell=1}^{\an}\sum_{j=1}^{\nh}\w\left(\SWsq-1\right)\right|^{r}\right]\\
&+\left(\frac{2\Kp3^{r}}{\km}\right)^{r}\mathbb{E}\left[\frac{1}{\an^{r/2}}\left|\frac{1}{\sqrt{\an}}\sum_{\ell=1}^{\an}\sum_{j=1}^{\nh}\w\SW\Se\right|^{r}\right]\\
&+\left(\frac{3^{r}}{\km}\right)^{r} \mathbb{E}\left[\frac{1}{\an^{r/2}}\left|\frac{1}{\sqrt{\an}}\sum_{\ell=1}^{\an}\sum_{j=1}^{\nh}\w\left(\Sesq-\mu_{ij\ell}\right)^{2}\right|^{r}\right]\\
&=\mathcal{O}\left(\an^{-r/2}\right)\,,
\end{align*}
by the classical central limit theorem. This implies
\begin{align*}
\mathbb{P}\left[\mini\absZii< \Kmk\right]&=\mathcal{O}\left(\an^{-r/2}\left(\ah\right)^{-1}\right)=\KLEINO(1)\,,\text{ as }n\rightarrow\infty\,,
\end{align*}
if $r>0$ sufficiently large. Thus, we have completed the term $B_{n}$, and so the term (\textbf{II}).\\
We proceed with (\textbf{I}) from \eqref{eq:1}. It holds that
\begin{align*}
\overline{RV}_{n,i}\left(\frac{1}{\absRii^{3/4}}-\frac{1}{\absZii^{3/4}}\right)=\overline{RV}_{n,i}\left(\frac{\absZii^{3/4}-\absRii^{3/4}}{\absRii^{3/4}\absZii^{3/4}}\right)\,,
\end{align*}
such that for every $\delta>0$ we have
\begin{align}
&\mathbb{P}\left[\maxi\sqrt{\an\log\left(n\right)}\left|\overline{RV}_{n,i}\left(\frac{1}{\absRii^{3/4}}-\frac{1}{\absZii^{3/4}}\right)\right|>\delta\right]\nonumber\\
&=\mathbb{P}\left[\maxi\sqrt{\an\log\left(n\right)}\left|\overline{RV}_{n,i}\left(\frac{\absZii^{3/4}-\absRii^{3/4}}{\absRii^{3/4}\absZii^{3/4}}\right)\right|>\delta,\right.\nonumber\\
&\quad\quad\quad\quad\quad\quad\quad\quad\quad\quad\quad\quad\quad\quad\quad\left.\mini\absRii\absZii\geq\frac{\left(\Kmk\right)^{2}}{2}\right]\nonumber\\
&+\mathbb{P}\left[\maxi\sqrt{\an\log\left(n\right)}\left|\overline{RV}_{n,i}\left(\frac{\absZii^{3/4}-\absRii^{3/4}}{\absRii^{3/4}\absZii^{3/4}}\right)\right|>\delta,\right.\nonumber\\
&\quad\quad\quad\quad\quad\quad\quad\quad\quad\quad\quad\quad\quad\quad\quad\left.\mini\absRii\absZii<\frac{\left(\Kmk\right)^{2}}{2}\right]\nonumber\\
&\leq \mathbb{P}\left[\maxi\sqrt{\an\log\left(n\right)}\left|\overline{RV}_{n,i}\left(\absZii^{3/4}-\absRii^{3/4}\right)\right|>4\delta\left(\Kmk\right)^{3/2}\right]\label{I1}\\
&\quad\quad\quad+\mathbb{P}\left[\mini\absRii\absZii<\frac{\left(\Kmk\right)^{2}}{4}\right]\,.\label{I2}
\end{align}
We start with the second probability \eqref{I2}. 
\begin{align*}
&\mathbb{P}\left[\mini\absRii\absZii<\frac{\left(\Kmk\right)^{2}}{4}\right]\\
&\quad \leq \mathbb{P}\left[\mini\absRii<\frac{\Kmk}{2}\right]+\mathbb{P}\left[\mini\absZii<\frac{\Kmk}{2}\right]
\end{align*}
The second probability has already been considered, since
\begin{align*}
\mathbb{P}\left[\mini\absZii<\frac{\Kmk}{2}\right]&=\mathbb{P}\left[\mini\Zii<\frac{\Kmk}{2}\right]=\mathcal{O}\left(B_{n}\right)\,.
\end{align*} 
Concerning the first one, it holds that
\begin{align*}
&\mathbb{P}\left[\mini\absRii<\frac{\Kmk}{2}\right]\leq \mathbb{P}\left[\mini\Rii<\frac{\Kmk}{2}\right]\\
&=\mathbb{P}\left[\mini\Rii<\frac{\Kmk}{2},\maxi\left|\Rii-\Zii\right|\leq \frac{\Kmk}{2}\right]\\
&+\mathbb{P}\left[\mini\Rii<\frac{\Kmk}{2},\maxi\left|\Rii-\Zii\right|> \frac{\Kmk}{2}\right]\\
&\leq \mathbb{P}\left[\mini\Zii<\Kmk\right]+\mathbb{P}\left[\maxi\left|\Rii-\Zii\right|> \frac{\Kmk}{2}\right]\,.
\end{align*}
Since 
\begin{align*}
\mathbb{P}\left[\maxi\left|\Rii-\Zii\right|> \frac{\Kmk}{2}\right]=\mathcal{O}\left(A_{n}\right)\,,
\end{align*}
we can proceed with \eqref{I1}. For every $\delta>0$ and $\kp\in\left(\Kp,\infty\right)$, it holds that 
\begin{align}
&\mathbb{P}\left[\maxi\sqrt{\an\log\left(n\right)}\left|\overline{RV}_{n,i}\left(\absZii^{3/4}-\absRii^{3/4}\right)\right|>4\delta\left(\Kmk\right)^{3/2}\right]\nonumber\\
&=\mathbb{P}\left[\maxi\sqrt{\an\log\left(n\right)}\left|\overline{RV}_{n,i}\left(\absZii^{3/4}-\absRii^{3/4}\right)\right|>4\delta\left(\Kmk\right)^{3/2},\right.\label{I11}\\
&\hspace*{8.5cm}\left.\maxi \overline{Z}_{n,i}\leq\Kp+\kp\right]\nonumber\\
&+\mathbb{P}\left[\maxi\sqrt{\an\log\left(n\right)}\left|\overline{RV}_{n,i}\left(\absZii^{3/4}-\absRii^{3/4}\right)\right|>4\delta\left(\Kmk\right)^{3/2},\right.\label{I12}\\
&\hspace*{8.5cm}\left.\maxi \overline{Z}_{n,i}>\Kp+\kp\right]\nonumber
\end{align}
We start with \eqref{I11}. 
\begin{align}
&\mathbb{P}\left[\maxi\sqrt{\an\log\left(n\right)}\left|\overline{RV}_{n,i}\left(\absZii^{3/4}-\absRii^{3/4}\right)\right|>4\delta\left(\Kmk\right)^{3/2},\right.\nonumber\\
&\quad\quad\quad\quad\quad\quad\quad\quad\quad\quad\quad\quad\quad\quad\quad\left.\maxi \overline{Z}_{n,i}\leq\Kp\hspace{-0.4em}+\kp\right]\nonumber\\
&\leq \mathbb{P}\left[\maxi\left|\overline{RV}_{n,i}\right|>2\left(\Kp\hspace{-0.4em}+\kp\right)\right]\nonumber\\
&+\mathbb{P}\left[\maxi\sqrt{\an\log\left(n\right)}\left|\left(\absZii^{3/4}-\absRii^{3/4}\right)\right|>\frac{2\delta\left(\Kmk\right)^{3/2}}{\Kp\hspace{-0.4em}+\kp}\right]\nonumber\\
&\leq \mathbb{P}\left[\maxi\left|\overline{RV}_{n,i}-\overline{Z}_{n,i}\right|+\left|\overline{Z}_{n,i}\right|>2\left(\Kp+\kp\right)\right]\nonumber\\
&+\mathbb{P}\left[\maxi\sqrt{\an\log\left(n\right)}\left|\left(\absZii^{3/4}-\absRii)^{3/4}\right)\right|>\frac{2\delta\left(\Kmk\right)^{3/2}}{\Kp\hspace{-0.4em}+\kp}\right]\nonumber\\
&\leq \mathbb{P}\left[\maxi\left|\overline{RV}_{n,i}-\overline{Z}_{n,i}\right|>\Kp\hspace{-0.4em}+\kp\right]+\mathbb{P}\left[\maxi\left|\overline{Z}_{n,i}\right|>\Kp\hspace{-0.4em}+\kp\right]\label{I111}\\
&+\mathbb{P}\left[\maxi\sqrt{\an\log\left(n\right)}\left|\left(\absZii^{3/4}-\absRii^{3/4}\right)\right|>\frac{2\delta\left(\Kmk\right)^{3/2}}{\Kp+\kp}\right]\label{I121}
\end{align}
Note that
\begin{align*}
\mathbb{P}\left[\maxi\left|\overline{Z}_{n,i}\right|>\Kp+\kp\right]\leq\sum_{i=0}^{{\lfloor \left(\ah\right)^{-1}\rfloor}-2}\mathbb{P}\left[\left|\overline{Z}_{n,i}\right|>\Kp+\kp\right]
\end{align*}
holds. We proceed with the triangle inequality and using that $\Kp-\sigma^{2}_{i\ah}>0$ uniformly in $i$,
\begin{align*}
&\mathbb{P}\left[\maxi\left|\overline{Z}_{n,i}\right|>\Kp+\kp\right]\leq\mathbb{P}\left[\left|\frac{1}{\an}\sum_{\ell=1}^{\an}\sum_{j=1}^{\nh}\w\sigma^{2}_{i\ah}\big(\SW-1\big)\right|>\kp\right]\\
&\quad\quad\quad\quad\quad\quad\quad\quad\quad\quad\quad\quad+\mathbb{P}\left[\left|\frac{1}{\an}\sum_{\ell=1}^{\an}\sum_{j=1}^{\nh}\w\sigma_{i\ah}\SW\Se\right|>\kp\right]\\
&\quad\quad\quad\quad\quad\quad\quad\quad\quad\quad\quad\quad+\mathbb{P}\left[\left|\frac{1}{\an}\sum_{\ell=1}^{\an}\sum_{j=1}^{\nh}\w\big(\Se-\mu_{ij\ell}\big)\right|>\kp\right]\,.
\end{align*}
Applying the Markov inequality, bounding the volatility from above, and concluding with a classical central limit theorem argument, yields the bound 
\begin{align*}
\mathbb{P}\left[\left|\overline{Z}_{n,i}\right|>\Kp+\kp\right]=\mathcal{O}\left(\an^{-r/2}\right)\,,
\end{align*}
such that
\begin{align*}
\mathbb{P}\left[\maxi\left|\overline{Z}_{n,i}\right|>\Kp+\kp\right]&=\mathcal{O}\left(\an^{-r/2}\left(\ah\right)^{-1}\right)=\KLEINO(1)\,,\text{ as }n\rightarrow\infty\,,
\end{align*}
holds if the exponent $r>0$ is sufficiently large. This completes \eqref{I111}, since the first probability therein is included in $A_{n}$. \\
We proceed with \eqref{I121}. The discussion of this term can be traced back to $A_{n}$ with a Taylor expansion. More precisely, 
we set $\psi\left(x\right)=x^{3/4}$ and expand around the point $a=\absZii$,
\begin{align*}
\psi\left(\absRii\right)-\psi\left(\absZii\right)&=\psi'\left(\absZii\right)\left(\absRii-\absZii\right)\\
&+\left(\absRii-\absZii\right)\mathcal{R}\left(\absRii-\absZii\right)\,.
\end{align*}
Since $\begin{aligned}
\psi'\left(\absZii\right)=\mathcal{O}_{\mathbb{P}}\left(1\right)
\end{aligned}$, and since the remainder $\mathcal{R}$ is negligible, 
\begin{align*}
\mathcal{R}\left(\absRii-\absZii\right)=\KLEINO_{\mathbb{P}}(1)\,,
\end{align*}
by the reverse triangle inequality and the estimates for $A_{n}$.
Therefore, only 
\begin{align*}
\absRii-\absZii
\end{align*}
is crucial. But, using the reverse triangle inequality again this has already been worked out in $A_{n}$, too. So we have completed \eqref{I121} and so \eqref{I11}. We proceed with \eqref{I12}.
It holds that
\begin{align*}
&\mathbb{P}\left[\maxi\sqrt{\an\log\left(n\right)}\left|\overline{RV}_{n,i}\left(\absZii^{3/4}-\absRii^{3/4}\right)\right|>4\delta\left(\Kmk\right)^{3/2},\right.\nonumber\\
&\hspace*{8.5cm}\left.\maxi \overline{Z}_{n,i}>\Kp+\kp\right]\\
&\leq\mathbb{P}\left[\maxi \overline{Z}_{n,i}>\Kp+\kp\right]\leq \mathbb{P}\left[\maxi \left|\overline{Z}_{n,i}\right|>\Kp+\kp\right]\,.
\end{align*}
Thus, this probability has already been considered within \eqref{I111}. Therefore, we also have completed \eqref{I12}, such that we are done with $(\textbf{I})$
The terms $(\textbf{III})$ and $\left(\textbf{IV}\right)$ in \eqref{eq:1} are only shifted in $i$. So we have finished the proof of Proposition \ref{prop1}.
\qed
\\
For the \emph{second step} described in Section \ref{subsec:4.2} we approximate the volatility locally constant over two consecutive blocks by shifting the index of $\sigma_{(i+1)\alpha_nh_n}$ in $\overline{Z}_{n,i+1}$ as follows: $i+1\mapsto i$. We set
\begin{align*}
\widetilde{Z}_{n,i}:=\frac{1}{\an}\sum_{\ell=1}^{\an}\sum_{j=1}^{\nh}w_{ij\ell}\left(\left(\sigma_{\left(i-1\right)\ah}S_{ij\ell}\left(W\right)+S_{ij\ell}\left(\varepsilon\right)\right)^{2}-\mu_{ij\ell}\right)\,.
\end{align*}
\begin{prop}\label{prop2}
Given the assumptions of Theorem \ref{thm:1}, it holds under \ref{hypo} that
\begin{align*}
\sqrt{\an\log\left(h_n^{-1}\right)}\maxi\left|\bigg|\frac{\overline{Z}_{n,i}-\overline{Z}_{n,i+1}}{\absZii^{3/4}}\bigg|-\bigg|\frac{\Zi-\Ziiw}{\absZiiw^{3/4}}\bigg|\right|\overset{\mathbb{P}}{\longrightarrow}0\,.
\end{align*}
\end{prop}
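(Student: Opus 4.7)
\noindent\textit{Proof proposal for Proposition \ref{prop2}.} The plan is to reuse the scheme of Proposition \ref{prop1}: reduce the maximum of the difference of quotients to a uniform control of $\maxi|\Zii-\Ziiw|$, and absorb the denominator change by a first-order Taylor expansion. Writing $A_i=\Zi-\Zii$ and $A_i'=\Zi-\Ziiw$, the algebraic identity $A_i/|\Zii|^{3/4}-A_i'/|\Ziiw|^{3/4}=A_i(|\Zii|^{-3/4}-|\Ziiw|^{-3/4})+(A_i-A_i')|\Ziiw|^{-3/4}$ together with the reverse triangle inequality yields
\[
\maxi\left|\frac{|A_i|}{\absZii^{3/4}}-\frac{|A_i'|}{\absZiiw^{3/4}}\right|\le \maxi\frac{|\Zii-\Ziiw|}{\absZiiw^{3/4}}+\maxi|\Zi-\Zii|\left|\frac{1}{\absZii^{3/4}}-\frac{1}{\absZiiw^{3/4}}\right|.
\]
As in Proposition \ref{prop1}, I would localize to the event where $\mini(\absZii\wedge\absZiiw)\ge\Kmk$ and $\maxi(|\Zi|\vee|\Zii|)\le\Kp+\kp$; its complement has vanishing probability by the Markov/central-limit-theorem computations already carried out for the term $B_n$ and for \eqref{I111}, since $\Ziiw$ has exactly the same structure as $\Zii$ with only a different shift of the volatility constant.

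The core estimate is on $\Ziiw-\Zii$. Using the identity $(a+b)^2-(c+b)^2=(a^2-c^2)+2(a-c)b$ with $a=\sigma_{i\ah}\SW$, $c=\sigma_{(i+1)\ah}\SW$ and $b=\Se$,
\begin{align*}
\Ziiw-\Zii&=\frac{1}{\an}\sum_{\ell=1}^{\an}\sum_{j=1}^{\nh}w_{(i+1)j\ell}\Bigl[\bigl(\sigma_{i\ah}^2-\sigma_{(i+1)\ah}^2\bigr)S_{(i+1)j\ell}^2(W)\\
&\qquad+2\bigl(\sigma_{i\ah}-\sigma_{(i+1)\ah}\bigr)S_{(i+1)j\ell}(W)\,S_{(i+1)j\ell}(\varepsilon)\Bigr].
\end{align*}
Hypothesis \ref{hypo} with $L_n=\mathcal{O}(\log n)$ supplies $|\sigma_{i\ah}-\sigma_{(i+1)\ah}|=\mathcal{O}_{\P}(L_n(\ah)^\aalpha)$, and global boundedness of $\sigma$ gives the same order for the squared difference. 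A Markov inequality at an even exponent $r=2m$, followed by Jensen, Minkowski and Burkholder estimates on the inner sums, reusing verbatim the manipulations applied to $A_n^2$ in Proposition \ref{prop1} and invoking \eqref{momnoise} to bound moments of $\Se$, then produces
\[
\sqrt{\an\log(h_n^{-1})}\,\maxi|\Ziiw-\Zii|\overset{\P}{\longrightarrow}0
\]
by condition \eqref{eq:an2}, provided $m$ is taken large enough to absorb the union-bound loss over the $m_n\asymp(\ah)^{-1}$ blocks.

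For the second summand, a first-order Taylor expansion of $\psi(x)=x^{3/4}$ around $\absZiiw$ on the localization event yields $|\absZii^{-3/4}-\absZiiw^{-3/4}|\le C\,|\Zii-\Ziiw|$ with a deterministic constant $C$ depending only on $\Kmk$, exactly as in the treatment of \eqref{I121} in Proposition \ref{prop1}; the remainder in the expansion is dominated via the reverse triangle inequality and the bound on $\Zii-\Ziiw$ established in the previous paragraph. Since $\maxi|\Zi-\Zii|=\mathcal{O}(1)$ on the extended localization event, the denominator contribution reduces to the quantity already controlled. The main obstacle is precisely the uniform-in-$i$ step: the polylogarithmic factor generated by $L_n$ together with the scaling $\sqrt{\an\log(h_n^{-1})}$ and the union-bound loss $(\ah)^{-1}$ must all be absorbed, which is where the full strength of the moment condition \eqref{momnoise} enters and forces $m$ to be taken arbitrarily large. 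Apart from this, every estimate required has already been set up in the proof of Proposition \ref{prop1}. \qed
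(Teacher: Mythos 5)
Your proposal is essentially the paper's own proof. The decomposition you use, $A_i|\Zii|^{-3/4}-A_i'|\Ziiw|^{-3/4}=A_i\big(|\Zii|^{-3/4}-|\Ziiw|^{-3/4}\big)+(\Ziiw-\Zii)|\Ziiw|^{-3/4}$, is algebraically cleaner than but equivalent to the paper's three-term split into (\textbf{I}), (\textbf{II}), (\textbf{III}); both reduce the claim to localization on $\mini\absZiiw\ge\Kmk$ etc., a uniform bound on $\Ziiw-\Zii$ via the identity $(a+b)^2-(c+b)^2=(a^2-c^2)+2(a-c)b$, the regularity of $\sigma$ under \ref{hypo} to control $|\sigma^2_{i\ah}-\sigma^2_{(i+1)\ah}|$, and a Taylor expansion of the power function to absorb the denominator change into the same quantity.

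One small imprecision worth noting: for the core bound on $\maxi|\Ziiw-\Zii|$ you propose to reuse the $A_n^2$ machinery of Proposition \ref{prop1} ``verbatim.'' That is overkill here, since the volatility difference factors out as a deterministic-order scalar under \ref{hypo} rather than appearing inside a stochastic integral. The paper instead bounds the product directly: it shows $\maxi\sqrt{\an\log(n)}\,|\sigma^2_{i\ah}-\sigma^2_{(i+1)\ah}|=\KLEINO_{\P}(1)$ by \ref{hypo} and \eqref{eq:an2}, and then only needs $\maxi\frac{1}{\an}\sum_{\ell,j}w_{(i+1)j\ell}S^2_{(i+1)j\ell}(W)\le 2$ with probability tending to one (a crude constant bound via a Markov/CLT argument), rather than a vanishing moment bound. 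Your heavier-handed route still works, since high moments are available by \eqref{momnoise}, but the simpler factor-out-and-bound argument is what the paper uses and is the more natural choice given the structure. Apart from this, the two arguments coincide.
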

\noindent
\textit{Proof of Proposition \ref{prop2}}.\\
The decomposition 
\begin{align*}
&\frac{\overline{Z}_{n,i}-\overline{Z}_{n,i+1}}{\absZii^{3/4}}-\frac{\overline{Z}_{n,i}-\widetilde{Z}_{n,i+1}}{\absZiiw^{3/4}}\\
&=\frac{\overline{Z}_{n,i}}{\absZii^{3/4}}-\frac{\overline{Z}_{n,i}}{\absZiiw^{3/4}}+\frac{\overline{Z}_{n,i+1}}{\absZiiw^{3/4}}-\frac{\overline{Z}_{n,i+1}}{\absZii^{3/4}}+\frac{\widetilde{Z}_{n,i+1}-\overline{Z}_{n,i+1}}{\absZii^{3/4}}
\end{align*}
yields, via the triangle inequality, the three terms 
\begin{align*}
&\maxi\Bigg|\overline{Z}_{n,i}\bigg(\frac{1}{\absZii^{3/4}}-\frac{1}{\absZiiw^{3/4}}\bigg)\Bigg|+\maxi\Bigg|\overline{Z}_{n,i+1}\bigg(\frac{1}{\absZiiw^{3/4}}-\frac{1}{\absZii^{3/4}}\bigg)\Bigg|\\
&\quad\quad\quad\quad+\maxi\left|\frac{\widetilde{Z}_{n,i+1}-\overline{Z}_{n,i+1}}{\absZii^{3/4}}\right|\\
&=:\text{(\textbf{I})}+\text{(\textbf{II})}+\text{(\textbf{III})}\,.
\end{align*}
We start with (\textbf{III}). For any $\delta>0$ it holds that 
\begin{align}
&\mathbb{P}\left[\maxi\left|\frac{\sqrt{n\log\left(n\right)}\left(\widetilde{Z}_{n,i+1}-\overline{Z}_{n,i+1}\right)}{\absZiiw^{3/4}}\right|>\delta\right]\nonumber\\
&=\mathbb{P}\left[\maxi\left|\frac{\sqrt{n\log\left(n\right)}\left(\widetilde{Z}_{n,i+1}-\overline{Z}_{n,i+1}\right)}{\absZiiw^{3/4}}\right|>\delta,\mini\absZiiw\geq\Kmk\right]\nonumber\\
&+\mathbb{P}\left[\maxi\left|\frac{\sqrt{n\log\left(n\right)}\left(\widetilde{Z}_{n,i+1}-\overline{Z}_{n,i+1}\right)}{\absZiiw^{3/4}}\right|>\delta,\mini\absZiiw<\Kmk\right]\nonumber\\
&\leq \mathbb{P}\left[\maxi\sqrt{n\log\left(n\right)}\left|\widetilde{Z}_{n,i+1}-\overline{Z}_{n,i+1}\right|>\delta\big(\Kmk\big)^{3/4}\right]\label{prop2III1}\\
&+\mathbb{P}\left[\mini\absZiiw<\Kmk\right]\label{prop2III2}\,.
\end{align}
The probability \eqref{prop2III2} has already been done, since it only differs by a shift in $i$ with respect to the volatility from the term in Proposition \ref{prop1}. We continue with \eqref{prop2III1}.
It holds that
\begin{align*}
\widetilde{Z}_{n,i}-\overline{Z}_{n,i}&=\left(\sigma^{2}_{(i-1)\ah}-\sigma^{2}_{i\ah}\right)\frac{1}{\an}\sum_{\ell=1}^{\an}\sum_{j=1}^{\nh}\w\SWsq\\
&+\left(\sigma_{(i-1)\ah}-\sigma_{i\ah}\right)\frac{1}{\an}\sum_{\ell=1}^{\an}\sum_{j=1}^{\nh}\w\SW\Se\,.
\end{align*}
That yields
\begin{align*}
&\mathbb{P}\left[\maxi\sqrt{\an\log\left(n\right)}\left|\widetilde{Z}_{n,i+1}-\overline{Z}_{n,i+1}\right|>\delta\big(\Kmk\big)^{3/4}\right]\\
&\leq \mathbb{P}\left[\maxi\sqrt{\an\log\left(n\right)}\left(\sigma^{2}_{i\ah}-\sigma^{2}_{(i+1)\ah}\right)\right.\\
&\quad\quad\quad\quad\quad\quad\quad\quad\times\left.\frac{1}{\an}\left|\sum_{\ell=1}^{\an}\sum_{j=1}^{\nh}w_{(i+1)j\ell}S^{2}_{(i+1)j\ell}\left(W\right)\right|>\frac{\big(\Kmk\big)^{3/4}}{2}\right]\\
&+\mathbb{P}\left[\maxi\sqrt{\an\log\left(n\right)}\left(\sigma_{i\ah}-\sigma_{(i+1)\ah}\right)\right.\\
&\quad\quad\quad\quad\quad\quad\quad\quad\times\left.\frac{1}{\an}\left|\sum_{\ell=1}^{\an}\sum_{j=1}^{\nh}w_{(i+1)j\ell}S_{(i+1)j\ell}\left(W\right)S_{(i+1)j\ell}\left(\varepsilon\right)\right|>\frac{\big(\Kmk\big)^{3/4}}{4}\right]\\
&\leq \mathbb{P}\left[\maxi\sqrt{\an\log\left(n\right)}\left|\sigma^{2}_{i\ah}-\sigma^{2}_{(i+1)\ah}\right|>\frac{\big(\Kmk\big)^{3/4}}{4}\right]\\
&+\mathbb{P}\left[\maxi\frac{1}{\an}\sum_{\ell=1}^{\an}\sum_{j=1}^{\nh}w_{(i+1)j\ell}S^{2}_{(i+1)j\ell}\left(W\right)>2\right]\,.
\end{align*}
Concerning the first term it holds that
\begin{align*}
\maxi\sqrt{\an\log\left(n\right)}\left|\sigma^{2}_{i\ah}-\sigma^{2}_{(i+1)\ah}\right|&=\mathcal{O}_{\mathbb{P}}\left(\sqrt{\an\log\left(n\right)}\left(\ah\right)^{\mathfrak{a}}\right)\,,\text{ uniformly in }i\\
&=\KLEINO_{\P}(1)\,,\text{ as }n\rightarrow\infty\,,\text{ by }\eqref{eq:an2}.
\end{align*}
It remains to show that 
\begin{align*}
\mathbb{P}\left[\maxi\frac{1}{\an}\sum_{\ell=1}^{\an}\sum_{j=1}^{\nh}w_{(i+1)j\ell}S^{2}_{(i+1)j\ell}\left(W\right)>2\right]=\KLEINO(1)\,.
\end{align*}
We conclude with a classical central limit theorem argument, using Markov's inequality with $r>0$.
\begin{align*}
&\mathbb{P}\left[\maxi\frac{1}{\an}\sum_{\ell=1}^{\an}\sum_{j=1}^{\nh}w_{(i+1)j\ell}S^{2}_{(i+1)j\ell}\left(W\right)>2\right]\\
&=\mathbb{P}\left[\maxi\frac{1}{\an}\sum_{\ell=1}^{\an}\sum_{j=1}^{\nh}w_{(i+1)j\ell}\big(S^{2}_{(i+1)j\ell}\left(W\right)-1\big)>1\right]\\
&\leq\left(\ah\right)^{-1}\mathbb{E}\left[\frac{1}{\an^{r/2}}\left|\frac{1}{\sqrt{\an}}\sum_{j=1}^{\nh}w_{(i+1)j\ell}\big(S^{2}_{(i+1)j\ell}\left(W\right)-1\big)\right|^{r}\right]\\
&=\mathcal{O}\left(\left(\ah\right)^{-1}\an^{-r/2}\right)=\KLEINO(1)\,,\text{ as }n\rightarrow\infty\,,
\end{align*}
with $r>0$ sufficiently large. We have completed \eqref{prop2III1} and so (\textbf{III}).\\
We proceed with (\textbf{I}). For any $\delta>0$ it holds that
\begin{align}
&\mathbb{P}\left[\maxi\left|\overline{Z}_{n,i}\left(\frac{1}{\absZii^{3/4}}-\frac{1}{\absZiiw^{3/4}}\right)\right|>\delta\right]\nonumber\\
&=\mathbb{P}\left[\maxi\left|\overline{Z}_{n,i}\left(\frac{\absZiiw^{3/4}-\absZii^{3/4}}{\absZii^{3/4}\absZiiw^{3/4}}\right)\right|>\delta\right]\nonumber\\
&=\mathbb{P}\left[\maxi\left|\overline{Z}_{n,i}\left(\frac{\absZiiw^{3/4}-\absZii^{3/4}}{\absZii^{3/4}\absZiiw^{3/4}}\right)\right|>\delta,\mini\absZii\absZiiw\geq\left(\Kmk\right)^{2}\right]\nonumber\\
&+\mathbb{P}\left[\maxi\left|\overline{Z}_{n,i}\left(\frac{\absZiiw^{3/4}-\absZii^{3/4}}{\absZii^{3/4}\absZiiw^{3/4}}\right)\right|>\delta,\mini\absZii\absZiiw<\left(\Kmk\right)^{2}\right]\nonumber\\
&\leq \mathbb{P}\left[\maxi\left|\overline{Z}_{n,i}\left(\absZiiw^{3/4}-\absZii^{3/4}\right)\right|>\delta\left(\Kmk\right)^{3/2}\right]\label{prop2I1}\\
&+\mathbb{P}\left[\mini\absZii\absZiiw<\left(\Kmk\right)^{2}\right]\label{prop2I2}\,.
\end{align}
We start with \eqref{prop2I2}.
\begin{align*}
&\mathbb{P}\left[\mini\absZii\absZiiw<\left(\Kmk\right)^{2}\right]\\
&\leq \mathbb{P}\left[\mini\absZii<\Kmk\right]+\mathbb{P}\left[\mini\absZiiw<\Kmk\right]\,.
\end{align*}
Only the second probability has to be considered. But, since the involved statistic only differs by a shift in the volatility, we can bound the latter from below and argue with the central limit theorem. So we have completed \eqref{prop2I2} and continue with \eqref{prop2I1}.\\
We handle \eqref{prop2I1} via a Taylor expansion. So, expanding the function $\psi\left(x\right)=x^{3/4}$ around the point $a=|\widetilde{Z}_{n,i+1}|$ yields the desired result using the procedure for (\textbf{III}). We will omit the details for (\textbf{II}), since it only differs by a shift in $i$. So Proposition \ref{prop2} is proven.
\qed
\\
We do a further approximation step, replacing the denominator in Proposition \ref{prop2} by its limit. This is the \emph{third step} outlined in Section \ref{subsec:4.2}. Here, we use the estimator $\hat{\eta}^{2}$ from \eqref{etahat}.
\begin{prop}\label{prop3n}
Given the assumptions of Theorem \ref{thm:1}, it holds under \ref{hypo} that
\begin{align*}
\sqrt{\an\log\left(h_n^{-1}\right)}\maxi\left|\left|\frac{\Zi-\Ziiw}{\sqrt{8\hat{\eta}}\absZiiw^{3/4}}\right|-\left|\frac{\Zi-\Ziiw}{\sqrt{8\eta}\sigma^{3/2}_{i\ah}}\right|\right|\overset{\mathbb{P}}{\longrightarrow}0\,.
\end{align*}
\end{prop}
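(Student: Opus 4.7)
The plan is to apply the reverse triangle inequality to factor out the common numerator $\Zi-\Ziiw$, reducing the claim to
\[\sqrt{\an\log(h_n^{-1})}\,\maxi|\Zi-\Ziiw|\cdot\left|\frac{1}{\sqrt{8\hat{\eta}}\,\absZiiw^{3/4}}-\frac{1}{\sqrt{8\eta}\,\sigma^{3/2}_{i\ah}}\right|\overset{\P}{\longrightarrow}0.\]
I would then split the reciprocal difference into two pieces,
\[\frac{1}{\absZiiw^{3/4}}\left(\frac{1}{\sqrt{8\hat{\eta}}}-\frac{1}{\sqrt{8\eta}}\right)+\frac{1}{\sqrt{8\eta}}\left(\frac{1}{\absZiiw^{3/4}}-\frac{1}{\sigma^{3/2}_{i\ah}}\right),\]
and restrict to the event $\{\mini\absZiiw\ge K^--\kappa^-\}$; its complement is handled by the same central-limit-type moment bound and union bound that took care of $B_n$ in the proof of Proposition \ref{prop1}, up to a shift of $i$ by one.

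For the first summand, a Taylor expansion of $x\mapsto x^{-1/2}$ at $\eta$, combined with $\hat\eta^2-\eta^2=\mathcal{O}_{\P}(n^{-1/2})$ from \eqref{etahat}, yields $|\hat{\eta}^{-1/2}-\eta^{-1/2}|=\mathcal{O}_{\P}(n^{-1/2})$, while $\absZiiw^{-3/4}$ is uniformly bounded on the good event. I expect $\maxi|\Zi-\Ziiw|=\mathcal{O}_{\P}(\an^{-1/2}\log^{c}(n))$ for arbitrarily small $c>0$, obtained from Markov's inequality applied with an arbitrarily large even moment $r=2m$ together with a union bound over the $m_n$ indices, exactly in the spirit of $A_n^{2,1}$ in Proposition \ref{prop1}; both $\Zi$ and $\Ziiw$ approximate $\sigma^2_{i\ah}$, so their difference has conditional variance of order $\an^{-1}$. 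The overall contribution of this summand is then $\mathcal{O}_{\P}(\log^{1+c}(n)/\sqrt{n})=\KLEINO_{\P}(1)$ thanks to $\hn\propto n^{-1/2}\log(n)$.

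For the second summand, I would Taylor-expand $\psi(x)=x^{-3/4}$ around $a=\sigma^2_{i\ah}$, which is uniformly bounded away from $0$ by Assumption \ref{assVola}; the derivative is uniformly bounded and the remainder is absorbed exactly as in the Taylor step leading to \eqref{I121}. This reduces the task to controlling $\maxi|\Ziiw-\sigma^2_{i\ah}|$. Writing $\Ziiw-\sigma^2_{i\ah}$ as the sum of the Brownian-only part $\sigma^2_{i\ah}\an^{-1}\sum_{\ell,j}w_{(i+1)j\ell}(S^2_{(i+1)j\ell}(W)-1)$, the Brownian-noise cross term, and the centered noise part $\an^{-1}\sum_{\ell,j}w_{(i+1)j\ell}(S^2_{(i+1)j\ell}(\varepsilon)-\mu_{(i+1)j\ell})$, each of variance $\mathcal{O}(\an^{-1})$ by the oracle weights and the decay \eqref{wOrder}, Markov's inequality with a large even exponent $r=2m$ and a union bound over $m_n=\mathcal{O}(\hn^{-1})$ yield $\maxi|\Ziiw-\sigma^2_{i\ah}|=\KLEINO_{\P}(h_n^{\varpi/2})$ in view of $h_n^{-\varpi}/\an\to 0$ from \eqref{eq:an2}. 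Multiplying by $\sqrt{\an\log(h_n^{-1})}\cdot\maxi|\Zi-\Ziiw|$ produces a bound of $\KLEINO_{\P}(\log^{c'}(n)\cdot h_n^{\varpi/2})=\KLEINO_{\P}(1)$. The hard part will be the calibration of the Markov exponent $m$, which has to be taken large enough that both $\maxi|\Zi-\Ziiw|$ and $\maxi|\Ziiw-\sigma^2_{i\ah}|$ are sharp enough to beat the growth factor $\sqrt{\an\log(h_n^{-1})}$; the moment condition \eqref{momnoise} permits arbitrary $m$ and the spectral weight decay \eqref{wOrder} delivers the required $\an^{-1}$ variance scaling, so the argument closes in the same spirit as the treatments of $A_n^{2,1}$ and $B_n$ in Proposition \ref{prop1}.
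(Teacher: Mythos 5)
Your proposal takes essentially the same route as the paper's proof: factor out the common numerator, restrict to the event $\{\min_i|\widetilde{Z}_{n,i+1}|\ge K^--\kappa^-\}$, Taylor-expand the denominator difference, use $\hat{\eta}-\eta=\mathcal{O}_{\P}(n^{-1/2})$ for the noise part, and control $\max_i|\widetilde{Z}_{n,i+1}-\sigma^2_{i\alpha_n h_n}|$ uniformly for the volatility part. The only cosmetic differences are that the paper performs a single two-dimensional Taylor expansion of $\psi(x,y)=x^{-1/2}y^{-3/4}$ where you do two one-dimensional ones, and that the paper invokes the uniform spot-volatility error bound of Bibinger--Rei\ss{} while you re-derive an analogous bound from Markov's inequality with a large moment and a union bound. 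One small quantitative slip: a union bound over the $m_n=\mathcal{O}((\alpha_n h_n)^{-1})$ blocks with a fixed even moment $2m$ gives $\max_i|\overline{Z}_{n,i}-\widetilde{Z}_{n,i+1}|=\mathcal{O}_{\P}\big(\alpha_n^{-1/2}\,n^{\epsilon}\big)$ for arbitrarily small $\epsilon>0$, not the $\log^{c}(n)$ factor you state, since $m_n$ grows polynomially rather than polylogarithmically in $n$ (the same caveat applies to the claimed $\KLEINO_{\P}(h_n^{\varpi/2})$ for $\max_i|\widetilde{Z}_{n,i+1}-\sigma^2_{i\alpha_n h_n}|$, which should be $h_n^{\varpi'/2}$ for $\varpi'<\varpi$). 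These slips do not damage the argument: the resulting $n^{\epsilon}$ factors are still dominated by the $n^{-1/2}$ (first summand) and $h_n^{\varpi'/2}$ (second summand) decays once $\epsilon$ is chosen small enough, so both contributions tend to zero as you conclude.
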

\noindent
\textit{Proof of Proposition \ref{prop3n}.}\\
We have to bound the term
\begin{align*}
&\maxi\sqrt{\an\log\left(n\right)}\left|\left(\overline{Z}_{n,i}-\widetilde{Z}_{n,i+1}\right)\left(\frac{1}{\sqrt{8\hat{\eta}}\absZiiw^{3/4}}-\frac{1}{\sqrt{8\eta}\sigma^{3/2}_{i\an h_{h}}}\right)\right|\\
&=\maxi\sqrt{\an\log(n)}\left|\Zi-\Ziiw\right|\maxi\left|\frac{1}{\sqrt{8\hat{\eta}}\absZiiw^{3/4}}-\frac{1}{\sqrt{8\eta}\sigma^{3/2}_{i\an h_{h}}}\right|\,.
\end{align*}
We will employ a 2-dimensional Taylor expansion of order 1 with respect to the second term. We set
$\psi(x,y)=x^{-1/2}y^{-3/4}$ and expand around the point $(a,b)=(\eta,\sigma^{2}_{i\ah})$.
Therefore, we have to bound the term
\begin{align}\label{taylor}
\frac{\partial{\psi(\eta,\sigma^{2}_{i\ah})}}{\partial{x}}\left(\hat{\eta}-\eta\right)+\frac{\partial{\psi(\eta,\sigma^{2}_{i\ah})}}{\partial{y}}\Big(\absZiiw-\sigma^{2}_{i\ah}\Big)+\KLEINO_{\mathbb{P}}(1).
\end{align}
Since $(\sigma^{2}_{t})_{t\in[0,1]}$ can be bounded globally, we get the following uniform bounds in $i$:
\begin{align*}
\max\bigg(\frac{\partial{\psi(\eta,\sigma^{2}_{i\ah})}}{\partial{x}}\,,\frac{\partial{\psi(\eta,\sigma^{2}_{i\ah})}}{\partial{y}}\bigg)=\mathcal{O}_{\mathbb{P}}(1)\,.
\end{align*}
The first summand in \eqref{taylor} with $\left(\hat{\eta}-\eta\right)$ can be handled easily, using 
\begin{align*}
\left(\hat{\eta}-\eta\right)=\mathcal{O}_{\mathbb{P}}(n^{-1/2})\,.
\end{align*}
This implies
\begin{align*}
\maxi\frac{\partial{\psi(\eta,\sigma^{2}_{i\ah})}}{\partial{x}}\left(\hat{\eta}-\eta\right)=\KLEINO_{\mathbb{P}}(1).
\end{align*}
Proceeding with the second term in \eqref{taylor}, we need a bound for the uniform error. It can be obtained in a similar (in fact easier) way as for the term $A_n$ in \eqref{eq:2}. Such a bound is already given in \cite{BibingerReiss} on page 10 for the estimators in \eqref{pilotsigmahat} with $J=1$, and readily extends to the case $J>1$. Since $\an\propto\hn^{\frac{-2\mathfrak{a}}{2\mathfrak{a}+1}}$ is the rate-optimal choice, we get with the upper bound from \cite{BibingerReiss} that
\begin{align*}
\maxi\frac{\partial{\psi(\eta,\sigma^{2}_{i\ah})}}{\partial{y}}\Big(\absZiiw-\sigma^{2}_{i\ah}\Big)&=\mathcal{O}_{\mathbb{P}}\left(\hn^{\frac{\mathfrak{a}}{2\mathfrak{a}+1}}\left(\log(\hn^{-1})\right)^{\frac{\mathfrak{a}}{2\mathfrak{a}+1}}\right)=\KLEINO_{\mathbb{P}}(1)\,.
\end{align*}
Proceeding with the term 
\begin{align*}
\maxi\sqrt{\an\log(n)}\left|\Zi-\Ziiw\right|
\end{align*}
we conclude similarly with the triangle inequality,
\begin{align*}
&\maxi\sqrt{\an\log(n)}\left|\Zi-\Ziiw\right|\\
&\leq \maxi\sqrt{\an\log(n)}\left|\Zi-\sigma^{2}_{i\ah}\right|+\maxi\sqrt{\an\log(n)}\left|\Ziiw-\sigma^{2}_{i\ah}\right|\,,
\end{align*}
the uniform bound applied to each summand and the regularity of $\left(\sigma^{2}_{t}\right)_{t\in\left[0,1\right]}$ under the null hypothesis \ref{hypo}. This implies
\begin{align*}
\maxi\sqrt{\an\log(n)}\left|\Zi-\Ziiw\right|=\mathcal{O}_{\mathbb{P}}(1)\,,
\end{align*}
such that the convergence in Proposition \ref{prop3n} follows. 
\qed\\
In order to conclude the convergence for the adaptive statistics in Theorem \ref{thm:1}, we have to show that replacing the oracle versions by the adaptive statistics does not affect the limit. It is sufficient to show the following for the \emph{fourth step} to complete the proof of the approximation steps mentioned in Section \ref{subsec:4.2}.
\begin{prop}\label{prop4n}
Given the assumptions of Theorem \ref{thm:1}, it holds under \ref{hypo} that
\begin{align*}
\sqrt{\an\log\left(h_n^{-1}\right)}\maxi\left|\overline{RV}^{ad}_{n,i}-\overline{RV}_{n,i}\right|\overset{\mathbb{P}}{\longrightarrow}0\,.
\end{align*}
\end{prop}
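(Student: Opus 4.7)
\textit{Proof proposal for Proposition \ref{prop4n}.}
My plan is to decompose $\overline{RV}^{ad}_{n,i}-\overline{RV}_{n,i}$ into two sources of error arising from the replacement of (i) the oracle weights $w_{ij\ell}$ by the feasible weights $\hat w_{ij\ell}$ (driven by the pilot estimator in \eqref{pilotsigmahat}) and (ii) the true noise variance $\eta^{2}$ by $\hat\eta^{2}$ from \eqref{etahat}. Writing $\mu_{ij\ell}=[\varphi_{ij\ell},\varphi_{ij\ell}]_n\eta^{2}/n$ and $\hat\mu_{ij\ell}=[\varphi_{ij\ell},\varphi_{ij\ell}]_n\hat\eta^{2}/n$, I split
\[
\hat\sigma^{2,ad}_{\left(\ahlla\right)}-\hat\sigma^{2}_{\left(\ahlla\right)}
=\sum_{j=1}^{\nh}\bigl(\hat w_{ij\ell}-w_{ij\ell}\bigr)\bigl(S^{2}_{ij\ell}(Y)-\mu_{ij\ell}\bigr)
-\sum_{j=1}^{\nh}\hat w_{ij\ell}\bigl(\hat\mu_{ij\ell}-\mu_{ij\ell}\bigr),
\]
and treat the two pieces after averaging over $\ell=1,\dots,\an$ and taking $\max_i$.

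For the noise-variance piece, the uniform estimates $\sum_{j=1}^{\nh}\hat w_{ij\ell}[\varphi_{ij\ell},\varphi_{ij\ell}]_n/n=\mathcal{O}_{\mathbb{P}}(1)$ (which follow from the explicit decay in \eqref{wOrder} and a direct evaluation of $[\varphi_{ij\ell},\varphi_{ij\ell}]_n$) together with $\hat\eta^{2}-\eta^{2}=\mathcal{O}_{\mathbb{P}}(n^{-1/2})$ from \eqref{etahat} give a per-block error of order $\mathcal{O}_{\mathbb{P}}(n^{-1/2})$. Since $\an\leq h_n^{-1}\propto n^{1/2}/\log(n)$ under \eqref{eq:an2} and \eqref{eq:hn}, multiplying by $\sqrt{\an\log(h_n^{-1})}$ yields $\mathcal{O}_{\mathbb{P}}(n^{-1/4}\sqrt{\log n})=\KLEINO_{\mathbb{P}}(1)$. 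The maximum over $i\leq(\ah)^{-1}$ is absorbed by a union bound combined with Markov's inequality using a sufficiently high even moment, in the same spirit as the bound on $A_n$ in the proof of Proposition \ref{prop1}.

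For the weight piece, I Taylor-expand $w_{ij\ell}$ as a function of $(\sigma^{2}_{\left(\ahlla\right)},\eta^{2})$ around the feasible substitutes delivered by \eqref{pilotsigmahat} and \eqref{etahat}. The pilot satisfies a uniform rate $\max_{k}|\tilde\sigma^{2}_{(k-1)h_n}-\sigma^{2}_{(k-1)h_n}|=\mathcal{O}_{\mathbb{P}}(n^{-\aalpha/(4\aalpha+2)}(\log n)^{1/2})$ (cf.\ the bound used in Proposition \ref{prop3n} and the estimates in \cite{BibingerReiss}), while the partial derivatives of $w_{ij\ell}$ in $\sigma^{2}$ and $\eta^{2}$ are $\mathcal{O}(w_{ij\ell})$ uniformly in $j$, by the explicit form of the oracle weights. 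Crucially, because $\sum_{j}w_{ij\ell}=\sum_{j}\hat w_{ij\ell}=1$, we may subtract $\sigma^{2}_{\left(\ahlla\right)}\sum_j(\hat w_{ij\ell}-w_{ij\ell})=0$ and rewrite
\[
\sum_{j}(\hat w_{ij\ell}-w_{ij\ell})\bigl(S^{2}_{ij\ell}(Y)-\mu_{ij\ell}\bigr)
=\sum_{j}(\hat w_{ij\ell}-w_{ij\ell})\bigl(S^{2}_{ij\ell}(Y)-\mu_{ij\ell}-\sigma^{2}_{\left(\ahlla\right)}\bigr),
\]
so that the inner factor is a centered quantity with stable variance $\mathcal{O}(1)$, of exactly the same structure as the terms handled in Proposition \ref{prop1}. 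Plugging in the Taylor expansion factorizes the error into a smoothness-of-weights piece that is locally measurable and of small order, multiplied by a centered weighted spectral statistic whose $r$-th moment is controlled via the same Burkholder/Minkowski/Jensen chain used earlier.

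Assembling the bounds, Markov's inequality with a high even moment $r=2m$ absorbs both the $(\ah)^{-1}$ factor from the union bound and the $\sqrt{\an\log n}$ prefactor, provided the product of the rate of the weight perturbation with $\sqrt{\an\log(h_n^{-1})}$ is $o(1)$. The delicate point, and what I expect to be the main obstacle, is precisely this: the pointwise pilot rate $n^{-\aalpha/(4\aalpha+2)}$ is not summable against $\sqrt{\an\log n}$ without the cancellation, so the argument must genuinely use the unit-sum property of $(w_{ij\ell})_j$ and $(\hat w_{ij\ell})_j$ to reduce the effective error to a centered spectral statistic whose variance gains an extra factor from the summation over $\ell=1,\dots,\an$ and the spectral cut-off $\nh$. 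Carrying this through with uniform bounds, as in the treatment of $A^{1,3b}_n,A^{1,3c}_n,A^{2,1}_n$ in Proposition \ref{prop1}, yields the desired convergence.
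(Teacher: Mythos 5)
Your proposal follows essentially the same route as the paper's proof: split the error into the noise-variance perturbation (replacing $\eta^2$ by $\hat\eta^2$) and the weight perturbation (replacing $w_{ij\ell}$ by $\hat w_{ij\ell}$), exploit the normalization $\sum_j w_{ij\ell}=\sum_j\hat w_{ij\ell}=1$ to re-center the inner factor $S^2_{ij\ell}(Y)-\mu_{ij\ell}$ to its fully centered version $S^2_{ij\ell}(Y)-\E[S^2_{ij\ell}(Y)]$, control the weight perturbation by a mean-value/Taylor argument in $\sigma^2$ using the uniform pilot rate from Proposition~\ref{prop3n}, and then close with moment bounds of the type used for $A_n$ in Proposition~\ref{prop1}. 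The paper does exactly this, treating the weights as a one-parameter family $w_j(\sigma^2)$ and using the derivative bound \eqref{Dw} with the mean value theorem. You also correctly flag the key technical point that the $\sqrt{\an}$ gain from the centered $\ell$-sum is indispensable: without it, $\sqrt{\an\log(h_n^{-1})}$ times the uniform pilot rate only gives a power of $\log(n)$, not $\KLEINO(1)$.

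Two minor imprecisions, neither fatal: you state $\sum_j\hat w_{ij\ell}[\varphi_{ij\ell},\varphi_{ij\ell}]_n/n=\mathcal{O}_{\P}(1)$, but the $j^2$ growth of $[\varphi_{ij\ell},\varphi_{ij\ell}]_n$ together with \eqref{wOrder} gives $\mathcal{O}_{\P}(\log(n))$ (still harmless, since the $\hat\eta^2-\eta^2=\mathcal{O}_{\P}(n^{-1/2})$ term is then $\mathcal{O}_{\P}(n^{-1/2}\log n)$ per block and $\sqrt{\an\log(h_n^{-1})}\,n^{-1/2}\log(n)=\KLEINO(1)$). Likewise, the derivative of $w_j(\cdot)$ in $\sigma^2$ carries an extra $\log^2(n)$ as recorded in \eqref{Dw}, not merely $\mathcal{O}(w_j)$; again this only costs polylogarithmic factors, which the $h_n^{\aalpha/(2\aalpha+1)}$ rate of the pilot absorbs after the $\sqrt{\an}$ cancellation. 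With these refinements made explicit, your argument coincides with the paper's.
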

\noindent
\textit{Proof of Proposition \ref{prop4n}}.\\
As we have argued above, $\eta^{2}$ can be replaced by the $\sqrt{n}$-rate consistent estimator \eqref{etahat} without affecting the limit behaviour of the statistics. Therefore it is sufficient to consider the plug-in estimation of the spot volatility in the weights $(w_{ij\ell})$. First of all, taking into account that the asymptotic order of the weights \eqref{wOrder} do not depend on $i,\ell$, we may consider them as a function $w_{j}=w_{j}(\sigma^{2})$ of the spot volatility. Calculating the first derivative, $w'_{j}$ as pursued on page 40 in \cite{Altmeyer2015}, we get the upper bound
\begin{align}\label{Dw}
w'(\sigma^{2})=\mathcal{O}_{\mathbb{P}}(w_{j}(\sigma^{2})\log^{2}(n))\,.
\end{align}
In order to bound $\maxi|\Ri^{ad}-\Ri|$, take into account that $\sum_{j}^{}w_{j}(x)=1$ for every $x$. So, it is sufficient to consider the term
\begin{align*}
\maxi\frac{1}{\an}\sum_{j=1}^{\nh}(w_{j}(\sigma^{2}_{i\ah})-w_{j}(\hat{\sigma}^{2}_{i\ah}))\sum_{\ell=1}^{\an}(S^{2}_{ij\ell}(Y)-\mathbb{E}[S^{2}_{ij\ell}(Y)])\,.
\end{align*}
The only difference compared with \cite{BibingerWinkel2018} and \cite{Altmeyer2015}, is to replace the point-wise $L^{1}$ bound for $|\hat{\sigma}^{2}_{i\ah}-{\sigma}^{2}_{i\ah}|$ by the uniform bound from Proposition \ref{prop3n}, with that the bound
\begin{align*}
|w_{j}(\sigma^{2}_{i\ah})-w_{j}(\hat{\sigma}^2_{i\ah})|=\mathcal{O}_{\mathbb{P}}(w_{j}(\sigma^{2})\log^{2}(n)\hn^{\frac{\mathfrak{a}}{2\mathfrak{a}+1}}\log(n)^{\frac{\mathfrak{a}}{2\mathfrak{a}+1}}).
\end{align*}
follows, using the mean value theorem and \eqref{Dw}.
\qed\\
The key, proving the last conclusion is to apply strong invariance principles by \cite{KMT2}. First of all, we have to take into account, that the rescaling factors in $\widetilde{U}'/\sqrt{8\eta}$ provide only an asymptotically distribution-free limit. So it is more adequate for our purpose to rescale with the exact finite-sample standard deviation, that is
\[2\bigg(\sum_{m=1}^{\nh}\left(\sigma^{2}_{i\ah}+\frac{\eta^{2}}{n}\left[\varphi_{mk},\varphi_{mk}\right]_{n}\right)^{-2}\bigg)^{-1}\,.\]
Using a Taylor approximation and the convergence of the above variances to $8\sigma_{i\ah}^3\eta$, presented in Section 6.2.\ of \cite{Altmeyer2015}, it is clear that the approximation holds.\\
Let $I_{i,\nu}$ and $\widetilde{I}_{i,\nu}$ be the exact finite-sample variances and define $\mathbb{L}^{\left(n\right)}_{i,\nu}$ and $\widetilde{\mathbb{L}}^{\left(n\right)}_{i,\nu}$ given by
\begin{align*}
\mathbb{L}^{\left(n\right)}_{i,\nu}&=\frac{\sum_{j=1}^{\nh}w_{ij\nu}\left(\left(\sigma_{i\ah}S_{j\nu}\left(W\right)+S_{j\nu}\left(\varepsilon\right)\right)^{2}-\mu_{i,\nu,j}\right)}{\sqrt{I_{i,\nu}}}\,,\\
\widetilde{\mathbb{L}}^{\left(n\right)}_{i,\nu}&=\frac{\sum_{j=1}^{\nh}w_{ij\nu}\left(\left(\sigma_{(i-1)\ah}S_{j\nu}\left(W\right)+S_{j\nu}\left(\varepsilon\right)\right)^{2}-\mu_{i,\nu,j}\right)}{\sqrt{\widetilde{I}_{i,\nu}}}\,.
\end{align*}
The distributions of $\left(\mathbb{L}^{\left(n\right)}_{i,\nu}\right)_{\nu}$ and $\left(\widetilde{\mathbb{L}}^{\left(n\right)}_{i+1,\nu}\right)_{\nu}$ do not depend on the volatility. Therefore, and due to the independence of Brownian increments, the latter are two independent families. Furthermore, the independence of Brownian increments also yields that each family itself forms an independent family in $\nu$. Taking into account the remark in \cite{KMT2} below Theorem 4, we can proceed as follows. Since we want to ensure the existence of a properly approximating independent Gaussian family $\left(Z_{i}\right)_{i}$, according to Theorem 4 in \cite{KMT2}, we have to pick a function $H$ such that 
\begin{align}\label{eq:kmtbed1}
\frac{H(x)}{x^{3+\delta}} \text{ is increasing for some } \delta>0\,,
\end{align}
\begin{align}\label{eq:kmtbed2}
\frac{\log\left(H\left(\left|x\right|\right)\right)}{x} \text{ is decreasing and}
\end{align}
\begin{align}\label{eq:kmtbed3}
\int_{}^{}H\left(\left|x\right|\right)\,d\mathbb{P}_{{\mathbb{L}}^{\left(n\right)}_{i,\nu}}<\infty\,.
\end{align}
We pick a power function $H$ and set $H(x)=\left|x\right|^{p}$ with some $p\geq 4$ such that \eqref{eq:kmtbed1} and \eqref{eq:kmtbed2} are fulfilled. For the latter condition \eqref{eq:kmtbed3}, by Jensen's inequality and Rosenthal's inequality, we require at this point \eqref{momnoise} up to $m=8$.
In order to control the remainder term in the approximation, we take into account that
\begin{align*}
&\maxi\left|\sum_{\nu=1}^{\left(i+1\right)\an}\left(\mathbb{L}^{\left(n\right)}_{i,\nu}-Z_{\nu}\right)-\sum_{\nu=1}^{i\an}\left(\mathbb{L}^{\left(n\right)}_{i,\nu}-Z_{\nu}\right)\right|\leq 4\cdot\maxi\left|\sum_{\nu=1}^{\left(i+1\right)\an}\left(\mathbb{L}^{\left(n\right)}_{i,\nu}-Z_{\nu}\right)\right|.
\end{align*}
Furthermore, the triangle inequality and the Markov inequality yield
\begin{align*}
&\mathbb{P}\left[\maxi\left|\sum_{\nu=1}^{\left(i+1\right)\an}\left(\mathbb{L}^{\left(n\right)}_{i,\nu}-Z_{\nu}\right)\right|\geq x_{n}\right]\\
&\leq \mathbb{P}\left[\maxi\sum_{\nu=1}^{\left(i+1\right)\an}\left|\mathbb{L}^{\left(n\right)}_{i,\nu}-Z_{\nu}\right|\geq x_{n}\right]\leq \sum_{\nu=1}^{h^{-1}_{n}}\mathbb{P}\left[\left|\mathbb{L}^{\left(n\right)}_{i,\nu}-Z_{\nu}\right|\geq x_{n}\right]\\
&\leq \sum_{\nu=1}^{h^{-1}_{n}}x^{-p}_{n}\mathbb{E}\left[\left|\mathbb{L}^{\left(n\right)}_{i,\nu}-Z_{\nu}\right|^{p}\right]\,.
\end{align*}
Applying (1.6) in \cite{sakhanenko}, we get
\begin{align*}
\mathbb{P}\left[\maxi\left|\sum_{\nu=1}^{\left(i+1\right)\an}\left(\mathbb{L}^{\left(n\right)}_{i,\nu}-Z_{\nu}\right)\right|\geq x_{n}\right]\leq \sum_{\nu=1}^{h^{-1}_{n}}x^{-p}_{n}\mathbb{E}\left[\left|\mathbb{L}^{\left(n\right)}_{i,\nu}\right|^{p}\right]\leq C h^{-1}_{n} x^{-p}_{n},
\end{align*}
where $C>0$ is the positive constant given in (1.6) in \cite{sakhanenko}.
We set
\begin{align*}
x_{n}=\sqrt{\an}\left(\log\left(\hn^{-1}\right)\right)^{-1/2}.
\end{align*}
Since there are more bins than big blocks, the conditions of Theorem 4 in \cite{KMT2} are fulfilled. Furthermore, we can choose $p$ by \eqref{eq:an2} such that
\begin{align*}
\an^{-p/2}h^{-1}_{n}=\KLEINO\left(\left(\log\left(\hn^{-1}\right)\right)^{-p/2}\right)\,.
\end{align*}
So the remainder term fulfills
\begin{align*}
\maxi\left|\sum_{\nu=1}^{\left(i+1\right)\an}\left(\mathbb{L}^{\left(n\right)}_{i,\nu}-Z_{\nu}\right)-\sum_{\nu=1}^{i\an}\left(\mathbb{L}^{\left(n\right)}_{i,\nu}-Z_{\nu}\right)\right|= \KLEINO_{\mathbb{P}}\left(\sqrt{\an}\left(\log\left(\hn^{-1}\right)\right)^{-1/2}\right)\,.
\end{align*}
Let $\mathbb{B}$ be the Brownian Motion in the invariance principle. This implies, that the family $(Z_{i})_{i}$ defined as
\begin{align*}
Z_{i}:=\an^{-1/2}(\mathbb{B}((i+1)\an)-\mathbb{B}(i\an))
\end{align*}
are i.i.d.\ standard normal variables. We set
\begin{align*}
\eta_{i}:=\frac{1}{\sqrt{\an}}\left(\mathbb{B}\left(\left(i+1\right)\an\right)-\mathbb{B}\left(i\an\right)+\KLEINO_{\mathbb{P}}\left(\left(\log\left(n\right)\right)^{-1/2}\right)\right)\,.
\end{align*}
The scaling properties of Brownian motion and the upper bound given for the remainder term give the desired result using Lemma 1 in \cite{wuzhao2007} applied to $(\eta_{i})_i$.
\qed\\
\subsection{Proof of Corollary  \ref{cor:1}}
The proof of Corollary \ref{cor:1} works along the same lines as the one of Theorem \ref{thm:1}. More precisely,
\begin{enumerate}[(a)]
\item in a first step, we have to show that the overlapping versions $\overline{RV}^{ov}_{n,i}$ can be replaced by $\overline{Z}^{ov}_{n,i}$. In a second step, we
\item have to do a shift in the volatility and proceed 
\item by showing that the estimated asymptotic standard deviations can be replaced by their limits and that
\item the difference between oracle and adaptive versions is asymptotically negligible, where the final step is to
\item use a limit theorem for extreme value statistics similar to Lemma 1 in \cite{wuzhao2007}. The appropriate tool for the overlapping versions is given by Lemma 2 in \cite{wuzhao2007}, which can be directly applied choosing $H$ as the rectangular kernel. The latter works, since even if the big blocks may intersect, it is crucial that the bins remain to be disjoint.
\end{enumerate}
Starting with (a) we will argue that the estimates provided in the proof of Theorem \ref{thm:1} are sufficient to conclude the limit for the overlapping statistics. 
We have to show that
\begin{align*}
\mathop{\mathrm{max}}\limits_{i=\an,\ldots,h_n^{-1}-\an}\left|\left|\frac{\overline{RV}^{ov}_{n,i}-\overline{RV}^{ov}_{n,i+1}}{|\overline{RV}^{ov}_{n,i+1}|^{3/4}}\right|-\left|\frac{\overline{Z}^{ov}_{n,i}-\overline{Z}^{ov}_{n,i+1}}{|\overline{Z}^{ov}_{n,i+1}|^{3/4}}\right|\right|=\KLEINO_{\mathbb{P}}(\an^{-1/2}\log(n)^{-1/2})\,.
\end{align*}
The triangle inequality, the decomposition \eqref{eq:1} and a Taylor expansion yield that it is sufficient to prove
\begin{align*}
\mathop{\mathrm{max}}\limits_{i=\an,\ldots,h_n^{-1}-1}\sqrt{\an\log\left(n\right)}\left|\overline{RV}^{ov}_{n,i}-\overline{Z}^{ov}_{n,i}\right|\overset{\mathbb{P}}{\longrightarrow} 0\,.
\end{align*}
The key step proving this is to consider the term corresponding to $A_{n}$ in \eqref{eq:2}. It is basically sufficient to translate the terms $A^{1,1}_{n}$ and $A^{1,2}_{n}$ to the overlapping case. Starting with $A^{1,1}_{n}$ we have to take into account the fact, that in the overlapping case, the index set, $i\in\{\an,\ldots,h_n^{-1}-\an\}$ is a factor $\an$ times larger than the index set for the non-overlapping case. But, since we can adapt the exponent $r$ in the Markov inequality by \eqref{momnoise}, we get a similar upper bound for $A^{1,1}_{n}$. Considering the corresponding part to the term $A^{1,2}_{n}$ we proceed as follows using Assumption \ref{hypo} and \eqref{xibound}:
\begin{align*}
&\frac{\sqrt{\log\left(n\right)}}{\sqrt{\an}}\sum_{\ell=1}^{\an}\sum_{j=1}^{\nh}w_{ij\ell}\int_{\left(\ell +i-1\right)\hn}^{\left(\ell +i-1+\an\right)\hn}\left(\x\left(\tau\right)\right)^{2}\left|\sigma^{2}_{\tau}-\sigma^{2}_{i\hn}\right|\,d\tau\\
&\leq  L_{n}\frac{\sqrt{\log\left(n\right)}}{\sqrt{\an}\hn}\sum_{\ell=1}^{\an}\left(\ell\hn\right)^{\mathfrak{a}}\hn\leq  L_{n}\sqrt{\log\left(n\right)}\sqrt{\an}\left(\an h_{n}\right)^{\mathfrak{a}}\longrightarrow 0\text{, as }n\rightarrow\infty.
\end{align*}
Concerning (b) we have to show that
\begin{align*}
\mathop{\mathrm{max}}\limits_{i=\an,\ldots,h_n^{-1}-\an}\left|\left|\frac{\overline{Z}^{ov}_{n,i}-\overline{Z}^{ov}_{n,i+1}}{|\overline{Z}^{ov}_{n,i+1}|^{3/4}}\right|-\left|\frac{\overline{Z}^{ov}_{n,i}-\widetilde{Z}^{ov}_{n,i+1}}{|\widetilde{Z}^{ov}_{n,i+1}|^{3/4}}\right|\right|=\KLEINO_{\mathbb{P}}(\an^{-1/2}\log(n)^{-1/2})\,.
\end{align*}
Again, after a proper decomposition of the terms and a Taylor expansion, it is sufficient to show that
\begin{align*}
\mathop{\mathrm{max}}\limits_{i=\an,\ldots,h_n^{-1}-\an}|\overline{Z}^{ov}_{n,i}-\widetilde{Z}^{ov}_{n,i}|=\KLEINO_{\mathbb{P}}(\an^{-1/2}\log(n)^{-1/2})\,.
\end{align*}
The discussion of this term works very similar as in the non-overlapping case. Using \ref{hypo} and the central limit theorem, as presented above, we can conclude the desired asymptotic behaviour by adapting the exponent $r$ in the Markov inequality. 
The third and fourth steps (c) and (d) are analogues of Propositions \ref{prop3n} and \ref{prop4n}. Since the upper bound, which is presented in \cite{BibingerReiss}, is not affected for overlapping big blocks, we omit the details. Concerning (e) let us only mention, that an additional tool which is necessary, is Lévy's modulus of continuity theorem 
in order to control the discretization error. Then, the limit \eqref{eq:limitnov} in Corollary \ref{cor:1} is an immediate consequence of Lemma 2 in \cite{wuzhao2007}.
\subsection{Proof of Proposition \ref{prop:withjumps}}
We decompose the process $Y_{t}=C_t+J_{t}+\varepsilon_{t}$ with the continuous semimartingale part
\begin{align*}
C_{t}=X_{0}+\int_{0}^{t}a_{s}\,ds+\int_{0}^{t}\sigma_{s}\,dW_{s}
\end{align*}
and write $\overline{RV}^{ad}_{n,i}(C+\varepsilon)$ for the statistics \eqref{rvad} applied to observations of a process where the jump part $\left(J_{t}\right)_{t\in\left[0,1\right]}$ is eliminated. We begin with some preliminaries for the proof. Throughout this proof, $K$ is a generic constant that may change from line to line. For $N^n(v_n)$ a sequence of counting processes with $N_t^n(v_n)=\int_0^t\int_{\R}\mathbbm{1}_{\{\gamma(x)>v_n\}}\mu(ds,dx)$, with $\gamma(x)$ from Assumption \ref{assjumps}, we have by (13.1.14) from \cite{JP} that
\begin{align}\label{hpr}\P\Big(N^n_{h_n}(v_n)\ge l\Big)\le K h_n^l v_n^{-rl}\end{align}
with $r$ from \eqref{bg}. 
We may restrict to the more difficult result for $\overline{V}_n^{ov,\tau}$ with overlapping statistics. With an analogous decomposition as in \eqref{eq:1}, the proof reduces to 
\begin{align}\label{jumpsneg}\mathop{\mathrm{max}}\limits_{i=\an,\ldots,h_n^{-1}}\big|\overline{RV}_{n,i}^{tr}-\overline{RV}^{ad}_{n,i}(C+\varepsilon)\big|=\KLEINO_{\mathbb{P}}\left(\big(\log(n)\an\big)^{-1/2}\right)\,.\end{align}
We separate bins on that truncations occur from (most) other bins
\begin{align*}&\mathop{\mathrm{max}}\limits_{i=\an,\ldots,h_n^{-1}}\big|\overline{RV}_{n,i}^{tr}-\overline{RV}^{ad}_{n,i}(C+\varepsilon)\big|\\
&=\an^{-1}\mathop{\mathrm{max}}\limits_{i}\Big|\sum_{\ell=i-\an+1}^{i}\Big(\hat{\sigma}^{2,ad}_{\left(\ell-1\right)\hn}\mathbbm{1}_{\{|\hat{\sigma}^{2,ad}_{\left(\ell-1\right)\hn}|\le h_n^{\tau-1}\}}-\hat{\sigma}^{2,ad}_{\left(\ell-1\right)\hn}(C+\varepsilon)\Big)\Big|\\
&\le \an^{-1}\mathop{\mathrm{max}}\limits_{i}\Big|\sum_{\ell=i-\an+1}^{i}\mathbbm{1}_{\{|\hat{\sigma}^{2,ad}_{\left(\ell-1\right)\hn}|> h_n^{\tau-1}\}}\hat{\sigma}^{2,ad}_{\left(\ell-1\right)\hn}(C+\varepsilon)\Big|\\
& + \an^{-1}\mathop{\mathrm{max}}\limits_{i}\Big|\sum_{\ell=i-\an+1}^{i}\hspace*{-.1cm}\mathbbm{1}_{\{|\hat{\sigma}^{2,ad}_{\left(\ell-1\right)\hn}|\le h_n^{\tau-1}\}}\hspace*{-.1cm}\sum_{j=1}^{\nh}\hspace*{-.1cm}\hat w_{j\ell}\Big(S_{j\ell}^2(J)\hspace*{-.05cm}+\hspace*{-.05cm}2S_{j\ell}(J)S_{j\ell}(\varepsilon)\hspace*{-.05cm}+\hspace*{-.05cm}2S_{j\ell}(J)S_{j\ell}(C)\Big)\hspace*{-.05cm}\Big|,
\end{align*}
and consider the two terms separately. For the second maximum the term with $S_{j\ell}^2(J)$ is the most involved one and we prove that
\begin{align}\label{h1}\mathop{\mathrm{max}}\limits_{i}\Big|\sum_{\ell=i-\an+1}^{i}\mathbbm{1}_{\{|\hat{\sigma}^{2,ad}_{\left(\ell-1\right)\hn}|\le h_n^{\tau-1}\}}\sum_{j=1}^{\nh}\hat w_{j\ell}\,S_{j\ell}^2(J)\Big|=\KLEINO_{\mathbb{P}}\left(\sqrt{\frac{\an}{\log(n)}}\right)\,.\end{align}
With some $c,\tilde c\in(0,1)$ the relation 
\[\mathbbm{1}_{\{|\hat{\sigma}^{2,ad}_{\left(\ell-1\right)\hn}|\le h_n^{\tau-1}\}}\le \mathbbm{1}_{\{\sum_{j=1}^{\nh}\hat w_{j\ell}\,S_{j\ell}^2(J)\le c\, h_n^{\tau-1}\}}+\mathbbm{1}_{\{|\hat{\sigma}^{2,ad}_{\left(\ell-1\right)\hn}(C+\varepsilon)|>\tilde c\, h_n^{\tau-1}\}}\]
can be used to decompose the term in two addends. First, we prove that
\begin{align}\label{h1_1}\mathop{\mathrm{max}}\limits_{i}\Big|\sum_{\ell=i-\an+1}^{i}\mathbbm{1}_{\{\sum_{j=1}^{\nh}\hat w_{j\ell}\,S_{j\ell}^2(J)\le c\, h_n^{\tau-1}\}}\sum_{j=1}^{\nh}\hat w_{j\ell}\,S_{j\ell}^2(J)\Big|=\KLEINO_{\mathbb{P}}\left(\sqrt{\frac{\an}{\log(n)}}\right)\,.\end{align}
Using the elementary estimate $|\Phi_j(t)|\le \sqrt{2}h_n^{-1/2}$ and that $\sum_{j\ge 1}\hat w_{j\ell}=1$, we obtain the bound
\begin{align}\label{h1_11}\hspace*{-.1cm}\sum_{j=1}^{\nh}\hspace*{-.05cm}\hat w_{j\ell}\,S_{j\ell}^2(J)=\sum_{j=1}^{\nh}\hspace*{-.05cm}\hat w_{j\ell}\bigg(\sum_{i=1}^n\Delta_i^n J\Phi_{j\ell}\left(\frac{i}{n}\right)\bigg)^2\le 2h_n^{-1}\Big(\sum_{i=\lfloor (\ell-1)nh_n\rfloor }^{\lfloor \ell nh_n\rfloor}|\Delta_i^n J|\Big)^2.\end{align}
We deduce the upper bound
\begin{align*}&\mathop{\mathrm{max}}\limits_{i}\Big|\sum_{\ell=i-\an+1}^{i}\mathbbm{1}_{\{\sum_{j=1}^{\nh}\hat w_{j\ell}\,S_{j\ell}^2(J)\le c\, h_n^{\tau-1}\}}\sum_{j=1}^{\nh}\hat w_{j\ell}\,S_{j\ell}^2(J)\Big|\\
&\le  \mathop{\mathrm{max}}\limits_{i}\Big|\sum_{\ell=i-\an+1}^{i}\Big(\Big(\sum_{j=1}^{\nh}\hat w_{j\ell}\,S_{j\ell}^2(J)\Big)\wedge c\,h_n^{\tau-1}\Big)\Big|\\
&\le \mathop{\mathrm{max}}\limits_{i}\Big|\sum_{\ell=i-\an+1}^{i}\Big(\Big(2h_n^{-1}\Big(\sum_{i=\lfloor (\ell-1)nh_n\rfloor }^{\lfloor \ell nh_n\rfloor}|\Delta_i^n J|\Big)^2\Big)\wedge c\,h_n^{\tau-1}\Big)\Big|
\,.\end{align*}
We decompose this term as follows
\begin{align*}&\mathop{\mathrm{max}}\limits_{i}\Big|\sum_{\ell=i-\an+1}^{i}\Big(\Big(2h_n^{-1}\Big(\sum_{i=\lfloor (\ell-1)nh_n\rfloor }^{\lfloor \ell nh_n\rfloor}|\Delta_i^n J|\Big)^2\Big)\wedge c\,h_n^{\tau-1}\Big)\Big|\\
&\le \mathop{\mathrm{max}}\limits_{i}\Big|\sum_{\ell=i-\an+1}^{i}2h_n^{-1}\Big(\sum_{i=\lfloor (\ell-1)nh_n\rfloor }^{\lfloor \ell nh_n\rfloor}|\Delta_i^n J|\Big)^2\1_{\big\{\sum_{i=\lfloor (\ell-1)nh_n\rfloor }^{\lfloor \ell nh_n\rfloor}|\Delta_i^n J|\le \sqrt{c/2}h_n^{\tau/2}\big\}}\Big|\\
&+\mathop{\mathrm{max}}\limits_{i}\Big|\sum_{\ell=i-\an+1}^{i}ch_n^{\tau-1}\1_{\big\{\sum_{i=\lfloor (\ell-1)nh_n\rfloor }^{\lfloor \ell nh_n\rfloor}|\Delta_i^n J|> \sqrt{c/2}h_n^{\tau/2}\big\}}\Big|\\
&\le \mathop{\mathrm{max}}\limits_{i}\Big|\sum_{\ell=i-\an+1}^{i}2h_n^{-1}\big|J_{\ell h_n}-J_{(\ell-1)h_n}\big|^2\,\1_{\{|J_{\ell h_n}-J_{(\ell-1)h_n}|\le \sqrt{c/2}h_n^{\tau/2}\}}\Big|\\
& +\mathop{\mathrm{max}}\limits_{i}\Big|\sum_{\ell=i-\an+1}^{i}2h_n^{-1}\Big(\big|J_{\ell h_n}-J_{(\ell-1)h_n}\big|^2-\Big(\sum_{i=\lfloor (\ell-1)nh_n\rfloor }^{\lfloor \ell nh_n\rfloor}|\Delta_i^n J|\Big)^2\Big)\1_{\big\{\sum_{i=\lfloor (\ell-1)nh_n\rfloor }^{\lfloor \ell nh_n\rfloor}|\Delta_i^n J|\le \sqrt{c/2}h_n^{\tau/2}\big\}}\Big|\\
&+\mathop{\mathrm{max}}\limits_{i}\Big|\sum_{\ell=i-\an+1}^{i}ch_n^{\tau-1}\1_{\big\{\sum_{i=\lfloor (\ell-1)nh_n\rfloor }^{\lfloor \ell nh_n\rfloor}|\Delta_i^n J|> \sqrt{c/2}h_n^{\tau/2}\big\}}\Big|\\
&=\Gamma_1+\Gamma_2+\Gamma_3\,,
\end{align*}
where we use the triangle inequality, that $\1_{\{B\le C\}}\le \1_{\{A\le C\}}$ if $A\le B$, and elementary inequalities for the maximum. We begin with $\Gamma_2$. It holds for $\varpi>0$ arbitrarily small that
\begin{align*}\Gamma_2&\le 4\mathop{\mathrm{max}}\limits_{i}\Big|\sum_{\ell=i-\an+1}^{i}h_n^{-1}\Big(\sum_{i=\lfloor (\ell-1)nh_n\rfloor }^{\lfloor \ell nh_n\rfloor}|\Delta_i^n J|\Big)^2\1_{\big\{\sum_{i=\lfloor (\ell-1)nh_n\rfloor }^{\lfloor \ell nh_n\rfloor}|\Delta_i^n J|\le h_n^{2/3+\varpi}\big\}}\Big|\\
&+2\mathop{\mathrm{max}}\limits_{i}\Big|\sum_{\ell=i-\an+1}^{i}h_n^{-1}\Big(\big|J_{\ell h_n}-J_{(\ell-1)h_n}\big|^2-\Big(\sum_{i=\lfloor (\ell-1)nh_n\rfloor }^{\lfloor \ell nh_n\rfloor}|\Delta_i^n J|\Big)^2\Big)\\
&\quad\times \1_{\{N^n_{\ell h_n}(h_n^{2/3+\varpi})-N^n_{(\ell-1)h_n}(h_n^{2/3+\varpi})\ge 2\}}\,\1_{\big\{h_n^{2/3+\varpi}\le \sum_{i=\lfloor (\ell-1)nh_n\rfloor }^{\lfloor \ell nh_n\rfloor}|\Delta_i^n J|\le \sqrt{c/2}h_n^{\tau/2}\big\}}\Big|\,,\end{align*}
with $N^n_t(v_n)$ from \eqref{hpr}. The additional indicator function in the last addend may be added, since $|J_{\ell h_n}-J_{(\ell-1)h_n}\big|=\sum_{i=\lfloor (\ell-1)nh_n\rfloor }^{\lfloor \ell nh_n\rfloor}|\Delta_i^n J|$ when there is at most one jump on the bin. By \eqref{hpr} with $v_n=h_n^{2/3+\varpi}$ and $l=2$, we obtain for the Poisson process $N^n_t(h_n^{2/3+\varpi})$:
\[\P\Big(N^n_{\ell h_n}(h_n^{2/3+\varpi})-N^n_{(\ell-1)h_n}(h_n^{2/3+\varpi})\ge 2\Big)\le h_n^2h_n^{-2r(2/3+\varpi)}\,,\] 
for all $\ell$, and we infer that
\begin{align*}
\Gamma_2&=\mathcal{O}\big(\an h_n^{-1}h_n^{4/3+2\varpi}\big)+\mathcal{O}_{\P}\big(\an\log(\an)h_n^{1+\tau}h_n^{-2r(2/3+\varpi)}\big)\\
&=\KLEINO\big(\an^{1/2}(\log(n))^{-1/2}\big)+\KLEINO_{\P}\big(\an^{1/2}(\log(n))^{-1/2}\big)\,.
\end{align*}
We used that $\an\le h_n^{2/3}$ by \eqref{eq:an2}, since $\aalpha\le 1$, for the first term and that by Condition \eqref{resjumps}:
\begin{align}\label{gamma2}\an^{1/2}\log(\an)\sqrt{\log(n)}h_n^{1+\tau}h_n^{-2r(2/3+\varpi)}\to 0\,.\end{align}
Define the sequence of random variables
\begin{align*}\mathcal{Z}_{\ell}=\Big(\big(J_{\ell h_n}-J_{(\ell-1)h_n}\big)\1_{\{|J_{\ell h_n}-J_{(\ell-1)h_n}|\le \sqrt{c/2}h_n^{\tau/2}\}}\Big)^2, \quad \ell=1,\ldots,h_n^{-1}\,.\end{align*}
We have that
\[\Gamma_1=\max_{i=\an,\ldots,h_n^{-1}}\sum_{\ell=i-\an+1}^{i}2h_n^{-1}\mathcal{Z}_{\ell}\,.\]
From equation (54) of \cite{aitjac10}, we obtain the bounds 
\begin{subequations}
\begin{align}\label{evarbound}\E\Big[\Big(\big|J_{\ell h_n}-J_{(\ell-1)h_n}\big| \wedge\sqrt{c/2}\,h_n^{\tau/2}\Big)^2\Big]& \le K h_n h_n^{\tau(1-r/2)}\,,\\
\label{hvarbound}\var\Big(\Big(\big|J_{\ell h_n}-J_{(\ell-1)h_n}\big| \wedge\sqrt{c/2}\,h_n^{\tau/2}\Big)^2\Big)&\le  K h_n h_n^{2\tau-r\tau/2}\,.\end{align}
\end{subequations}
Observe that
\begin{align*}\Gamma_1= \max_{i=\an,\ldots,h_n^{-1}} 2h_n^{-1}\Big(\sum_{\ell=1}^i \mathcal{Z}_{\ell}-\sum_{\ell=1}^{i-\an}\mathcal{Z}_{\ell}\Big)\le 2h_n^{-1}\max_{i=1,\ldots,h_n^{-1}} \sum_{\ell=1}^i \mathcal{Z}_{\ell}\,.\end{align*}
Since $\int_0^t\int_{\R}\delta(s,x)\1_{\{\gamma(x)\le \sqrt{c/2}\,h_n^{\tau/2}\}}(\mu-\nu)(dx,dx)$ is a martingale, \(\big(\sum_{\ell=1}^{i}2h_n^{-1}\mathcal{Z}_{\ell}\big)_{1\le i\le h_n^{-1}}\) is a submartingale as the martingale increments are uncorrelated and a squared martingale is always a submartingale. We apply Doob's submartingale maximal inequality which yields
\begin{align}\lambda\,\P\Big(\max_{i=\an,\ldots,h_n^{-1}} \sum_{\ell=1}^i 2h_n^{-1}\mathcal{Z}_{\ell}\ge \lambda \Big)\le 2h_n^{-1}\E\Big[\sum_{\ell=1}^{h_n^{-1}}\mathcal{Z}_{\ell}\Big]\propto h_n^{\tau(1-r/2)}\,,
\end{align}
such that \(\P\Big(\max_{i=\an,\ldots,h_n^{-1}} \sum_{\ell=1}^i 2h_n^{-1}\mathcal{Z}_{\ell}\ge \lambda \Big)\to 0\) for $\lambda^{-1}=\KLEINO(h_n^{\tau(r/2-1)})$.
Thus, $\Gamma_1$ is negligible as long as 
\begin{align}\label{gamma1}\an^{1/2}\sqrt{\log(n)}h_n^{\tau(1-r/2)}\rightarrow 0\,.\end{align}
From Condition \eqref{resjumps} we have that $r<2-\tau^{-1}\beta$ and it follows that
$\beta<\tau(2-r)$, what ensures the above relation. Under this condition, $\Gamma_3$ becomes negligible as well, since with \eqref{hpr} for $l=1$ and $v_n=h_n^{\tau/2}$, we obtain that
\[\Gamma_3=\mathcal{O}_{\P}\Big(\an\log(\an)h_n^{\tau(1-r/2)}\Big)=\KLEINO_{\mathbb{P}}\left(\sqrt{\frac{\an}{\log(n)}}\right)\,.\]
We have proved \eqref{h1_1}. Finally, we show that
\begin{align}\label{h2}\mathop{\mathrm{max}}\limits_{i}\Big|\sum_{\ell=i-\an+1}^{i}\mathbbm{1}_{\{|\hat{\sigma}^{2,ad}_{\left(\ell-1\right)\hn}|> h_n^{\tau-1}\}}\hat{\sigma}^{2,ad}_{\left(\ell-1\right)\hn}(C+\varepsilon)\Big|=\KLEINO_{\mathbb{P}}\left(\sqrt{\frac{\an}{\log(n)}}\right)\,,\end{align}
and discuss the similar remaining second term for \eqref{h1}. With some $c,\tilde c\in(0,1)$, we use the relation 
\[\mathbbm{1}_{\{|\hat{\sigma}^{2,ad}_{\left(\ell-1\right)\hn}|> h_n^{\tau-1}\}}\le \mathbbm{1}_{\{\sum_{j=1}^{\nh}\hat w_{j\ell}\,S_{j\ell}^2(J)> c\, h_n^{\tau-1}\}}+\mathbbm{1}_{\{|\hat{\sigma}^{2,ad}_{\left(\ell-1\right)\hn}(C+\varepsilon)|>\tilde c\, h_n^{\tau-1}\}}\,.\]
With Markov's inequality, we obtain that
\begin{align}\label{h2_m}\P\Big(\mathop{\mathrm{max}}\limits_{k=1,\ldots,h_n^{-1}}\big|\hat{\sigma}^{2,ad}_{\left(k-1\right)\hn}(C+\varepsilon)\big|>\lambda v_n\Big)&\le h_n^{-1}\P\Big(\big|\hat{\sigma}^{2,ad}_{\hn}(C+\varepsilon)\big|>\lambda v_n\Big)\\
&\le \notag h_n^{-1}K\frac{\E\Big[\big|\hat{\sigma}^{2,ad}_{\hn}(C+\varepsilon)\big|^p\Big]}{(\lambda v_n)^p}=\mathcal{O}\big(h_n^{-1}\log(n)v_n^{-p}\big)\,,
\end{align}
using moment bounds from Lemma 2 of \cite{BibingerWinkel2018} under condition \eqref{momnoise}. We derive that
\begin{align}\mathop{\mathrm{max}}\limits_{k=1,\ldots,h_n^{-1}}\big|\hat{\sigma}^{2,ad}_{\left(k-1\right)\hn}(C+\varepsilon)\big|=\mathcal{O}_{\P}\big(h_n^{-\varpi}\big)\end{align}
for arbitrary $\varpi>0$. In particular, for $\varpi$ from \eqref{eq:an2}
\[\mathop{\mathrm{max}}\limits_{k=1,\ldots,h_n^{-1}}\big|\hat{\sigma}^{2,ad}_{\left(k-1\right)\hn}(C+\varepsilon)\big|=\mathcal{O}_{\P}\big(h_n^{-\varpi/2}\big)=\KLEINO_{\mathbb{P}}\left(\sqrt{\frac{\an}{\log(n)}}\right)\,.\]
With \eqref{h1_11}, we obtain the estimate 
\begin{align*}
&\mathop{\mathrm{max}}\limits_{i}\Big|\sum_{\ell=i-\an+1}^{i}\mathbbm{1}_{\{\sum_{j=1}^{\nh}\hat w_{j\ell}\,S_{j\ell}^2(J)> c\, h_n^{\tau-1}\}}\hat{\sigma}^{2,ad}_{\left(\ell-1\right)\hn}(C+\varepsilon)\Big|\\ &\le \mathop{\mathrm{max}}\limits_{i}\Big|\sum_{\ell=i-\an+1}^{i}\1_{\big\{\sum_{i=\lfloor (\ell-1)nh_n\rfloor }^{\lfloor \ell nh_n\rfloor}|\Delta_i^n J|> \sqrt{c/2}h_n^{\tau/2}\big\}}
\hat{\sigma}^{2,ad}_{\left(\ell-1\right)\hn}(C+\varepsilon)\Big|\,.
\end{align*}
Since for $r\tau<1$, we have by \eqref{hpr} that
\[\P\Big(\bigcup_{\ell=1}^{h_n^{-1}}\big\{\big(N^n_{\ell h_n}\big(\sqrt{c/2} h_n^{\tau/2}\big)-N^n_{(\ell-1)h_n}\big(\sqrt{c/2} h_n^{\tau/2}\big)\big)\ge 2\big\}\Big)\le K h_n^{-1}h_n^2h_n^{-\tau r}\rightarrow 0\,,\]
we may consider instead
\[ \mathop{\mathrm{max}}\limits_{i}\Big|\sum_{\ell=i-\an+1}^{i}\mathbbm{1}_{\{|J_{\ell h_n}-J_{(\ell-1)h_n}|> \sqrt{c/2}\, h_n^{\tau/2}\}}\hat{\sigma}^{2,ad}_{\left(\ell-1\right)\hn}(C+\varepsilon)\Big|\,.\]
It is thus sufficient to show that
\[\P\Big(\bigcup_{k=\an}^{h_n^{-1}}\big\{\big(N^n_{(k-1)h_n}\big(\sqrt{c/2} h_n^{\tau/2}\big)-N^n_{(k-\an)h_n}\big(\sqrt{c/2} h_n^{\tau/2}\big)\big)\ge l\big\}\Big)\le K h_n^{-1+l}\an^lh_n^{-\tau rl/2}\rightarrow 0\,,\]
for some $l<\infty$ where we have applied an inequality analogous to \eqref{hpr}. This holds true, since 
\[h_n^{-1+l}\an^lh_n^{-\tau rl/2}\le K\,h_n^{-1}\big(h_n^{1-\beta-r\tau/2}\big)^l\]
and $r\tau<2(1-\beta)$ by Condition \eqref{resjumps}. Using \eqref{h1_11}, \eqref{h2_m} and that the squared jumps are summable, we obtain that for $\tau<1-(3-2\varpi)/p$ with $p\in\N$
\begin{align}&\notag\mathop{\mathrm{max}}\limits_{i}\Big|\sum_{\ell=i-\an+1}^{i}\mathbbm{1}_{\{|\hat{\sigma}^{2,ad}_{\left(\ell-1\right)\hn}(C+\varepsilon)|>\tilde c\, h_n^{\tau-1}\}}\sum_{j=1}^{\nh}\hat w_{j\ell}\,S_{j\ell}^2(J)\Big|\le h_n^{-1}\mathbbm{1}_{\{\mathop{\mathrm{max}}\limits_{\ell} |\hat{\sigma}^{2,ad}_{\left(\ell-1\right)\hn}(C+\varepsilon)|>\tilde c\, h_n^{\tau-1}\}}\\
&\label{h1_2}=\mathcal{O}_{\P}\big(h_n^{-3/2+p/2(1-\tau)}\big)=\KLEINO_{\mathbb{P}}\left(\sqrt{\frac{\an}{\log(n)}}\right)\,.\end{align}
This is sufficient for
\begin{align}\label{h2_2}\mathop{\mathrm{max}}\limits_{i}\Big|\sum_{\ell=i-\an+1}^{i}\mathbbm{1}_{\{|\hat{\sigma}^{2,ad}_{\left(\ell-1\right)\hn}(C+\varepsilon)|>\tilde c\, h_n^{\tau-1}\}}\hat{\sigma}^{2,ad}_{\left(\ell-1\right)\hn}(C+\varepsilon)\Big|=\KLEINO_{\mathbb{P}}\left(\sqrt{\frac{\an}{\log(n)}}\right)\,.\end{align}
Equations \eqref{h1_1} and \eqref{h1_2} imply Equation \eqref{h1}. Maxima of the terms with cross terms $S_{j\ell}(\varepsilon)S_{j\ell}(J)$ and $S_{j\ell}(\varepsilon)S_{j\ell}(C)$ can be handled similarly (or with Cauchy-Schwarz) and are of smaller order. Equations \eqref{h1} and \eqref{h2} imply Equation \eqref{jumpsneg} what finishes the proof of Proposition \ref{prop:withjumps}. \qed\\
\subsection{Proof of Theorem \ref{thm:2}}
We have to show that \eqref{rejectrule}, \eqref{rejectrule_ov}, \eqref{rejectrule_jump} and \eqref{rejectrule_jump_ov} yield asymptotically tests with power 1. Concerning \eqref{rejectrule}, that is, under \ref{alt},
\begin{align}\label{testpower1}
\mathbb{P}\left[\overline{V}_{n}\geq  \an^{-1/2}\left(\left(\log(m_{n})\right)^{-1/2} c_{\alpha}+\gamma_{m_{n}}\right)\right]\longrightarrow 1\quad\text{as }n\rightarrow +\infty.
\end{align}
We set
\begin{align}\label{eq:Vnhat}
\hat{V}_{n,i}=\left|\frac{\zeta_{i}^{n}-\zeta_{i+1}^{n}}{\sqrt{8\hat{\eta}}\big|\overline{RV}^{ad}_{n,i+1}\big|^{3/4}}\right|,\quad i=0,\ldots,{\lfloor \left(\ah\right)^{-1}\rfloor}-2
\end{align}
with
\begin{align*}
\zeta_{i}^{n}:=\alpha_n^{-1}\sum_{\ell=1}^{\an}\left(\hat{\sigma}^{2,ad}_{\ahlla}-\mathbb{E}\left[\hat{\sigma}^{2,ad}_{\ahlla}\right]\right).
\end{align*}
For $\theta-\lfloor(\alpha_n h_n)^{-1}\theta \rfloor\alpha_nh_n>\alpha_nh_n/2$, set $i^*=\lfloor(\alpha_n h_n)^{-1}\theta \rfloor$. For $\theta-\lfloor(\alpha_n h_n)^{-1}\theta \rfloor\alpha_nh_n\le \alpha_nh_n/2$, set $i^*=\lfloor(\alpha_n h_n)^{-1}\theta \rfloor-1$. Since $\theta\in(0,1)$, $i^*\ge 0$ for $n$ sufficiently large.
By the reverse triangle inequality, we get:
\begin{align*}
\overline{V}_{n}\geq -\hat{V}_{n,i^*}+\left|\frac{\sum_{\ell=1}^{\an}\mathbb{E}\left[\hat{\sigma}^{2,ad}_{i^*\ah+\left(\ell-1\right)h_{n}}\right]-\sum_{\ell=1}^{\an}\mathbb{E}\left[\hat{\sigma}^{2,ad}_{\left(i^*+1\right)\ah+\left(\ell-1\right)h_{n}}\right]}{\an\sqrt{8\hat{\eta}}\big|\overline{RV}^{ad}_{n,i+1}\big|^{3/4}}\right|\,.
\end{align*}
First of all, we can conclude by Theorem \ref{thm:1} that for all $i$:
\begin{align*}
\hat{V}_{n,i}=\mathcal{O}_{\mathbb{P}}\left(\an^{-1/2}\right).
\end{align*}
Then we take into account that the sum over $j$ is convex and $\hat{\sigma}^{2,ad}_{\ahlla}$ is already bias corrected with respect to the noise part. Furthermore, bounding the volatility from below, using the It\^{o} isometry and
\begin{align*}
\sum_{\ell=1}^{\an}\sum_{j=1}^{nh_{n}-1}w_{ij\ell}\int_{\ahll}^{\ahl}\left(\x\left(s\right)\right)^{2}\sigma^{2}_{s}\,ds\propto\frac{1}{\hn}\int_{i\ah}^{\left(i+1\right)\ah}\sigma^{2}_{s}\,ds\,,
\end{align*}
we obtain that with a constant $c>0$:
\begin{align*}
\overline{V}_{n}\geq -\mathcal{O}_{\mathbb{P}}\left(\frac{1}{\sqrt{\an}}\right)+c\left|\varsigma\left(i^*,n\right)-\varsigma\left(i^*+1,n\right)\right|\left(1-\KLEINO_{\mathbb{P}}\left(1\right)\right),
\end{align*}
with
\begin{align*}
\varsigma\left(i,n\right):=\frac{1}{\ah}\int_{i\ah}^{\left(i+1\right)\ah}\sigma_{s}^{2}\,ds\,.
\end{align*}
Note that the denominator in \eqref{eq:Vnhat} can be `absorbed' by the constant $c$. We give a lower bound on $\left|\varsigma\left(i^*,n\right)-\varsigma\left(i^*+1,n\right)\right|$. 
Under the alternative hypothesis \ref{alt}, we have for the continuous volatility part that
\begin{align*}\left|\int_{i^* \ah}^{\left(i^* +1\right)\ah}\big(\sigma^{2,(c)}_{s}-\sigma^{2,(c)}_{s+\ah}\big)\,ds\right|\le \ah \KK_n\left(\ah\right)^{\mathfrak{a}}\,.\end{align*} 
The jump component of the volatility is most difficult to handle for $r=2$. If it satisfies \eqref{bg} with some $r\ge 1$, we derive for some constant $K_p$ dependent on $p$ the bound
\begin{align}\notag\forall s,t\ge 0:~\E\left[\big|\sigma^{2,(j)}_{t}-\sigma^{2,(j)}_s\big|^p\big|\mathcal{F}_s\right]&\le K_p\,\E\Big[\Big(\int_s^{t}\int_{\R}(\gamma^r(x)\wedge 1)\lambda(dx)ds\Big)^{p/r}\Big]\\ &  \le K_p |t-s|^{((p/r)\wedge 1)}\,.\end{align}
With $r=2$ and for $p=1$, we thus obtain for $i^*=\lfloor(\alpha_n h_n)^{-1}\theta \rfloor$ that
\begin{align*}\left|\int_{i^* \ah}^{\left(i^* +1\right)\ah}\big(\sigma^{2,(j)}_{s}-\sigma^{2,(j)}_{s+\ah}-\Delta\sigma_{\theta}^2\1_{[0,\theta)}(s)\big)ds\right|&=\mathcal{O}_{\P}\Big(\int_{i^* \ah}^{\left(i^* +1\right)\ah}|\ah|^{1/2}\,ds\Big) \\
&=\mathcal{O}_{\P}((\ah)^{3/2})\,,\end{align*}
and an analogous bound for $i^*=\lfloor(\alpha_n h_n)^{-1}\theta \rfloor-1$. Thus, we obtain that
\begin{align*}
&\left|\varsigma\left(i^*,n\right)-\varsigma\left(i^*+1,n\right)\right|=(\ah)^{-1}\left|\int_{i^* \ah}^{\left(i^* +1\right)\ah}\big(\sigma^2_{s}-\sigma^2_{s+\ah}\big)\,ds\right|\\
&\ge (\ah)^{-1}\left(\left|\int_{i^* \ah}^{\left(i^* +1\right)\ah}\big(\sigma^{2,(j)}_{s}-\sigma^{2,(j)}_{s+\ah}\big)\,ds\right|-\left|\int_{i^* \ah}^{\left(i^* +1\right)\ah}\big(\sigma^{2,(c)}_{s}-\sigma^{2,(c)}_{s+\ah}\big)\,ds\right|\right)\\
&\ge (\ah)^{-1}\min\bigg(\Big|\int_{i^* \ah}^{\theta}\Delta\sigma^{2}_{\theta}\,ds\Big|,\Big|\int_{\theta}^{(i^*+2) \ah}\Delta\sigma^{2}_{\theta}\,ds\Big|\bigg)-\mathcal{O}_{\P}((\ah)^{1/2})-L_{n}\left(\ah\right)^{\mathfrak{a}}\\
&\geq \frac{1}{2}\Delta\sigma^{2}_{\theta}-\mathcal{O}_{\P}((\ah)^{1/2})-L_{n}\left(\ah\right)^{\mathfrak{a}}\,,
\end{align*}
where we have applied the reverse triangle inequality. This implies \eqref{testpower1}. In the non-overlapping case, two neighboring differences $\left|\varsigma\left(i,n\right)-\varsigma\left(i+1,n\right)\right|$ incorporate the volatility jump. Our above definition of $i^*$ ensures that we consider the most affected one for the lower bound. A corresponding lower bound for $\overline{V}^{ov}_{n}$ in the overlapping case becomes simpler as we always include statistics over two neighboring blocks, such that $\theta$ is close to the end-point between the two blocks. Proving that
\begin{align*}
\mathbb{P}\left[\overline{V}_{n}\leq  {\an}^{-1/2}\left(\left(\log(m_{n})\right)^{-1/2} c_{\alpha}+\gamma_{m_{n}}\right)\right]\longrightarrow 1-\alpha\quad\text{as }n\rightarrow +\infty
\end{align*}
under \ref{hypo}, is an immediate consequence of Theorem \ref{thm:1}. This completes the proof for \eqref{rejectrule}. We omit further details concerning \eqref{rejectrule_ov}, \eqref{rejectrule_jump} and \eqref{rejectrule_jump_ov}, since the estimates we have presented above can be readily adapted. \hfill\qed

\subsection{Proof of Proposition \ref{propCP}}
We adopt the following elementary lemma, related to Lemma B.1 in \cite{aue} and Lemma D.1. in \cite{Bibinger2017}. 
\begin{lem}\label{lem_argmax_bound}
Let $f(t)$ and $g(t)$ be functions on $[0,\theta]$ such that $f(t)$ is non-negative and increasing. As long as $f(\theta) - f(\theta - \gamma) > 2\sup_{0 \leq t \leq \theta}|g(t)|$ for some $\gamma \in [0,\theta]$, we have that
\begin{align*}
\operatorname{argmax}_{0 \leq t \leq \theta}\bigl(f(t)+g(t)\bigr) \geq \theta- \gamma.
\end{align*}
An analogous result holds if $f(t)$ and $g(t)$ are functions on $[\theta,1]$ and $f(t)$ is decreasing.
\end{lem}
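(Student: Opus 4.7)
The plan is a short deterministic pointwise comparison of the values of $f+g$ at the right end-point $t=\theta$ with those on the left piece $[0,\theta-\gamma]$, arranged so that the assumed signal drop $f(\theta)-f(\theta-\gamma)$ strictly exceeds twice the oscillation of $g$. I will write $M:=\sup_{0\leq s\leq \theta}|g(s)|$, and the conclusion will follow by ruling out that any maximizer of $f+g$ on $[0,\theta]$ lies in $[0,\theta-\gamma)$.

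First, I will combine the monotonicity $f(t)\leq f(\theta-\gamma)$ for $t\in[0,\theta-\gamma]$ with the trivial bound $g(t)\leq M$ to obtain
\begin{align*}
\sup_{t\in[0,\theta-\gamma]}\bigl(f(t)+g(t)\bigr)\leq f(\theta-\gamma)+M.
\end{align*}
Second, evaluating at $t=\theta$ and using $g(\theta)\geq -M$ gives $f(\theta)+g(\theta)\geq f(\theta)-M$. Third, the hypothesis $f(\theta)-f(\theta-\gamma)>2M$ is equivalent to $f(\theta)-M>f(\theta-\gamma)+M$, which chains the two previous displays into the strict inequality
\begin{align*}
\sup_{t\in[0,\theta-\gamma]}\bigl(f(t)+g(t)\bigr)\leq f(\theta-\gamma)+M<f(\theta)-M\leq f(\theta)+g(\theta).
\end{align*}
Consequently no point of $[0,\theta-\gamma)$ can attain the supremum of $f+g$ on $[0,\theta]$, and therefore $\operatorname{argmax}_{0\leq t\leq \theta}(f+g)\geq \theta-\gamma$, as claimed.

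For the analogous statement on $[\theta,1]$ with $f$ nonincreasing the same reasoning carries over after a reversal of orientation: one compares $f(\theta)+g(\theta)$ with $\sup_{t\in[\theta+\gamma,1]}(f+g)$ and invokes the hypothesis $f(\theta)-f(\theta+\gamma)>2\sup_{[\theta,1]}|g|$ to rule out maximizers in $(\theta+\gamma,1]$. The statement is essentially elementary, so I do not anticipate a substantial obstacle. The one delicate point worth flagging is that the strictness of the hypothesis is precisely what converts the comparison into an assertion about the \emph{argmax} (rather than merely about the value of $f+g$), and, because the argument is entirely pointwise, no continuity or measurability assumption on $g$ is needed.
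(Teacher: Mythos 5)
Your proof is correct and takes essentially the same route as the paper: bound $\sup_{[0,\theta-\gamma]}(f+g)$ above by $f(\theta-\gamma)+\sup|g|$ using the monotonicity of $f$, bound $f(\theta)+g(\theta)$ below by $f(\theta)-\sup|g|$, and let the hypothesis $f(\theta)-f(\theta-\gamma)>2\sup|g|$ chain the two into a strict inequality that rules out a maximizer to the left of $\theta-\gamma$. The only difference is cosmetic presentation of the inequality chain.
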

\begin{proof}
Since
\begin{align*}\sup_{0\le t<\theta-\gamma}|g(t)|-g(\theta)\le 2\sup_{0 \leq t \leq \theta}|g(t)|<f(\theta) - f(\theta - \gamma)\,,
\end{align*}
we derive that
\begin{align*}\max_{0 \leq t < \theta-\gamma}\big(f(t)+g(t)\big)&\le \sup_{0 \leq t < \theta-\gamma}\big(f(t)\big)+\sup_{0 \leq t <\theta-\gamma}\big|g(t)\big|\\
&\le f(\theta-\gamma)+\sup_{0 \leq t <\theta-\gamma}\big|g(t)\big|<f(\theta)+g(\theta)\,,
\end{align*}
such that \(\operatorname{argmax}_{0 \leq t \leq \theta}\bigl(f(t)+g(t)\bigr) \geq \theta - \gamma\).
\end{proof}
Let $\theta\in\left(0,1\right)$ be the change point, that is, the jump time of the volatility. Without loss of generality $\delta=\Delta\sigma^2_{\theta}>0$. Define $(i^{*}-1)=\lceil\theta\hn^{-1}\rceil$, the smallest integer such that $(i^{*}-1)\hn\geq\theta$ holds. We use the following decomposition of $\overline{V}^{\diamond}_{n,i}$ for $i=\an,\ldots,\hn^{-1}-\an$:
\begin{align*}
\overline{V}^{\diamond}_{n,i}=\an^{-1/2}\left|A_{n,i}+B_{n,i}+C_{n,i}+D_{n,i}\right|\,,
\end{align*}
where
\begin{align*}
A_{n,i}&=\sum_{\ell=i-\an+1}^{i}\big(\hat{\sigma}^{2,ad}_{(\ell-1)\hn}-\mathbb{E}[\hat{\sigma}^{2,ad}_{(\ell-1)\hn}]\big)-\sum_{\ell=i+1}^{i+\an}\big(\hat{\sigma}^{2,ad}_{(\ell-1)\hn}-\mathbb{E}[\hat{\sigma}^{2,ad}_{(\ell-1)\hn}]\big)\,,\\
B_{n,i}&=\sum_{\ell=i-\an+1}^{i}\big(\mathbb{E}[\hat{\sigma}^{2,ad}_{(\ell-1)\hn}]-\sigma^{2}_{(\ell-1)\hn}\big)-\sum_{\ell=i+1}^{i+\an}\big(\mathbb{E}[\hat{\sigma}^{2,ad}_{(\ell-1)\hn}]-\sigma^{2}_{(\ell-1)\hn}\big)\,,\\
C_{n,i}&=\sum_{\ell=i-\an+1}^{i}\widetilde{\sigma}^{2}_{(\ell-1)\hn}-\sum_{\ell=i+1}^{i+\an}\widetilde{\sigma}^{2}_{(\ell-1)\hn}\,,\\
D_{n,i}&=\sum_{\ell=i-\an+1}^{i}\delta\mathbbm{1}_{\left\{\ell\geq i^{*}\right\}}-\sum_{\ell=i+1}^{i+\an}\delta\mathbbm{1}_{\left\{\ell\geq i^{*}\right\}}\,,
\end{align*}
with $\left(\widetilde{\sigma}^{2}_{t}\right)_{t\in\left[0,1\right]}$ the path of the volatility from that the jump is eliminated:
\begin{align*}
\sigma^{2}_{(\ell-1)\hn}=\widetilde{\sigma}^2_{(\ell-1)\hn}+\delta\mathbbm{1}_{\left\{\ell\geq i^{*}\right\}}\,.
\end{align*}
By definition, $\left(\widetilde{\sigma}^{2}_{t}\right)_{t\in\left[0,1\right]}$ then fulfills the regularity properties on \ref{hypo}. This implies that 
\begin{align*}
\left|C_{n,i}\right|&=\left|\sum_{\ell=i-\an+1}^{i}\big(\widetilde{\sigma}^{2}_{(\ell-1)\hn}-\widetilde{\sigma}^{2}_{(i-1)\hn}\big)-\sum_{\ell=i+1}^{i+\an}\big(\widetilde{\sigma}^{2}_{(\ell-1)\hn}-\widetilde{\sigma}^{2}_{(i-1)\hn}\big)\right|\\
&\leq 2\max\left(\left|\sum_{\ell=i-\an+1}^{i}\big(\widetilde{\sigma}^{2}_{(\ell-1)\hn}-\widetilde{\sigma}^{2}_{(i-1)\hn}\big)\right|,\left|\sum_{\ell=i+1}^{i+\an}\big(\widetilde{\sigma}^{2}_{(\ell-1)\hn}-\widetilde{\sigma}^{2}_{(i-1)\hn}\big)\right|\right).
\end{align*}
Under \ref{hypo}, we obtain uniformly in $i$ that almost surely
\begin{align*}
&\left|\sum_{\ell=i-\an+1}^{i}\big(\widetilde{\sigma}^{2}_{(\ell-1)\hn}-\widetilde{\sigma}^{2}_{(i-1)\hn}\big)\right|\leq \sum_{\ell=i-\an+1}^{i}\left|(\ell-1)\hn-(i-1)\hn\right|^{\mathfrak{a}}\leq \an(\ah)^{\mathfrak{a}}\,.
\end{align*}
This is sufficient for
\begin{align*}
\maxi\left|C_{n,i}\right|=\mathcal{O}_{\mathbb{P}}\big(\sqrt{\an\log((\ah)^{-1})}\big)\,.
\end{align*}
From the proof of Theorem \ref{thm:1}, we can thus conclude the following bound:
\begin{align*}
\maxi\Big(\left|A_{n,i}\right|+\left|B_{n,i}\right| \Big)=\mathcal{O}_{\mathbb{P}}\big(\sqrt{\an\log((\ah)^{-1})}\big)\,.
\end{align*}
Next, we consider a step-wise defined function $\left(g(t)\right)_{t\in\left[0,1\right]}$ given by
\begin{align*}
g(i\hn)=\an^{-1/2}(A_{n,i}+B_{n,i}+C_{n,i})
\end{align*}
and $(f(t))_{t\in\left[0,1\right]}$ being step-wise defined via
\begin{equation*}
f(i\hn) = \begin{cases}
        \hphantom{-}0, & \text{for $i+\an <i^{*}$}\,,\\
        \delta\an^{-1/2}(i-i^{*}+\an+1), & \text{for $i^{*}-\an\leq i\leq i^{*}-1$}\,,\\
        \delta\an^{-1/2}(\an-i+i^{*}-1), & \text{for $i^{*}-1\leq i\leq i^{*}+\an-1$}\,,\\
        \hphantom{-}0, & \text{for $i>i^{*}+\an-1$}\,.
      \end{cases}
\end{equation*}
The function $f$ fulfills
\begin{itemize}
\item $f{\big|}_{\left[0,\theta\right]}$ is monotonically increasing and
\item $f{\big|}_{\left[\theta,1\right]}$ is monotonically decreasing.
\end{itemize}
We get the following representation of $\overline{V}^{\diamond}_{n,i}$:
\begin{align*}
\overline{V}^{\diamond}_{n,i}=\left|g(i\hn)-f(i\hn)\right|\,.
\end{align*}
The calculations above imply that 
\begin{align}\label{supgbound}
\sup_{t\in\left[0,\theta\right]}\left|g(t)\right|=\mathcal{O}_{\mathbb{P}}(\sqrt{\log((\ah)^{-1})})\,.
\end{align}
Furthermore, for $i^{*}-c\an\leq i\leq i^{*}+c\an$, with any $0<c<1$, it holds that
\begin{align*}
f(i\hn)>\left|g(i\hn)\right|>0\,, 
\end{align*}
with a probability tending to one as $n\rightarrow+\infty$. Therefore, 
\begin{align*}
\overline{V}_{n,i}^{\diamond}=f(i\hn)-g(i\hn)\,,
\end{align*}
for those $i$ with a probability tending to one as $n\rightarrow+\infty$. For a sequence $\gamma_{n}$, with $\gamma_{n}\in\left[0,\ah\right]$, it holds that
\begin{align*}
f((i^*-1)\hn-\gamma_{n})=\delta\an^{-1/2}(-\lfloor\gamma_{n}\hn^{-1}\rfloor+\an)
\end{align*}
and 
\begin{align*}
f((i^*-1)\hn)-f((i^*-1)\hn-\gamma_{n})=\lfloor\gamma_{n}\hn^{-1}\rfloor\delta\an^{-1/2}\,.
\end{align*}
When we set
\begin{align*}
\gamma_{n}=h_n\delta^{-1}\sqrt{\an\log(n)}\leq\ah\,,
\end{align*}
we derive with \eqref{supgbound} that almost surely for $n$ sufficiently large:
\begin{align*}
f((i^*-1)\hn)-f((i^*-1)\hn-\gamma_{n})\geq 2\,\sup_{t\in\left[0,\theta\right]}\left|g(t)\right|\,.
\end{align*}
Therefore, $f{\big|}_{\left[0,\theta\right]}$ satisfies the conditions of Lemma \ref{lem_argmax_bound}. This implies that
\begin{align*}
(i^{*}-1)\hn\geq \operatorname{argmax}_{i=\an,\ldots, \hn^{-1}-\an} \overline{V}_{n,i}^{\diamond}\hn\geq (i^{*}-1)\hn-\gamma_{n}\,.
\end{align*}
An analogous procedure applied to the function $f{\big|}_{\left[\theta,1\right]}$ yields that
\begin{align*}
(i^{*}-1)\hn\leq \operatorname{argmax}_{i=\an,\ldots, \hn^{-1}-\an} \overline{V}_{n,i}^{\diamond}\hn\leq (i^{*}-1)\hn+\gamma_{n}\,.
\end{align*}
Overall, this yields
\begin{align*}
\left|\hat{\theta}_{n}-(i^{*}-1)\hn\right|&=\mathcal{O}_{\mathbb{P}}(\gamma_{n})=\KLEINO_{\mathbb{P}}(1)\,,
\end{align*}
which completes the proof of Proposition \ref{propCP}.\hfill\qed
\addcontentsline{toc}{section}{References}
\bibliographystyle{chicago}
\bibliography{literatur}
\end{document}